\tikzset{
    vertex/.style={
        circle,fill=black,scale=0.5
        },
    blank/.style={
        circle,fill=white
        },
    }
\tikzset{
    vertex/.style={
        circle,fill=black,scale=0.5
        },
    blank/.style={
        circle,fill=white
        },
    }
\definecolor{rvwvcq}{rgb}{0,0,0}
\definecolor{Gray}{RGB}{160,160,160}
\newcommand{\R}{\mathcal{R}}
\newcommand{\N}{\mathbb{N}}
\newcommand{\A}{\mathcal{A}}
\newcommand{\wt}[1]{\widetilde{#1}}
\newcommand{\mc}[1]{\mathcal{#1}}
\newcommand{\diam}{\text{diam}}
\newcommand{\dist}{\text{dist}}
\newcommand{\SPK}{\text{SPK}}
\newcommand{\len}{\text{len}}
\DeclarePairedDelimiter{\ceil}{\lceil}{\rceil}
\newtheorem{theo}{Theorem}[section]
\newtheorem{lemma}[theo]{Lemma}
\newtheorem{prop}[theo]{Proposition}
\newtheorem{coro}[theo]{Corollary}
\theoremstyle{definition}
\newtheorem{obs}[theo]{Remark}
\title{Effect of graph operations on graph associahedra\footnote{Partially supported by Math AmSud 22-MATH-02, PIP CONICET 11220200101900CO, PIP CONICET 11220220100068CO, PICT-2020-03032, PICT 2020-00549, PICT 2020-04064, PID-UNR 80020210300068UR, PROICO 03-0723, PROIPRO 03-2923}}
\author[1,2]{Ana Gargantini\footnote{e-mail: agargantini@fcen.uncu.edu.ar}}
\author[3,4]{Adri\'an Pastine\footnote{e-mail: agpastine@gmail.com}}
\author[2,5]{Pablo Torres\footnote{e-mail: ptorres@fceia.unr.edu.ar}}
\date{ }
\affil[1]{\footnotesize Facultad de Ciencias Exactas y Naturales, Universidad Nacional de Cuyo}
\affil[2]{\footnotesize Consejo Nacional de Investigaciones Científicas y Técnicas, Argentina}
\affil[3]{\footnotesize Instituto de Matem\'atica Aplicada San Luis (UNSL-CONICET)}
\affil[4]{\footnotesize Departamento de Matem\'atica, Universidad Nacional de San Luis}
\affil[5]{\footnotesize Depto. de Matem\'atica, Facultad de Ciencias Exactas, Ing. y Agrimensura, Universidad Nacional de Rosario}
\begin{document}

\maketitle

\begin{abstract}
Given a graph $G$, we determine the structure of the rotation graph of a graph obtained by applying certain operations to $G$. Specifically, we consider the operations of adding a simplicial vertex, adding a true twin to a vertex, and the two closely related operations of deleting the set of edges from a subgraph induced by a set of true twins, and adding a false twin to a vertex. We describe how applying these operations to a graph affects the structure of its rotation graph. Furthermore, using this description, we study chromatic number, distance, and diameter in rotation graphs. In particular, we establish conditions under which the chromatic number of the rotation graphs is preserved. As an interesting consequence, we obtain that the chromatic number of the rotation graphs of threshold graphs (which includes complete split graphs and star graphs) and of complete bipartite graph is 3. We also provide a new lower bound for $\diam(\R(G-S))$ in terms of $\diam(\R(G))$, where $S$ is the set of edges of the subgraph of $G$ induced by a set of true twins. As a consequence, we improve the known lower bound for the diameter of the rotation graph of balanced complete bipartite graphs, allowing us to compute the exact value of $\diam(\mathcal{R}(K_{2,q}))$ for $q\in\{3,4,5,6,7,8\}$.
\end{abstract}

\section{Introduction}

Given a connected graph $G$, the graph associahedron $\A(G)$ of $G$ is the convex polytope whose face poset is isomorphic to the inclusion order of tubings on $G$ \cite{CDS-2006}. Several families of graph associahedra correspond to well-known polytopes. For instance, when $G$ is a complete graph, a path, a cycle or a star, the graph associahedron $\A(G)$ is the permutohedron, the classical associahedron, the cyclohedron, or the stellohedron, respectively.

Moreover, graph associahedra find relevance beyond generalization since they provide a tool for the study of combinatorial structures and their geometric representation, finding applications in various domains including optimization \cite{KLS-2010}, hierarchical visualization systems \cite{SSV-2017}, random structure generation \cite{DPR-2020}, and probabilistic methods \cite{MUWY-2018}. Additionally, they hold significance in algebra and physics as specific instances of generalized permutohedra \cite{AA-2023,PRW-2008}. One particularly compelling direction of study is the investigation on the graph $\R(G)$ determined by the $1$-skeleton of $\A(G)$, and its graph properties.  There is a number of equivalent formulations for this structure. Among them, in this article we approach the study of $\R(G)$ following \cite{CLP-2018}, considering its representation as the rotation graph of $G$, that is, as the graph whose vertex set is the set of search trees on $G$ and whose edges are determined by rotations on search trees.

Graph properties of $\R(G)$, such as hamiltonicity, connectivity and diameter have been extensively considered. For example, the diameter of permuthohedra, classical associahedra \cite{Pou-2014}, cyclohedra \cite{Pou-2017} and stellohedra \cite{MP-2015} were computed. Besides, in \cite{CPV-2022a} the authors give the exact diameter of the rotation graph of complete split graphs and of unbalanced complete bipartite graphs. In the same article, lower and upper bounds are provided for the diameter of the rotation graph of trivially perfect graphs and for the diameter of rotation graphs $\R(G)$ in terms of the order of $G$ and its tree-depth, treewidth, and pathwidth. Another challenging problem on rotation graphs is coloring. In \cite{FFHHWU-2009} the authors consider the chromatic number of the classical associahedron. They prove that it is bounded above by $\ceil {\frac{n}{2}}$ and that it is $O(n/ \log n)$ if $G$ is the path on $n-2$ vertices. Moreover, they conjecture that it is $O(\log n)$, which was proved in \cite{ARSW-2018}. The same paper mentions that the best known lower bound is $\chi(\R(P_{10}))\geq4$. Apart from these results for the classical associahedra and the fact that permutohedra are bipartite, computing the chromatic number of $\R(G)$ for the general case remains an open problem.

An interesting tool to study the structure of rotation graphs is based on the reflection on $\R(G)$ of an operation on $G$. For instance, in \cite{CDS-2011}, pseudograph associahedra are introduced, and consider deformations of pseudograph associahedra as their underlying graphs are altered by edge contractions and edge deletions. Manneville and Pilaud \cite{MP-2015} proved that if $G$ has at least two edges, its rotation graph is Hamiltonian. However, their proof does not derive in an efficient algorithm for computing a Hamiltonian cycle. Note that $|V(\R(G))|$ is in general exponential in $|V(G)|$. In 2022, Cardinal et al. \cite{CMM-2022} obtained an efficient algorithm to construct Hamiltonian paths in rotation graphs of chordal graphs by applying a combinatorial generation framework proposed in \cite{HHMW-2022}. For this graph class, the authors studied search trees of $G$ and the relationship with a perfect elimination ordering of the vertices of $G$. 

In this regard, we consider the effect of three different graph operations on $G$ on the structure of $\R(G)$. More specifically, given a subset $K\in V(G)$ that induces a clique, we consider the graph $G_K$ obtained by adding a simplicial vertex whose neighbourhood is $K$, and given $v\in V(G)$ we consider the graphs $G_v$ and $\wt G_v$ obtained from $G$ by adding a true twin of $v$, and adding a false twin of $v$, respectively. It is known that these operations generate interesting graph classes, such as  threshold graphs, cographs, Ptolemaic graphs, and distance-hereditary graphs \cite{DHgraphs}. The fact that $\wt G_v$ can be obtained from $G_v$ by deleting and edge connecting true twins, serves as a motivation to consider the operation on a graph $G$ with a set of true twins $W\subseteq V(G)$, consisting in deleting the set $S$ of edges connecting vertices from $W$. 

In Section \ref{section_operations}, we analyze the structure of the rotation graphs $\R(G_K)$, $\R(G_v)$, $\R(G-S)$, and  $\R(\wt G_v)$, characterizing the vertices and edges of these rotation graphs in terms of those in $\R(G)$. In order to do this, for $\R(G_K)$ and $\R(G_v)$, we determine all search trees on $G_K$ and $G_v$, respectively, through vertex insertions on search trees on $G$. We also establish a relation between adjacencies in $\R(G_K)$ and $\R(G_v)$ with those in $\R(G)$. Furthermore, we prove that $\R(G-S)$ is a graph quotient of $\R(G)$, which extends to the particular case of $\R(\wt G_v)$.

In Section \ref{section_chromatic_number}, we present results concerning the chromatic number of rotation graphs for graphs obtained by applying these operations to non-complete graphs. Specifically, we prove that if $K$ is a set of universal vertices in $G$, then $\chi(\R(G_K)) = \chi(\R(G))$. Additionally, if $v$ is a universal vertex in $G$, then $\chi(\R(G_v)) = \chi(\R(G))$. As a consequence, we determine that the chromatic number of $\R(G)$ for any non-complete threshold graph $G$ is 3, thereby establishing the chromatic number of stellohedra and rotation graphs of split complete graphs as particular instances of threshold graphs. We also prove that for certain graphs $G$ and $v\in V(G)$, $\chi(\R(\wt G_v)=\chi(\R(G))$, obtaining as a corollary that the chromatic number of rotation graphs of complete bipartite graphs is also 3.

In Section \ref{section_dist_and_diam}, we present results on distances between specific pairs of vertices in $G_v$. We also establish a relationship between distances in $\R(G)$ and $\R(G - S)$, where $S$ is the set of edges connecting vertices from a set of true twins. Using this relation, we provide a lower bound for $\diam(\R(G - S))$ in terms of $\diam(\R(G))$. For balanced complete bipartite graphs, this bound improves upon the one given in \cite{CPV-2022a}, although the exact value of the diameter remains unknown. We apply these results to compute $\diam(\R(K_{2,q}))$ for $q \in \{3,4,5,6,7,8\}$.

\subsection{Preliminaries}

\subsubsection*{Basic notions and notation}

In this manuscript, all graphs are simple and undirected, and have at least one vertex. Given a graph $G$, $V(G)$ and $E(G)$ are its vertex set and edge set, respectively. We denote an edge $\{u,v\}$ by $uv$ and the distance in $G$ between vertices $u$ and $v$ by $\dist_G(u,v)$.

Recall that the open neighbourhood of $v\in V(G)$ is $N_G(v)=\{u\in V(G)\mid uv\in E(G)\}$ and the closed neighbourhood of $v$ is $N_G[v]=N_G(v)\cup \{v\}$. If the graph is clear from the context, we drop the subscript $G$. Two vertices $u,v\in V(G)$ are false twins if $N_G(v)=N_G(u)$ and they are true twins if $N_G[u]=N_G[v]$. If $N_G(v)$ is a clique of $G$, then $v$ is said to be simplicial in $G$. We say that $v$ is a pendant vertex in $G$ if $|N_G(v)|=1$. Notice that a pendant vertex is simplicial. If there is a subset $W\subseteq V(G)$ such that for every $u,v\in W$, $u$ and $v$ are true twins, we say that $W$ is a set of true twins in $G$.

We use some standard terminology related to rooted trees and introduce notation. Let $T$ be a rooted tree and $v\in V(T)$. We denote by $r_T$ the root of $T$ and $d_{T,v}=\dist_T(r_T,v)$. 
We say that $v$ is in level $i$ in $T$ if $d_{T,v}=i$, and we denote $\mc L_i(T)$ the set of vertices of level $i$ in $T$. A branch in $T$ is a path from $r_T$ to a leaf of $T$. We denote $T|v$ the subtree of $T$ rooted in $v$. If $u$ is the parent of $v$ in $T$, we say that $T|v$ is a subtree of $u$.
If $u, v$ are two vertices such that $u$ belongs to the path in $T$ from the root to $v$, then $u$ is an ancestor of $v$ and $v$ is a descendant of $u$. 

Given two positive integers $p$ and $q$, the complete split graph $\SPK_{p,q}$ is the graph whose vertex set is $P\cup Q$ where $P=\{x_1,\ldots,x_p\}, Q=\{y_1,\ldots,y_q\}$ are disjoint sets, such that $P$ is a clique and $Q$ is an independent set and where $\{x_i,y_j\mid 1\leq i \leq p, 1\leq j\leq q\}\subseteq E(\SPK{p,q})$.


\subsubsection*{Search trees, rotations and rotation graphs}

A search tree $T$ on a connected graph $G$ is a rooted tree with vertex set $V(G)$ defined recursively as follows. The root of $T$ is a vertex $r\in V(G)$ and the children of $r$ are the roots of search trees on each connected component of $G-r$ \cite{CPV-2022a}.

Notice that a search tree $T$ on $G$ can be obtained by succesively deleting the vertices of $G$ following an elimination order given by a permutation of $V(G)$. For this reason the search trees on $G$ are also known as elimination trees. Observe that different eliminations orders may result in the same elimination tree. Figure \ref{fig:exampleSpecial} shows three search trees on the split complete graph $\SPK_{3,3}$. Note that $R$ and $S$ are determined by unique elimination orders $y_1,y_2,y_3,x_1,x_2,x_3$ and $y_1,y_2,y_3,x_1,x_3,x_2$, respectively. On the other hand, $T$ is determined by two different elimination orders, $x_1,y_1,x_2,x_3,y_2,y_3$ and $x_1,y_1,x_2,x_3,y_3,y_2$.

\begin{figure}[ht]
    \centering
    \begin{tikzpicture}
\foreach \i/\u in {1/below,2/left,3/above}{
\node[vertex,label=\u:{\small $x_{\i}$}] at (0,\i) (x\i) {};
}
\foreach \i/\u in {1/below,2/right,3/above}{
\node[vertex,label=\u:{\small $y_{\i}$}] at (1.3,\i) (y\i) {};
}
\foreach \i in {1,2,3}{
\draw (x1)--(y\i);
\draw (x2)--(y\i);
\draw (x3)--(y\i);
}
\draw (x1)--(x2)--(x3);
\draw[-] (x1) to [bend left=90] (x3); 
\node at (0.6,0) (G) {$\SPK_{3,3}$};
\end{tikzpicture}
\qquad
\begin{tikzpicture}
\node[vertex,label=right:{\small $y_1$}] at (0,3.5) (y1) {};
\node[vertex,label=right:{\small $y_2$}] at (0,3) (y2) {};
\node[vertex,label=right:{\small $y_3$}] at (0,2.5) (y3) {};
\node[vertex,label=right:{\small $x_1$}] at (0,2) (x1) {};
\node[vertex,label=right:{\small $x_2$}] at (0,1.5) (x2) {};
\node[vertex,label=right:{\small $x_3$}] at (0,1) (x3) {};
\draw (y1)--(y2)--(y3)--(x1)--(x2)--(x3);
\node at (0,0.3) (R) {$R$};
\end{tikzpicture}
\quad
\begin{tikzpicture}
\node[vertex,label=right:{\small $y_1$}] at (0,3.5) (y1) {};
\node[vertex,label=right:{\small $y_2$}] at (0,3) (y2) {};
\node[vertex,label=right:{\small $y_3$}] at (0,2.5) (y3) {};
\node[vertex,label=right:{\small $x_1$}] at (0,2) (x1) {};
\node[vertex,label=right:{\small $x_3$}] at (0,1.5) (x3) {};
\node[vertex,label=right:{\small $x_2$}] at (0,1) (x2) {};
\draw (y1)--(y2)--(y3)--(x1)--(x3)--(x2);
\node at (0,0.3) (S) {$S$};
\end{tikzpicture}
\quad
\begin{tikzpicture}
\node[vertex,label=right:{\small $x_1$}] at (0,3.5) (x1) {};
\node[vertex,label=right:{\small $y_1$}] at (0,3) (y1) {};
\node[vertex,label=right:{\small $x_2$}] at (0,2.5) (y2) {};
\node[vertex,label=right:{\small $x_3$}] at (0,2) (y3) {};
\node[vertex,label=right:{\small $y_2$}] at (-0.6,1) (x2) {};
\node[vertex,label=right:{\small $y_3$}] at (0.6,1) (x3) {};
\draw (x1)--(y1)--(y2)--(y3)--(x2);
\draw (y3)--(x3);
\node at (0,0.3) (T) {$T$};
\end{tikzpicture}
    \caption{Three search trees on $\SPK_{3,3}$.}
    \label{fig:exampleSpecial}
\end{figure}


In the subsequent discussions, we make use of the following remarks regarding search trees on graphs.

\begin{obs}
\label{obs_search_subtree}
Let $G$ be a connected graph and $T$ be a rooted tree with vertices $V(G)$. The following are equivalent.
\begin{enumerate}[(a)]
    \item $T$ is a search tree on $G$.
    \item For every $v\in V(T)$ such that $v\neq r_T$, $V(T|v)$ induces a connected component in $G-\bigcup\limits_{j=0}^{d_{T,v}-1}\mathcal L_j(T)$.
\end{enumerate}
\end{obs}

\begin{obs}
\label{obs_uv_branch}    
    A pair of vertices $u,v$ are adjacent in $G$ if and only if for every search tree $T$ on $G$, $u$ and $v$ are in the same branch in $T$. 


\end{obs}

Following \cite{CLP-2018}, we recall the definition of rotation in a search tree. Let $G$ be a connected graph and $T$ a search tree on $G$. Let $u, v\in V(T)$ such that $v$ is a child of $u$ in $T$ and $p$ be the parent of $u$ in $T$. The rotation of $u$ and $v$ (or $uv$-rotation) in $T$ produces a search tree $T'$ on $G$ where
\begin{itemize}
    \item $u$ is a child of $v$ and $v$ is a child of $p$,
    \item every subtree $S$ of $u$ in $T$ such that $S\neq T|v$, is a subtree of $u$ in $T'$,
    \item for every subtree $S$ of $v$ in $T$, if $u$ is adjacent to a vertex of $S$ in $G$, then $S$ is a subtree of $u$ in $T'$; otherwise, $S$ is a subtree of $v$ in $T'$.
\end{itemize}

The rotation graph $\R(G)$ of $G$ is the graph whose vertex set is the set of all the search trees on $G$ and where two search trees are adjacent if and only if they differ by one rotation.  By definition of $\R(G)$, when referring to a search tree $T$ on $G$, we often write $T\in V(\R(G))$. A sequence of rotations that transform a search tree $T$ on $G$ into another $T'$ determines a $TT'$-walk in $\R(G)$, that is, a sequence of vertices $T=T_0,T_1,\ldots,T_k=T'$ in $\R(G)$ such that two consecutive are adjacent. Similarly, a $TT'$-path in $\R(G)$ is a $TT'$-walk with distinct vertices. The rotation distance between search trees $T,T'$ on $G$ is the minimum number of rotations required to transform $T$ into $T'$, that is, the distance $\dist_{\R(G)}(T,T')$ in $\R(G)$ between $T$ and $T'$.

Figure \ref{fig:GraphAssociahedra} shows the rotation graph of the complete graph $K_4$ ($1$-skeleton of the permutohedron on $4$ vertices). In this figure we denote $a_1a_2a_3a_4$ to the search tree that is a path with consecutive vertices $a_1,a_2,a_3,a_4$ rooted at $a_1$.

\begin{figure}
\centering
    \begin{subfigure}{0.2\textwidth}
    \centering
        \begin{tikzpicture}
        \node[vertex,label=left:{\small $1$}] at (0,1) (1) {};
        \node[vertex,label=right:{\small $3$}] at (1,1) (3) {};
        \node[vertex,label=left:{\small $2$}] at (0,0) (2) {};
        \node[vertex,label=right:{\small $4$}] at (1,0) (4) {};
        \draw (1)--(3)--(4)--(1)--(2)--(3);
        \draw (2)--(4);
        \end{tikzpicture}
        \caption{$K_4$}
    \end{subfigure}
\begin{subfigure}{0.4\textwidth}
\centering
\begin{tikzpicture}[line cap=round,line join=round,>=triangle 45,x=0.6cm,y=0.6cm]
\draw [line width=1pt,color=Gray] (0.25,-3.14)-- (-2.19,-3.62);
\draw [line width=1pt,color=Gray] (2.43,0.58)-- (3.65,0.3);
\draw [line width=1pt,color=Gray] (0.81,2.22)-- (0.21,3.72);
\draw [line width=1pt,color=Gray] (-3.17,0.3)-- (-0.79,0.58);
\draw [line width=1pt,color=Gray] (-3.17,0.3)-- (-3.99,-1.82);
\draw [line width=1pt] (-4.01,1.76)-- (-2.15,3.56);
\draw [line width=1pt] (-2.15,3.56)-- (-1.11,3.38);
\draw [line width=1pt] (-1.11,3.38)-- (1.41,3.56);
\draw [line width=1pt] (1.41,3.56)-- (0.21,3.72);
\draw [line width=1pt] (0.21,3.72)-- (-2.15,3.56);
\draw [line width=1pt] (-1.11,3.38)-- (-1.85,1.2);
\draw [line width=1pt] (-1.85,1.2)-- (-3.87,-0.8);
\draw [line width=1pt] (-3.87,-0.8)-- (-1.87,-2.8);
\draw [line width=1pt] (-1.87,-2.8)-- (0.15,-0.8);
\draw [line width=1pt] (0.15,-0.8)-- (-1.85,1.2);
\draw [line width=1pt,color=Gray] (0.81,2.22)-- (-0.79,0.58);
\draw [line width=1pt,color=Gray] (-0.79,0.58)-- (0.81,-1.02);
\draw [line width=1pt,color=Gray] (0.81,-1.02)-- (2.43,0.58);
\draw [line width=1pt,color=Gray] (2.43,0.58)-- (0.81,2.22);
\draw [line width=1pt] (3.21,1.74)-- (2.69,-0.38);
\draw [line width=1pt] (3.21,1.74)-- (3.65,0.3);
\draw [line width=1pt] (3.65,0.3)-- (3.21,-1.78);
\draw [line width=1pt] (3.21,-1.78)-- (2.69,-0.38);
\draw [line width=1pt,color=Gray] (0.81,-1.02)-- (0.25,-3.14);
\draw [line width=1pt,color=Gray] (0.25,-3.14)-- (1.39,-3.62);
\draw [line width=1pt] (1.39,-3.62)-- (3.21,-1.78);
\draw [line width=1pt] (0.15,-0.8)-- (2.69,-0.38);
\draw [line width=1pt] (-1.87,-2.8)-- (-1.13,-4.14);
\draw [line width=1pt] (-1.13,-4.14)-- (1.39,-3.62);
\draw [line width=1pt] (-2.19,-3.62)-- (-1.13,-4.14);
\draw [line width=1pt] (-2.19,-3.62)-- (-3.99,-1.82);
\draw [line width=1.1pt] (-3.99,-1.82)-- (-4.89,-0.36);
\draw [line width=1pt] (-4.89,-0.36)-- (-3.87,-0.8);
\draw [line width=1pt] (-4.89,-0.36)-- (-4.01,1.76);
\draw [line width=1.1pt] (-3.17,0.3)-- (-4.01,1.76);
\draw [line width=1pt] (1.41,3.56)-- (3.21,1.74);
\begin{scriptsize}
\draw [fill=rvwvcq] (-1.11,3.38) circle (2.5pt);
\draw[color=rvwvcq] (-0.98,3.9) node {3214};
\draw [fill=rvwvcq] (-2.15,3.56) circle (2.5pt);
\draw[color=rvwvcq] (-2.7,3.99) node{2314};
\draw [fill=rvwvcq] (0.21,3.72) circle (2.5pt);
\draw[color=rvwvcq] (0.37,4.15) node{2341};
\draw [fill=rvwvcq] (1.41,3.56) circle (2.5pt);
\draw[color=rvwvcq] (2,3.9) node{3241};
\draw [fill=rvwvcq] (3.21,1.74) circle (2.5pt);
\draw[color=rvwvcq] (3.9,2) node{3421};
\draw [fill=rvwvcq] (2.69,-0.38) circle (2.5pt);
\draw (3.5,-0.38) node[rectangle,minimum height=0.3cm,fill=white, fill opacity=0.8] { };
\draw[color=rvwvcq] (3.38,-0.38) node {3412};
\draw [fill=rvwvcq] (3.21,-1.78) circle (2.5pt);
\draw[color=rvwvcq] (3.89,-1.88) node{4312};
\draw [fill=rvwvcq] (3.65,0.3) circle (2.5pt);
\draw[color=rvwvcq] (4.4,0.3) node{4321};
\draw [fill=rvwvcq] (2.43,0.58) circle (2.5pt);
\draw[color=rvwvcq] (1.7,0.58) node{4231};
\draw [fill=rvwvcq] (0.81,2.22) circle (2.5pt);
\draw[color=rvwvcq] (0.1,2.22) node{2431};
\draw [fill=rvwvcq] (-0.79,0.58) circle (2.5pt);
\draw[color=rvwvcq] (-0.05,0.58) node{2413};
\draw [fill=rvwvcq] (0.81,-1.02) circle (2.5pt);
\draw[color=rvwvcq] (1.4,-1.2) node{4213};
\draw [fill=rvwvcq] (0.25,-3.14) circle (2.5pt);
\draw[color=rvwvcq] (0.95,-3) node{4123};
\draw [fill=rvwvcq] (1.39,-3.62) circle (2.5pt);
\draw[color=rvwvcq] (1.55,-4.05) node{4132};
\draw [fill=rvwvcq] (-1.13,-4.14) circle (2.5pt);
\draw[color=rvwvcq] (-0.97,-4.58) node{1432};
\draw [fill=rvwvcq] (-2.19,-3.62) circle (2.5pt);
\draw[color=rvwvcq] (-2.3,-4.05) node{1423};
\draw [fill=rvwvcq] (-1.87,-2.8) circle (2.5pt);
\draw[color=rvwvcq] (-1.18,-2.83) node {1342};
\draw [fill=rvwvcq] (0.15,-0.8) circle (2.5pt);
\draw[color=rvwvcq] (-0.63,-0.8) node {3142};
\draw [fill=rvwvcq] (-1.85,1.2) circle (2.5pt);
\draw[color=rvwvcq] (-2.4,1.6) node {3124};
\draw [fill=rvwvcq] (-3.87,-0.8) circle (2.5pt);
\draw[color=rvwvcq] (-3.11,-0.8) node {1324};
\draw [fill=rvwvcq] (-4.89,-0.36) circle (2.5pt);
\draw[color=rvwvcq] (-5.65,-0.3) node {1234};
\draw [fill=rvwvcq] (-4.01,1.76) circle (2.5pt);
\draw[color=rvwvcq] (-4.55,2.1) node {2134};
\draw [fill=rvwvcq] (-3.17,0.3) circle (2.5pt);
\draw[color=rvwvcq] (-3.86,0.23) node {2143};
\draw [fill=rvwvcq] (-3.99,-1.82) circle (2.5pt);
\draw[color=rvwvcq] (-4.35,-2.25) node {1243};
\end{scriptsize}
\end{tikzpicture}
\caption{$\R(K_4)$}
\end{subfigure}
\caption{A graph and its corresponding rotation graph.}
\label{fig:GraphAssociahedra}
\end{figure}

\subsubsection*{Insertion and elimination of a vertex from a rooted tree}

We consider the insertion and elimination operations as defined in \cite{CMM-2021} for search trees on chordal graphs. Here, we extend the discussion of these operations and their associated results to a more general setting.

Let $T$ be a rooted tree with more than one vertex, $v\in V(T)$, and  $i\in\{0,\ldots,d_{T,v}\}$. Assuming $x$ is not a vertex of $T$, we describe the operation of insertion of the vertex $x$ along the path between $r_T$ and $v$. Formally, we denote by $T(i,x,v)$ the rooted tree with vertex set $V(T)\cup\{x\}$ defined as follows. 
\begin{itemize}
    \item $T(0,x,v)$ is the rooted tree with root $x$ and such that $T$ is the only subtree of $x$.
    \item $T(d_{T,v}+1, x,v)$ the rooted tree with root $r_T$ obtained from $T$ by adding $x$ as a new child of $v$.
    \item If $d_{T,v}\geq 1$, let $A$ be the path in $T$ from $r_T$ to $v$ and suppose $A$ has vertices $r_T=a_0,\ldots,a_{d_{T,v}}=v$. For $i\in\{1,\ldots,d_{T,v}\}$, $T(i,x,v)$ is the rooted tree with root $r_T$ obtained from $T$ by subdividing the edge $a_{i-1}a_i$ and labelling with $x$ the vertex added in the subdivision.
\end{itemize}


\begin{figure}
    \centering
    \begin{tikzpicture}
        \node[vertex,label=right:{\small $a=a_0$}] at (0,2) (u) {};
        \node[vertex,label=right:{\small $b=a_1$}] at (0,1) (x) {};
        \node[vertex,label=right:{\small $d=a_2$}] at (0.5,0) (v) {};
        \node[vertex,label=left:{\small $c$}] at (-0.5,0) (w) {};
        \node at (0,-0.5) (T) {$T$};
        \draw (u)--(x)--(v);
        \draw (x)--(w);
    \end{tikzpicture}
    \qquad
    \begin{tikzpicture}
        \node[vertex,label=right:{\small $x$}] at (0,3) (a) {};
        \node[vertex,label=right:{\small $a$}] at (0,2) (u) {};
        \node[vertex,label=right:{\small $b$}] at (0,1) (x) {};
        \node[vertex,label=right:{\small $d$}] at (0.5,0) (v) {};
        \node[vertex,label=left:{\small $c$}] at (-0.5,0) (w) {};
        \node at (0,-0.5) (T0a) {$T(0,x,d)$};
        \draw (a)--(u)--(x)--(v);
        \draw (x)--(w);
    \end{tikzpicture}
    \qquad
        \begin{tikzpicture}
        \node[vertex,label=right:{\small $a$}] at (0,3) (a) {};
        \node[vertex,label=right:{\small $x$}] at (0,2) (u) {};
        \node[vertex,label=right:{\small $b$}] at (0,1) (x) {};
        \node[vertex,label=right:{\small $d$}] at (0.5,0) (v) {};
        \node[vertex,label=left:{\small $c$}] at (-0.5,0) (w) {};
        \node at (0,-0.5) (T0a) {$T(1,x,d)$};
        \draw (a)--(u)--(x)--(v);
        \draw (x)--(w);
    \end{tikzpicture}
    \qquad
    \begin{tikzpicture}
        \node[vertex,label=right:{\small $a$}] at (0,2) (u) {};
        \node[vertex,label=right:{\small $b$}] at (0,1) (x) {};
        \node[vertex,label=right:{\small $x$}] at (0.5,0) (a) {};
        \node[vertex,label=left:{\small $c$}] at (-0.5,0) (w) {};
        \node[vertex,label=right:{\small $d$}] at (0.5,-1) (v) {};
        \node at (0,-1.5) (T) {$T(2,x,d)$};
        \draw (u)--(x)--(a)--(v);
        \draw (x)--(w);
    \end{tikzpicture}
    \qquad
    \begin{tikzpicture}
        \node[vertex,label=right:{\small $a$}] at (0,2) (u) {};
        \node[vertex,label=right:{\small $b$}] at (0,1) (x) {};
        \node[vertex,label=right:{\small $d$}] at (0.5,0) (a) {};
        \node[vertex,label=left:{\small $c$}] at (-0.5,0) (w) {};
        \node[vertex,label=right:{\small $x$}] at (0.5,-1) (v) {};
        \node at (0,-1.5) (T) {$T(3,x,d)$};
        \draw (u)--(x)--(a)--(v);
        \draw (x)--(w);
    \end{tikzpicture}
    \caption{Insertions of a vertex in the rooted tree $T$}
    \label{fig:insertions}
    \end{figure}

Figure \ref{fig:insertions} shows a rooted tree $T$ and $T(i,x,d)$ for $i\in\{0,1,2,4\}$.

We also consider the operation of elimination of a vertex from $T$. If $u\in V(T)$ has at most one child in $T$, we define the rooted tree $p(T,u)$ as follows.
\begin{itemize}
    \item If $u$ is the root of $T$ and its child is $c$, $p(T,u)$ is obtained from $T$ by deleting $u$ and making $c$ the root. 
    \item If $u$ is not the root of $T$, $u$ has exactly one child $c$, and $b$ is the parent of $u$ in $T$, then $p(T,u)$ is obtained by deleting $u$ from $T$, adding the edge $bc$ and keeping $r_T$ as the root. 
    \item If $u$ is a leaf of $T$ then $p(T,u)$ is obtained from $T$ by deleting $u$ and keeping $r_T$ as the root. 
\end{itemize}

    \begin{figure}
    \centering
    \begin{tikzpicture}
        \node[vertex,label=right:{\small $1$}] at (0,3) (a) {};
        \node[vertex,label=right:{\small $2$}] at (0,2) (u) {};
        \node[vertex,label=right:{\small $3$}] at (0,1) (x) {};
        \node[vertex,label=right:{\small $5$}] at (0.5,0) (v) {};
        \node[vertex,label=left:{\small $4$}] at (-0.5,0) (w) {};
        \node at (0,-0.5) (T0a) {$T'$};
        \draw (a)--(u)--(x)--(v);
        \draw (x)--(w);
    \end{tikzpicture}
    \qquad
    \begin{tikzpicture}
        \node[vertex,label=right:{\small $2$}] at (0,2) (u) {};
        \node[vertex,label=right:{\small $3$}] at (0,1) (x) {};
        \node[vertex,label=right:{\small $5$}] at (0.5,0) (v) {};
        \node[vertex,label=left:{\small $4$}] at (-0.5,0) (w) {};
        \node at (0,-0.5) (T0a) {$p(T',1)$};
        \draw (u)--(x)--(v);
        \draw (x)--(w);
    \end{tikzpicture}
    \qquad
        \begin{tikzpicture}
        \node[vertex,label=right:{\small $1$}] at (0,3) (a) {};
        \node[vertex,label=right:{\small $3$}] at (0,1) (x) {};
        \node[vertex,label=right:{\small $5$}] at (0.5,0) (v) {};
        \node[vertex,label=left:{\small $4$}] at (-0.5,0) (w) {};
        \node at (0,-0.5) (T0a) {$p(T',2)$};
        \draw (a)--(x)--(v);
        \draw (x)--(w);
    \end{tikzpicture}
    \qquad
    \begin{tikzpicture}
        \node[vertex,label=right:{\small $1$}] at (0,3) (a) {};
        \node[vertex,label=right:{\small $2$}] at (0,2) (u) {};
        \node[vertex,label=right:{\small $3$}] at (0,1) (x) {};
        \node[vertex,label=right:{\small $5$}] at (0.5,0) (v) {};
        \node at (0,-0.5) (T0a) {$p(T',4)$};
        \draw (a)--(u)--(x)--(v);
    \end{tikzpicture}    
    \qquad    
    \caption{Eliminations of a vertex from the rooted tree $T'$.}
    \label{fig:eliminations}
    \end{figure}

Figure \ref{fig:eliminations} a rooted tree $T'$ and $p(T',u)$ for $u\in\{1,2,4\}$.

\begin{obs}
\label{obs_ins_and_prun_are_inverses}
Notice that insertion and elimination of vertices are mutually inverse operations. More precisely, if $T$ is a rooted tree, $v\in V(T)$, and $i\in\{0,\dots,d_{T,v}+1\}$ then $p(T(i,x,v),x)=T$. On the other hand, if $T'$ is a rooted tree and $x\in V(T')$ has at most one child in $T'$, then $p(T',x)(d_{T',x},x,v)=T'$ if $v$ is a child of $x$, and $p(T',x)(d_{T',x}+1,x,p)=T'$ if $x$ has no children and $p$ is the parent of $x$ in $T'$.
\end{obs}

\section{Operations in graphs and rotation graphs}
\label{section_operations}

In this section, we consider operations on graphs and analyze their effect on the structure of the corresponding rotation graphs. Specifically, for a graph $G'$ obtained from a graph $G$ by applying one of these operations, we aim to study vertices and edges in the rotation graph $\mathcal{R}(G')$ in terms of those in $\mathcal{R}(G)$. We begin by determining conditions under which the rooted trees obtained by insertion and elimination of vertices also function as search trees. The following two lemmas are analogous to Lemmas 19 and 20 presented in \cite{CMM-2021}, but stated for a more general context.

Recall that a $v$ is a cut vertex of a graph $G$ if $G-v$ has more connected components than $G$.

\begin{lemma}
\label{obs_pTx_is_ST}
Let $G'$ be a connected graph and $x,v\in V(G')$ such that $N_{G'}[x]\subseteq N_{G'}[v]$. If $T'$ is a search tree on $G'$ such that $v$ is a descendant of $x$, then $p(T',x)$ is a search tree on $G'-x$.
\end{lemma}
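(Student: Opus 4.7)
The plan has two parts: first I would check that the operation $p(T',x)$ is well-defined by showing $x$ has at most one child in $T'$, and then I would prove $p(T',x)$ is a search tree on $G'-x$ by induction on $|V(G')|$. As a preliminary, since $v$ is a descendant of $x$ with $v\neq x$ (the only substantive case), the hypothesis $N_{G'}[x]\subseteq N_{G'}[v]$ together with $x\in N_{G'}[x]$ yields $xv\in E(G')$; as every other neighbor of $x$ then lies in $N_{G'}[v]$, any path in $G'$ through $x$ can be rerouted via $v$, proving that $G'-x$ is connected.

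To show $x$ has at most one child in $T'$, I would argue by contradiction. Suppose $x$ has two children $c_1,c_2$, with $v$ descending from $c_1$ without loss of generality. By Remark \ref{obs_search_subtree}, $V(T'|c_1)$ and $V(T'|c_2)$ are distinct connected components of the graph obtained from $G'$ by deleting all vertices of level at most $d_{T',x}$ in $T'$, so no edge of $G'$ joins $V(T'|c_1)$ with $V(T'|c_2)$. On the other hand, since $V(T'|x)$ is connected in the graph obtained by deleting only the strict ancestors of $x$, the vertex $x$ must be adjacent in $G'$ to some $w\in V(T'|c_2)$. Then $w\in N_{G'}(x)\subseteq N_{G'}[v]$, and $w\neq v$ since they lie in disjoint subtrees, so $wv\in E(G')$, contradicting the separation.

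For the induction, I would let $r=r_{T'}$. If $r=x$, then $p(T',x)=T'|c$, where $c$ is the unique child of $x$, and $T'|c$ is a search tree on the (connected) component $G'-x$ directly from the recursive definition of search tree. If $r\neq x$, decompose $G'-r$ into components $H_1,\ldots,H_t$ with corresponding children $r_1,\ldots,r_t$ of $r$ in $T'$. Both $x$ and $v$ lie in the same component, say $H_i$, and the inclusion $N_{H_i}[x]\subseteq N_{H_i}[v]$ persists (intersecting both sides with $V(H_i)$); applying the inductive hypothesis to $T'|r_i$ yields that $p(T'|r_i,x)$ is a search tree on $H_i-x$. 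Since $p(T',x)$ is obtained from $T'$ by replacing only the subtree rooted at $r_i$ with $p(T'|r_i,x)$, and the components of $(G'-x)-r$ are exactly $H_i-x$ together with the unchanged $H_j$ for $j\neq i$, the recursive definition then gives $p(T',x)$ as a search tree on $G'-x$.

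The main obstacle is the uniqueness-of-child step, because a child in a search tree need not be adjacent to its parent in $G'$; the right move is to locate a specific adjacent vertex $w$ inside the offending subtree and then extract the contradiction through $v$. Once this is in place, the recursive structure of search trees makes the induction straightforward.
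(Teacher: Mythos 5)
Your proof is correct. The well-definedness step (that $x$ has at most one child in $T'$) rests on exactly the same observation as the paper's: the presence of $v$, with $N_{G'}[x]\subseteq N_{G'}[v]$, in every induced subgraph where $x$ still lives prevents $x$ from separating its subtree; the paper phrases this as ``$x$ is not a cut vertex of $G'-\bigcup_{j=0}^{k}\mathcal L_j(T')$,'' while you extract an explicit contradiction by locating a neighbour $w$ of $x$ in a second child's subtree and rerouting through $v$ -- same idea, spelled out in more detail. Where you genuinely diverge is in verifying that $p(T',x)$ is a search tree: the paper checks condition (b) of Remark \ref{obs_search_subtree} directly for every subtree of $T=p(T',x)$ in one pass (each $V(T|u)$ equals $V(T'|u)$ or $V(T'|u)-\{x\}$, and remains a connected component because $x$ is not a cut vertex there), whereas you run an induction on $|V(G')|$ against the recursive definition, peeling off the root until $x$ becomes the root of the current subtree. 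Your route is more elementary in that it never invokes the level-set characterization for the main verification, at the cost of being longer and of needing the auxiliary facts that $xv\in E(G')$ and that $H_i-x$ stays connected at each stage (which you do supply via the rerouting argument). The paper's version buys brevity by doing all levels simultaneously. One small point worth making explicit either way: the statement is only sensible for $v\neq x$, which you correctly flag and which the paper leaves implicit.
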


\begin{proof}
    
    
    First, observe that for every $k\in\{0,\ldots,d_{T',x}-1\}$, $x,v\in G'-\bigcup\limits_{j=0}^k \mc L_j(T')$, and since $N_{G'}[x]\subseteq N_{G'}[v]$, $x$ is not a cut vertex of that subgraph. 
    
    This implies, on the one hand, that  $x$ has at most one child in $T'$ and therefore $p(T',x)$ is well defined. On the other hand, let $T=p(T',x)$ and let us see that $T$ is a search tree on $G'-x$. If $u$ is an ancestor of $x$ in $T'$, then $V(T|u)=V(T'|u)-\{x\}$. In addition, if $u$ is not an ancestor of $x$, then $V(T|u)=V(T'|u)=V(T'|u)-\{x\}$. In both cases, by the previous observation, $V(T|u)$ induces a connected component of $(G'-x)-\bigcup\limits_{j=0}^{d_{T,u}-1}\mc L_j(T')$. Therefore, by Remark \ref{obs_search_subtree}, $T$ is a search tree on $G'-x$.
\end{proof}

\begin{lemma}
\label{obs_Tix_are_ST_and_path}
Let $G'$ be a connected graph and $x,v\in V(G')$ such that $N_{G'}[x]\subseteq N_{G'}[v]$. Let $T$ be a search tree on $G=G'-x$ and $i\in\{0,\ldots,d_{T,v}\}$. Then $T(i,x,v)$ is a search tree on $G'$. In addition, the vertices $T(i,x,v)$ with $i\in\{0,\ldots,d_{T,v}\}$ are on a path in $\R(G')$.
\end{lemma}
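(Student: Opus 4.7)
The plan is to handle both claims together by induction on $i$.

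For the base case $i=0$, the rooted tree $T(0,x,v)$ has $x$ as its root with $T$ as its only subtree. To verify that $T(0,x,v)$ is a search tree on $G'$ via Remark \ref{obs_search_subtree}, the key observation is that $G'-x$ is connected: any path in $G'$ through $x$ can be rerouted through $v$, since $N_{G'}(x)\subseteq N_{G'}[v]\setminus\{x\}$. Hence $V(T)$ is the unique connected component of $G'-x$, settling the condition at the root's child $r_T$. For any deeper non-root vertex $u'\in V(T)$, the set of vertices at strictly lower levels than $u'$ in $T(0,x,v)$ is precisely $\{x\}\cup\bigcup_{k=0}^{d_{T,u'}-1}\mathcal{L}_k(T)$, so the required connected-component property reduces at once to the search-tree property of $T$ on $G$.

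For the inductive step, I would show that the rotation of $x$ and $a_i$ in $T(i,x,v)$ produces $T(i+1,x,v)$; since rotations preserve the search-tree property and determine the edges of $\R(G')$, this simultaneously yields both claims. In $T(i,x,v)$ the vertex $x$ has $a_i$ as its unique child, so the $xa_i$-rotation is well-defined. The rotation rule sends each subtree $T|c_j$ of $a_i$ (where $c_j$ ranges over the children of $a_i$ in $T$) to $x$ or keeps it at $a_i$ according to whether $x$ has a $G'$-neighbour inside it. By Remark \ref{obs_search_subtree} applied to $T$, for each such $c_j$ the set $V(T|c_j)$ is a connected component of $G-\bigcup_{k=0}^{i}\mathcal{L}_k(T)$, so no $G$-edge joins $v\in V(T|a_{i+1})$ to any $V(T|c_j)$ with $c_j\neq a_{i+1}$. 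Combined with $N_{G'}(x)\subseteq N_G(v)\cup\{v\}$, this forces $x$ to have no $G'$-neighbour outside $V(T|a_{i+1})$, so only $T|a_{i+1}$ migrates to become a subtree of $x$; the resulting tree coincides with $T(i+1,x,v)$.

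The main obstacle is the exact matching of the rotation output with $T(i+1,x,v)$, especially ruling out the migration of sibling subtrees of $T|a_{i+1}$ to $x$. Once this is established, the induction yields simultaneously that every $T(i,x,v)$ is a search tree on $G'$ and that consecutive ones are adjacent in $\R(G')$, so the vertices $T(i,x,v)$ with $i\in\{0,\ldots,d_{T,v}\}$ lie on a path.
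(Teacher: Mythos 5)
Your induction is a sound reorganization of the paper's argument. The paper verifies the search-tree condition of Remark \ref{obs_search_subtree} directly for every $T(i,x,v)$ at once (observing that $V(T(i,x,v)|u)$ equals $V(T|u)\cup\{x\}$ or $V(T|u)$ and that $x$ is never a cut vertex of the relevant subgraph because $v$ can stand in for it), and then records the path claim by simply noting that consecutive trees differ by an $xa_i$-rotation. You instead verify only $i=0$ directly and let the rotation carry the search-tree property forward; this puts more weight on the rotation-matching step, which is in fact the part the paper leaves most implicit, so your write-up is arguably more complete on that point. Your base case and your argument that no sibling subtree $T|c_j$ with $c_j\neq a_{i+1}$ can migrate to $x$ are both correct: $N_{G'}(x)\subseteq N_G(v)\cup\{v\}$ and the connected-component property of $T$ at level $i+1$ confine every neighbour of $x$ lying below level $i$ of $T$ to $V(T|a_{i+1})$.

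There is one half of the ``exact matching'' that you identify as the main obstacle but do not actually discharge: you rule out extra subtrees migrating to $x$, but you never verify that $T|a_{i+1}$ \emph{does} migrate. By the rotation rule, $T|a_{i+1}$ becomes a subtree of $x$ only if $x$ has a $G'$-neighbour in $V(T|a_{i+1})$; if it did not, the rotation would leave $x$ as a leaf child of $a_i$, which is not $T(i+1,x,v)$ when $i<d_{T,v}$. The fix is one line: the hypothesis $N_{G'}[x]\subseteq N_{G'}[v]$ gives $x\in N_{G'}[v]$, and since $x\neq v$ this means $xv\in E(G')$; as $v\in V(T|a_{i+1})$, the subtree $T|a_{i+1}$ contains the neighbour $v$ of $x$ and therefore moves under $x$, completing the identification of the rotation output with $T(i+1,x,v)$. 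With that sentence added, the proof is complete.
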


\begin{proof}
    First, notice that $x$ is not a cut vertex of $G'$. Let $T'(i)=T(i,x,v)$ for $i\in\{0,\ldots,d_{T,v}\}$. For every $u\in V(T'(i))$, the vertex set of $T'(i)|u$ is $V(T|u)\cup\{x\}$ if $u$ is an ancestor of $x$ in $T'(i)$ or $u=x$, and $V(T|u)$ otherwise.

    In any case, $T'(i)|u$ induces a connected component of $G'-\bigcup\limits_{j=0}^{d_{T'(i),u}-1} \mc L_j(T'(i))$ since $x$ is not a cut vertex of $T'(i)|u$. Thus, $T'(i)$ is a search tree on $G'$.

    Furthermore, notice that $T'(i)$ differs from $T'(i+1)$ by a $xa_i$-rotation. That is, the vertices $T'(i)$ with $i\in\{0,\ldots,d_{T,v}\}$ are on a path in $\R(G')$.
\end{proof}

\subsection{Adding a simplicial vertex}
\label{subsection_adding_simplicial}

Let $G$ be a connected graph and $K\subseteq V(G)$ a subset of vertices that induces a clique in $G$. We denote by $G_K$ the graph determined by
\begin{align*}
    V(G_K) &= V(G)\cup\{x\},\\
    E(G_K) &= E(G)\cup\{vx \mid v\in K \},
\end{align*}
assuming $x$ is not a vertex of $G$. That is, $G_K$ is the graph obtained from $G$ by adding a simplicial vertex $x$ whose open neighbourhood is $K$. Note that $N_{G_K}[x]\subseteq N_{G_K}[v]$, for all $v\in K$.

Let $T$ be a search tree on $G$ and $\lambda(T) = \max \{d_{T,u} \mid u\in K\}$. We denote by $v_{\lambda(T)}$ the unique vertex from $K$ of level $\lambda(T)$ in $T$. Notice that this is well defined since by Remark \ref{obs_uv_branch}, all the vertices of $K$ are on the same branch on $T$. For simplicity, for every $i\in\{0,\dots,\lambda(T)\}$, in this section we denote $T(i,x,v_{\lambda(T)})$ as $T(i)$. We define
\[
P^x(T)=\{T(i)\mid T\in V(\R(G)), i\in\{0,\ldots,\lambda(T)+1\}\}.
\]

A chordal graph is known to have a perfect elimination ordering, meaning there exists a sequence of its vertices such that every vertex $v$ is a simplicial vertex in the subgraph induced by $v$ and the preceding vertices in the sequence. In \cite{CMM-2021}, the authors present an algorithm that determines $V(\R(G'))$ for a chordal graph $G'$. In the following proposition, we extend this idea to characterize the vertices of $V(\R(G_K))$, where $G_K$ may not be chordal but includes at least one simplicial vertex.

\begin{obs}
\label{obs_simplicial_only_child}
    It is not hard to see that a vertex $w\in V(G)$ has at least two nonadjacent neighbors $u,v$ in $G$ if and only if there exists a search tree of $G$ such that $w$ has at least two children. Thus, a vertex $w$ has at most one child in every search tree of $G$ if and only if $N(w)$ is a complete graph, i.e. if $w$ is a simplicial vertex in $G$.
\end{obs}


\begin{prop}
\label{prop_part_PpT}
Let $G$ be a connected graph and $K\subseteq V(G)$ a subset of vertices that induces a clique in $G$. Then $\{P^x(T) \mid T\in V(\R(G))\}$ is a partition of $V(\R(G_K))$. 
\end{prop}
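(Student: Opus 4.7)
The plan is to verify the three defining properties of a partition: each $P^x(T)$ is contained in $V(\R(G_K))$, their union exhausts $V(\R(G_K))$, and they are pairwise disjoint.

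For the inclusion, note that for any $v \in K$ one has $N_{G_K}[x] = K \cup \{x\} \subseteq N_{G_K}[v]$; in particular this holds for $v_{\lambda(T)}$. Lemma \ref{obs_Tix_are_ST_and_path} then gives that $T(i, x, v_{\lambda(T)}) \in V(\R(G_K))$ for every $i \in \{0, \ldots, \lambda(T)\}$. For $i = \lambda(T)+1$, the tree attaches $x$ as a new leaf-child of $v_{\lambda(T)}$; by Remark \ref{obs_uv_branch}, $K$ lies entirely on the root-to-$v_{\lambda(T)}$ path in $T$, so after removing the top $\lambda(T)+1$ levels of the new tree, $x$ has no neighbors in $G_K$, and Remark \ref{obs_search_subtree} applies.

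For the covering, fix $T^* \in V(\R(G_K))$. Since $N_{G_K}(x) = K$ is a clique, $x$ is simplicial in $G_K$, and Remark \ref{obs_simplicial_only_child} forces $x$ to have at most one child in $T^*$, so $p(T^*, x)$ is well defined. If $x$ has a child $c$ in $T^*$, the subtree $V(T^*|x)$ is a connected component of $G_K$ with the levels above $x$ removed, so some $k \in K$ must lie as a descendant of $x$; Lemma \ref{obs_pTx_is_ST} with $v = k$ then shows $T := p(T^*, x) \in V(\R(G))$. If instead $x$ is a leaf, then its parent $p$ must be in $K$: because $\{x\}$ is the component at $x$'s level, all of $K$ sits strictly above $x$; the only way to connect $x$ to $p$ inside the component $V(T^*|p)$ is through a neighbor of $x$, forcing $p \in K$. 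A direct check via Remark \ref{obs_search_subtree} likewise gives $T := p(T^*, x) \in V(\R(G))$ in this case.

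To place $T^*$ inside $P^x(T)$: in the leaf case, $p$ is the deepest $K$-vertex in $T^*$ and hence in $T$, so $p = v_{\lambda(T)}$ and Remark \ref{obs_ins_and_prun_are_inverses} gives $T^* = T(\lambda(T)+1, x, v_{\lambda(T)})$. In the internal case, the $K$-vertices strictly below $x$ in $T^*$ become descendants of $c$ in $T$ landing at levels $\geq d_{T, c}$, while those above $x$ stay at levels $\leq d_{T, c}-1$, so $v_{\lambda(T)}$ is a descendant of $c$ or coincides with $c$. Since $T(i, x, w)$ depends only on the root-to-$w$ path up to level $i$, one has $T(d_{T^*, x}, x, c) = T(d_{T^*, x}, x, v_{\lambda(T)})$, and by Remark \ref{obs_ins_and_prun_are_inverses} both equal $T^*$. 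Pairwise disjointness is then immediate: if $T^* \in P^x(T_1) \cap P^x(T_2)$, then $T_1 = p(T^*, x) = T_2$. The main delicate point is that neither $c$ (internal case) nor $p$ (leaf case) need a priori lie in $K$, so identifying $v_{\lambda(T)}$ via the $K$-branch of $T$ is where the argument requires care.
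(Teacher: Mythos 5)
Your proof is correct and follows essentially the same route as the paper's: inclusion via Lemma \ref{obs_Tix_are_ST_and_path} (plus the direct check for $i=\lambda(T)+1$), covering by eliminating $x$ and invoking Lemma \ref{obs_pTx_is_ST} together with Remark \ref{obs_ins_and_prun_are_inverses}, and disjointness from the fact that elimination inverts insertion. Your case split (whether $x$ is a leaf of $T^*$) matches the paper's split on whether $v_{\lambda(T)}$ is a descendant of $x$, and your extra verification that $T(d_{T^*,x},x,c)=T(d_{T^*,x},x,v_{\lambda(T)})$ just makes explicit a step the paper leaves implicit.
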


\begin{proof}    
    Let $T\in V(\R(G))$, that is, $T$ is a search tree on $G_K-x$. Recall that $N_{G_K}[x]\subseteq N_{G_K}[v_{\lambda(T)}]$, and therefore by Lemma \ref{obs_Tix_are_ST_and_path}, for every $i\in\{0,\ldots, \lambda(T)\}$,  $T(i)$, is a search tree on $G_K$. In addition, note that $\{x\}$ is a connected component of $G_K-\bigcup\limits_{j=0}^{\lambda(T)}\mc L_j(T(i))$. Then $T(\lambda(T)+1)$ is also a search tree on $G_K$. Thus, for every $T\in V(\R(G))$, $P^x(T)\subseteq V(\R(G_K))$ and, therefore, $\bigcup\limits_{T\in V(\R(G))}P^x(T) \subseteq V(\R(G_K))$.
    
    Conversely, if $T'$ is a search tree on $G_K$, by Remark \ref{obs_simplicial_only_child}, $T=p(T',x)$ is well-defined. Recall that the vertices in $K$ altogether with $x$ are on a branch in $T'$. If  $v_{\lambda(T)}$ is a descendant of $x$ in $T'$, then $T$ is a search tree on $G$ by Lemma \ref{obs_pTx_is_ST}. Otherwise, since $N_{G_K}(x)=K$, $x$ is a leaf adjacent to $v_{\lambda(T)}$ and $p(T',x)$ is a search tree on $G$. Moreover, $T'=T(i)$ with $i=d_{T',x}$ by Remark \ref{obs_ins_and_prun_are_inverses}. Thus, $T'\in P^x(T)$ and therefore $V(\R(G_K))\subseteq \bigcup\limits_{T\in V(\R(G))}P^x(T)$.
    
    It remains to prove that if for some $T_1,T_2\in V(\R(G))$, $T'\in P^x(T_1)\cap P^x(T_2)$, then $T_1=T_2$. But this is true since $T_1(d_{T',x})=T'=T_2(d_{T',x})$ and therefore $T_1=p(T_1(d_{T',x}),x)=p(T_2(d_{T',x}),x)=T_2$.
\end{proof}

Now, we describe the edges in $\R(G_K)$. 

First, notice that for every $T\in V(\R(G))$ and for every $i\in \{0,\ldots,d_{T,v}-1\}$, $T(i)$ and $T(i+1)$ are adjacent in $\R(G_K)$ by Lemma $\ref{obs_Tix_are_ST_and_path}$. Additionally, $T(\lambda(T))$ and $T(\lambda(T)+1)$ are adjacent since they differ by a $v_{\lambda(T)}x$-rotation. Thus, the search trees on $P^x(T)$ are on a path in $\R(G_K)$.

Next, we study adjacencies in $\R(G_K)$ with one endpoint in $P^x(T)$ and the other in $P^x(T')$ for a pair $T,T'$ of adjacent vertices of $\R(G)$. To this effect, we analyse in the following enumeration the possible cases of rotations that determine adjacency in $\R(G)$.

Let $T\in V(\R(G))$, $k=\lambda(T)$ and $A$ the path in $T$ between $r_T$ and $v_{\lambda(T)}$. Let $T'\in V(\R(G))$ obtained from $T$ by an $ab$-rotation with $a,b$ adjacent vertices of $T$. Suppose $a$ is the parent of $b$ in $T$.

\begin{enumerate}
\item Assume $a$ and $b$ are not in $A$. Then, $r_{T'}=r_T$ and the path in $T'$ between $r_{T'}$ and $v_{\lambda(T)}$ is exactly $A$. Thus, $|P^x(T)|=|P^x(T')|$ and for every $i\in \{0,\ldots,\lambda(T)+1\}$, $T(i)$ is adjacent to $T'(i)$ since they differ by the rotation of the pair $a,b$. Moreover, note that $T(i)$ and $T'(i')$ are adjacent if and only if $i=i'$.
\begin{center}
\begin{tikzpicture}[x=1.5cm]
\foreach \x/\i in {0/0,1/1,3/k/1}{
    \node[vertex,label=above:{$T(\i)$}] at (\x,1) (T1\x) {};
    \node[vertex,label=below:{$T'(\i)$}] at (\x,0) (T2\x) {};
    \draw (\x,1) -- (\x+1,1);
    \draw (\x,0) -- (\x+1,0);
    \draw (\x,0) -- (\x,1);
    }
\draw (2,1) -- (3,1);
\draw (2,0) -- (3,0);
\node[blank] at (2,1) (T12) {$\cdots$};
\node[blank] at (2,0) (T22) {$\cdots$};
\node[vertex,label=above:{$T(k+1)$}] at (4,1) (T1k) {};
\node[vertex,label=below:{$T'(k+1)$}] at (4,0) (T2k) {};
\draw (T1k)--(T2k);
\end{tikzpicture}
\end{center}

\item Now consider the case that $a$ and $b$ are both in $A$. Then $a\neq v_{\lambda(T)}$, since $a$ is the parent of $b$ in $T$.

\begin{enumerate}
\item Suppose $b\neq v_{\lambda(T)}$. Let $S$ be the subtree of $b$ in $T$ that contains $v_{\lambda(T)}$. Suppose $A$ has vertices $r_T=a_0,a_1,\ldots,a_k=v_{\lambda(T)}$ and let $l\in\{1,\ldots,k-1\}$ be such that  $a=a_{l-1}$ and $b=a_l$. 
\begin{enumerate}
\item If there is a vertex in $S$ that is adjacent to $a$ in $G$, then $S$ is a subtree of $a$ in $T'$. Thus, $a$ remains in the path from $r_{T'}$ to $v_{\lambda(T)}$ in $T'$. Therefore $\lambda(T)=\lambda(T')$ and for every $i\in\{0,\ldots,k+1\}$ such that $i\neq l$, $T(i)$ is adjacent to $T'(i)$ since they differ by the $ab$-rotation. Also, $T(l)$ and $T'(l)$ are nonadjacent (notice that neither $T(l)$ nor $T'(l)$ contain the edge $ab$).

\begin{center}
\begin{tikzpicture}[x=1.5cm]
\foreach \x/\i/\c in {0/0/black,2/l-1/black,3/l/white,4/l+1/black}{
    \draw (\x,1) -- (\x+1,1);
    \draw (\x,0) -- (\x+1,0);
    \draw[\c] (\x,0) -- (\x,1);
    \node[vertex,label=above:{\small $T(\i)$}] at (\x,1) (T1\x) {};
    \node[vertex,label=below:{\small $T'(\i)$}] at (\x,0) (T2\x) {};
    }
\foreach \x in {1,5}{
    \draw (\x,1) -- (\x+1,1);
    \draw (\x,0) -- (\x+1,0);
    \node[blank] at (\x,1) (T1\x) {$\cdots$};
    \node[blank] at (\x,0) (T2\x) {$\cdots$};
    }
\node[vertex,label=above:{\small $T(k+1)$}] at (6,1) (T1k) {};
\node[vertex,label=below:{\small $T'(k+1)$}] at (6,0) (T2k) {};
\draw (T1k)--(T2k);
\end{tikzpicture}
\end{center}


\item \label{item_case_leaf} If none of the vertices in $S$ are adjacent to $a$ in $G$, then $S$ is a subtree of $b$ in $T'$ and $a$ is not in the path from $r_{T'}$ to $v_{\lambda(T)}$. Then $\lambda(T')=\lambda(T)-1$. Moreover, the $ab$-rotation determines the adjacency of the pair $T(i),T'(i)$ for every $i\in\{0,\ldots,l-1\}$, the adjacency of the pair $T(i+1),T'(i)$ for every $i\in\{l,\ldots,k\}$. Note that this yields a $5$-cycle $T(l-1),T(l),T(l+1),T'(l),T'(l-1)$.
\medskip
        
\begin{center}
\begin{tikzpicture}[x=1.7cm]
\foreach \x/\i in {0/0,2/l-1,3/l,4/l+1}{
    \draw (\x,1) -- (\x+1,1);
    \node[vertex,label=above:{\footnotesize $T(\i)$}] at (\x,1) (T1\x) {};
    }
\node[vertex,label=above:{\footnotesize $T(k+1)$}] at (6,1) (T1k) {};
\node[vertex,label=below:{\footnotesize $T'(k)$}] at (6,0) (T2k) {};
\foreach \x/\i in {0/0,2/l-1,4/l}{
    \node[vertex,label=below:{\footnotesize $T'(\i)$}] at (\x,0) (\x) {};
    \draw (T1\x) -- (\x);
    }
\draw (0,0)--(1,0)--(2,0)--(4,0)--(5,0)--(6,0)--(6,1)--(5,1);
\draw (1,1)--(2,1);

\node[blank] at (1,1) {$\cdots$};
\node[blank] at (1,0) {$\cdots$};
\node[blank] at (5,0) {$\cdots$};
\node[blank] at (5,1) {$\cdots$};
\end{tikzpicture}
\end{center}   


    \end{enumerate}
\item Finally, suppose $b=v_{\lambda(T)}$. 
    \begin{enumerate}
        \item If $a\in K$, then $\lambda(T)=\lambda(T')$ and $v_{\lambda (T')}=a$. We have that for every $i\in\{0,\ldots,k-1\}$, $T(i)$ and $T'(i)$ are adjacent as well as $T(k+1)$ and $T'(k+1)$, since the differ by an $ab$-rotation. Notice that $a$ and $b$ are not adjacent in $T(k)$ as well as in $T'(k)$, and so $T(k)$ and $T'(k)$ are not adjacent in $\R(G_K)$.

        \begin{center}
\begin{tikzpicture}[x=1.5cm]
\foreach \x/\i in {0/0,2/k-1,3/k}{
    \node[vertex,label=above:{\small $T(\i)$}] at (\x,1) (T\x) {};
    }
    \node[vertex,label=above:{\small $T(k+1)$}] at (4,1) (T4) {};
\foreach \x/\i in {0/0,2/k-1,3/k}{
    \node[vertex,label=below:{\small $T'(\i)$}] at (\x,0) (T'\x) {};
    \node[vertex,label=below:{\small $T'(k+1)$}] at (4,0) (T'4) {};
    }    
\foreach \x in {1}{
    \node[blank] at (\x,1) (T\x) {$\cdots$};
    \node[blank] at (\x,0) (T'\x) {$\cdots$};
    }
\foreach \x/\s in {0/1,1/2,2/3,3/4}{
\draw (T\x)--(T\s);
}
\foreach \x/\s in {0/1,1/2,2/4}{
\draw (T'\x)--(T'\s);
}
\foreach \x in {0,2,4}{
\draw (T\x)--(T'\x);
}
\end{tikzpicture}
\end{center}

        \item If $a\notin K$, then $|P^x(T)|=k+2$ and $|P^x(T')|=k+1$. Moreover, we have that the pairs $T(i),T'(i)$ for $i\in\{0,\ldots,k-1\}$ and $T(k+1),T'(k)$ are adjacent.

\begin{center}
\begin{tikzpicture}[x=1.5cm]
\foreach \x/\i in {0/0,2/k-1,3/k}{
    \node[vertex,label=above:{\small $T(\i)$}] at (\x,1) (T\x) {};
    }
    \node[vertex,label=above:{\small $T(k+1)$}] at (4,1) (T4) {};
\foreach \x/\i in {0/0,2/k-1}{
    \node[vertex,label=below:{\small $T'(\i)$}] at (\x,0) (T'\x) {};
    \node[vertex,label=below:{\small $T'(k)$}] at (4,0) (T'4) {};
    }    
\foreach \x in {1}{
    \node[blank] at (\x,1) (T\x) {$\cdots$};
    \node[blank] at (\x,0) (T'\x) {$\cdots$};
    }
\foreach \x/\s in {0/1,1/2,2/3,3/4}{
\draw (T\x)--(T\s);
}
\foreach \x/\s in {0/1,1/2,2/4}{
\draw (T'\x)--(T'\s);
}
\foreach \x in {0,2,4}{
\draw (T\x)--(T'\x);
}
\end{tikzpicture}
\end{center}
\end{enumerate}
    \end{enumerate}
\item Note that the case $a\in A$, $b\notin A$ is covered in item \ref{item_case_leaf} and that the case $a\notin A$, $b\in A$ never occurs.
\end{enumerate}

Let $e=TT'\in E(\R(G))$. Denote $\varepsilon^x(e)$ the set of edges in $\R(G_K)$ with one endpoint in $P^x(T)$ and the other in $P^x(T')$ described in the previous items. 

Let us see that the edges described above are all the edges of $\R(G_K)$. To that end, consider $R\in P^x(T)$ and the search tree $R_\theta$ obtained from $R$ by a $\theta$-rotation with $\theta\in E(R)$. 

As we have mentioned, if $x\in\theta$, then the edge in $\R(G_K)$ determined by $\theta$ is one of the edges of the path determined by $P^x(R)$. On the other hand, if $x\notin\theta$, then $\theta\in E(T)$ and the $\theta$-rotation could be applied to $T$. This rotation determines an edge $TT'$ in $\R(G)$. Then, it follows that $R_\theta\in P^x(T')$, since $\theta$ coincides with one of the rotations previously described in items 1, 2 or 3. 

Thus, we obtain the following characterization of the edges of $\R(G_K)$.

\begin{prop}
\label{prop_edges_of_RGK}
Let $G$ be a graph and $v\in V(G)$. Then,
\begin{enumerate}[(a)]
    \item for every $T\in V(\R(G))$, $P^x(T)$ induces a path in $\R(G_K)$, and
    \item \label{item_edges} $E(\R(G_K)) = \left(\bigcup\limits_{T\in V(\R(G))} E(P^x(T))\right) \cup \left(\bigcup\limits_{e\in E(\R(G))} \varepsilon^x(e)\right)$.
\end{enumerate}
\end{prop}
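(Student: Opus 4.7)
My plan is to exploit the detailed case analysis already laid out in the paragraphs preceding the statement, and to combine it with the partition result of Proposition~\ref{prop_part_PpT}.

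For part (a), I would first appeal to Lemma~\ref{obs_Tix_are_ST_and_path}, which yields that $T(0), T(1), \ldots, T(\lambda(T))$ all lie on a common path in $\R(G_K)$, with consecutive trees related by an $xa_i$-rotation along the branch containing $K$. It then suffices to observe that $T(\lambda(T))$ and $T(\lambda(T)+1)$ differ precisely by the $v_{\lambda(T)}x$-rotation (moving $x$ from being along the branch in position $\lambda(T)$ to being a leaf child of $v_{\lambda(T)}$). Concatenating gives that all of $P^x(T)$ lies on a single path in $\R(G_K)$, proving (a).

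For part (b), the inclusion $\supseteq$ follows from the analysis preceding the statement: each scenario considered there explicitly exhibits an edge in $\R(G_K)$ that either lies inside some $E(P^x(T))$ (when the rotated edge involves $x$) or inside some $\varepsilon^x(e)$ for a suitable $e\in E(\R(G))$. For the reverse inclusion $\subseteq$, let $RR'$ be an arbitrary edge of $\R(G_K)$ coming from a rotation of an edge $\theta$. By Proposition~\ref{prop_part_PpT}, there exists a unique $T\in V(\R(G))$ with $R\in P^x(T)$. If $x$ is an endpoint of $\theta$, then the rotation merely shifts $x$ along the branch of $K$ (or swaps $x$ with $v_{\lambda(T)}$), so $R'\in P^x(T)$ and $RR'\in E(P^x(T))$.

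If $x$ is not an endpoint of $\theta$, then $\theta$ is an edge of $T=p(R,x)$, and performing that same rotation on $T$ produces some $T'\in V(\R(G))$ with $TT'\in E(\R(G))$; uniqueness of the class containing $R'$ guaranteed by Proposition~\ref{prop_part_PpT}, together with the enumeration in items 1--3 above, forces $R'\in P^x(T')$ and hence $RR'\in \varepsilon^x(TT')$. The main obstacle is exactly this last step: I need to verify that the rotation of an edge $\theta$ not involving $x$ commutes with the elimination $p(\cdot,x)$, that is, $p(R',x)$ coincides with the rotation of $\theta$ applied to $T$. This should follow from inspecting how subtrees reattach under both operations, using that $x$ is simplicial in $G_K$ so its position does not interfere with rotations of edges disjoint from $x$, and invoking Remark~\ref{obs_ins_and_prun_are_inverses}; nevertheless, the bookkeeping between the insertion index $i$ and the structure of the rotation is where the technical care will be needed.
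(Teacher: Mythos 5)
Your proposal is correct and follows essentially the same route as the paper: part (a) via Lemma \ref{obs_Tix_are_ST_and_path} plus the final $v_{\lambda(T)}x$-rotation, and part (b) by splitting on whether the rotated edge $\theta$ contains $x$ and, when it does not, pushing the rotation down to $T=p(R,x)$ and appealing to the preceding case enumeration. The commutation issue you flag at the end is handled in the paper in exactly the same way, namely by observing that $\theta$ must coincide with one of the rotations already classified in items 1--3, so no additional argument is needed beyond what you outline.
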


In the following proposition, we show that there are two induced subgraphs of $\R(G_K)$ isomorphic to $\R(G)$. One of them has the set of all search trees on $G_K$ with root $x$ as its vertex set, and the other has the set of all search trees on $G_K$ that has $x$ as a child of $v_\lambda(T)$.

\begin{prop}
\label{prop_copies_of_RG_in_RGK}
Let $\R_0=\{T(0)\in V(\R(G_K)) \mid T\in V(\R(G))\}$ and $\R_1=\{T(\lambda(T)+1)\in V(\R(G_K)) \mid T\in V(\R(G))\}$. Then $\R_0$ and $\R_1$ induce subgraphs in $\R(G_K)$ that are isomorphic to $\R(G)$.
\end{prop}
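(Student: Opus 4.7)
The plan is to define $\varphi_0, \varphi_1 : V(\R(G)) \to V(\R(G_K))$ by $\varphi_0(T) = T(0)$ and $\varphi_1(T) = T(\lambda(T)+1)$, and to show that each is a graph isomorphism onto the subgraph of $\R(G_K)$ induced on its image $\R_0$, $\R_1$, respectively. Bijectivity of $\varphi_i$ onto $\R_i$ is immediate: by Proposition \ref{prop_part_PpT}, the cells $P^x(T)$ partition $V(\R(G_K))$, so distinct trees $T$ yield images in distinct cells, and surjectivity onto $\R_0, \R_1$ holds by definition of these sets.

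For the forward direction of adjacency preservation, I would revisit the case analysis developed just before Proposition \ref{prop_edges_of_RGK}. In each of the five subcases 1, 2(a)(i), 2(a)(ii), 2(b)(i), and 2(b)(ii) describing $\varepsilon^x(TT')$ for an edge $TT' \in E(\R(G))$, one reads off that $T(0) \sim T'(0)$ always belongs to the edge set, because the only index that is potentially excluded is the parameter $l \geq 1$ of Case 2(a)(i). The adjacency $T(\lambda(T)+1) \sim T'(\lambda(T')+1)$ is also present in every subcase: it is immediate in the three cases where $\lambda(T) = \lambda(T')$, and in Cases 2(a)(ii) and 2(b)(ii), where $\lambda(T') = \lambda(T)-1$, the listed adjacency $T(k+1) \sim T'(k)$ is exactly $T(\lambda(T)+1) \sim T'(\lambda(T')+1)$.

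For the converse, suppose $\varphi_i(T) \sim \varphi_i(T')$ in $\R(G_K)$. Since $\R(G_K)$ is simple, the two trees are distinct, hence by Proposition \ref{prop_part_PpT} they lie in distinct cells $P^x(T)$ and $P^x(T')$. Proposition \ref{prop_edges_of_RGK}(\ref{item_edges}) then implies that the edge joining them is not internal to any single $P^x(U)$, so it must belong to $\varepsilon^x(e)$ for some $e \in E(\R(G))$; since edges of $\varepsilon^x(e)$ join the pair of cells indexed by the endpoints of $e$, we conclude $e = TT'$, and therefore $TT' \in E(\R(G))$.

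The main obstacle will be the bookkeeping in the forward direction for $\varphi_1$: whereas $\varphi_0$ always picks out the fixed index $0$ of $P^x(T)$, $\varphi_1$ picks out the variable last index $\lambda(T)+1$, and one must carefully reconcile the listed adjacency $T(k+1) \sim T'(k)$ in the two subcases where $\lambda$ decreases by one across the rotation with the required $T(\lambda(T)+1) \sim T'(\lambda(T')+1)$.
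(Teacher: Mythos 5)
Your proposal is correct and follows essentially the same route as the paper, which simply asserts that $f_0(T)=T(0)$ and $f_1(T)=T(\lambda(T)+1)$ are isomorphisms; you supply the details (bijectivity via the partition of Proposition \ref{prop_part_PpT}, and both directions of edge preservation via the case analysis and Proposition \ref{prop_edges_of_RGK}(\ref{item_edges})). One tiny imprecision: besides the index $l\geq 1$ excluded in Case 2(a)(i), the index $k$ is also excluded in Case 2(b)(i) ($T(k)$ and $T'(k)$ are nonadjacent there), but since $k\geq 1$ and $T(k+1)\sim T'(k+1)$ is still listed, this does not affect your conclusion for either $\varphi_0$ or $\varphi_1$.
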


\begin{proof}
The maps $f_0: V(\R(G))\rightarrow V(\R_0)$ defined by $f_0(T)=T(0)$ and $f_1:V(\R(G))\rightarrow\R_1$ defined by $f_1(T)=T(\lambda(T)+1)$ are isomorphisms. 
\end{proof}

\begin{figure} 
    \centering
    \begin{subfigure}{0.2\textwidth}
    \begin{tikzpicture}
    \node at (-0.7,0.5) (G) {$G$};
    \node[vertex,label=below:{\small 1}] at (0,0) (1) {};
    \node[vertex,label=above:{\small 2}] at (0,1) (2) {};
    \node[vertex,label=above:{\small 3}] at (1,1) (3) {};
    \draw (1)--(2)--(3)--(1);
\end{tikzpicture}
\medskip 

\begin{tikzpicture}[x=1cm,y=0.8cm]
    \node[vertex,label=above:{\scriptsize 123}] at (0.5,5) (4123) {};
    \node[vertex,label=left:{\scriptsize 213}] at (0,4) (4213) {};  
    \node[vertex,label=right:{\scriptsize 132}] at (1,3) (4132) {};
    \node[vertex,label=left:{\scriptsize 231}] at (0,2) (4231) {};
    \node[vertex,label=right:{\scriptsize 312}] at (1,1) (4312) {};
    \node[vertex,label=below:{\scriptsize 321}] at (0.5,0) (4321) {};
    \draw (4123)--(4213)--(4231)--(4321)--(4312)--(4132)--(4123);
\end{tikzpicture}
\caption{$\R(G)$}
\label{fig_G}
    \end{subfigure}
    \begin{subfigure}{0.6\textwidth}
    \centering
\begin{tikzpicture}
    \node at (-0.7,0.5) (Gv) {$G_K$};
    \node[vertex,label=below:{\small 1}] at (0,0) (1) {};
    \node[vertex,label=above:{\small 2}] at (0,1) (2) {};
    \node[vertex,label=above:{\small 3}] at (1,1) (3) {};
    \node[vertex,label=below:{\small $x=4$}] at (1,0) (4) {};
    \draw (1)--(2)--(3)--(4)--(1)--(3);
    \draw (2)--(4);
\end{tikzpicture}
\medskip

\begin{tikzpicture}[x=1cm,y=0.8cm] 
    \node[vertex,label=above:{\scriptsize 4123}] at (0.5,5) (4123) {};
    \node[vertex,label=above:{\scriptsize 1423}] at (2.5,5) (1423) {};
    \node[vertex,label=above:{\scriptsize 1243}] at (4.5,5) (1243) {};
    \node[vertex,label=above:{\scriptsize 1234}] at (6.5,5) (1234) {};
    \node[vertex,label=left:{\scriptsize 4213}] at (0,4) (4213) {};
    \node[vertex,label=above:{\scriptsize 2413}] at (2,4) (2413) {};
    \node[vertex,label=below:{\scriptsize 2143}] at (4,4) (2143) {};
    \node[vertex,label=right:{\scriptsize 2134}] at (6,4) (2134) {};
    \node[vertex,label=left:{\scriptsize 4132}] at (1,3) (4132) {};
    \node[vertex,label=below:{\scriptsize 1432}] at (3,3) (1432) {};
    \node[vertex,label=above:{\scriptsize 1342}] at (5,3) (1342) {};
    \node[vertex,label=right:{\scriptsize 1324}] at (7,3) (1324) {};
    \node[vertex,label=left:{\scriptsize 4231}] at (0,2) (4231) {};
    \node[vertex,label=below:{\scriptsize 2431}] at (2,2) (2431) {};
    \node[vertex,label=above:{\scriptsize 2341}] at (4,2) (2341) {};
    \node[vertex,label=right:{\scriptsize 2314}] at (6,2) (2314) {};
    \node[vertex,label=left:{\scriptsize 4312}] at (1,1) (4312) {};
    \node[vertex,label=above:{\scriptsize 3412}] at (3,1) (3412) {};
    \node[vertex,label=below:{\scriptsize 3142}] at (5,1) (3142) {};
    \node[vertex,label=right:{\scriptsize 3124}] at (7,1) (3124) {};
    \node[vertex,label=below:{\scriptsize 4321}] at (0.5,0) (4321) {};
    \node[vertex,label=below:{\scriptsize 3421}] at (2.5,0) (3421) {};
    \node[vertex,label=below:{\scriptsize 3241}] at (4.5,0) (3241) {};
    \node[vertex,label=below:{\scriptsize 3214}] at (6.5,0) (3214) {};
    \draw (4123)--(4213)--(4231)--(4321)--(4312)--(4132)--(4123);
    \draw (4123)--(1423)--(1243)--(1234);
    \draw (2413)--(2431);
    \draw (3421)--(3412);
    \draw (1432)--(1423);
    \draw (1243)--(2143);
    \draw (2341)--(3241);
    \draw (3142)--(1342);
    \draw (1234)--(2134)--(2314)--(3214)--(3124)--(1324)--(1234);
    \draw (4213)--(2413)--(2143)--(2134);
    \draw (4132)--(1432)--(1342)--(1324);
    \draw (4231)--(2431)--(2341)--(2314);
    \draw (4312)--(3412)--(3142)--(3124);
    \draw (4321)--(3421)--(3241)--(3214);
\end{tikzpicture}
    \caption{$\R(G_K)$ for $K=\{1,2,3\}$}
    \label{fig_GK}
    \end{subfigure}
    \caption{ }
\end{figure}

Figures \ref{fig_G} and \ref{fig_GK} depicts the graph $G=K_3$, and $G_K$ for the clique $K=\{1,2,3\}$, where the added simplicial vertex $x$ is relabelled 4, to make clear that $G_K$ is isomorphic to $K_4$.

\subsection{Adding a true twin vertex}
\label{subsection_adding_tt}

Let $G$ be a connected graph and $v\in V(G)$. We denote by $G_v$ the graph determined by
\begin{align*}
    V(G_v) &= V(G)\cup \{v'\},\\
    E(G_v) &= E(G) \cup \{uv' \mid u\in N_G(v)\} \cup \{vv'\}.
\end{align*}
That is, $G_v$ is the graph obtained from $G$ by adding a vertex $v'$ to $G$ that is a true twin of $v$. 

In order to define search trees of $G_v$ in terms of search trees on $G$, we introduce the following function. Given $u,v\in V(G)$, denote $\rho_{u,v}: V(G)\rightarrow V(G)$ the function defined by $\rho_{u,v}(u)=v$, $\rho_{u,v}(v)=u$ and $\rho_{u,v}(x)=x$ if $x\in V(G)-\{u,v\}$. Notice that if $u$ and $v$ are true twins in $G$, then $\rho_{u,v}$ is a graph isomorphism. 

Let $G,H$ be connected graphs and let $f:V(G)\rightarrow V(H)$ be a graph isomorphism. For every $T\in V(\R(G))$, we denote by $f^{\ast}(T)$ the rooted tree with vertex set $f(V(G))=V(H)$ and edge set $\{f(x)f(y) \mid xy\in E(T)\}$. Notice that since $f$ is a graph isomorphism, $f^{\ast}(T)$ is a search tree on $H$.

Let $G$ be a connected graph and $v\in V(G)$. Consider $T$ a search tree on $G$. For $i\in\{0,\ldots,d_{T,v}\}$ and $j\in\{1,2\}$, we denote $T(i,j)$ the rooted tree with vertices $V(G_v)$ defined as follows. 

\begin{itemize}
    \item $T(i,1)$ is the rooted tree $T(i,v',v)$.
    \item $T(i,2)=\rho^\ast_{v,v'}(T(i,1))$.
\end{itemize}

Additionally, we denote 
\[P(T) = \{T(i,j) \mid i\in\{0,\ldots,d_{T,v}\}, j\in\{1,2\}\}.
\]

\begin{figure}
    \centering
\begin{tikzpicture}[x=0.7cm,y=0.8cm] 
    \node[vertex,label=above:{\scriptsize 4123}] at (0.5,5) (4123) {};
    \draw (0.5,4) node[rectangle,minimum height=0.3cm,fill=white, fill opacity=0.8] { };
    \node[vertex,label=above:{\scriptsize 1423}] at (2.5,5) (1423) {};
    \node[vertex,fill=red,label=above:{\scriptsize 1243}] at (4.5,5) (1243) {};
    \node[vertex,fill=red,label=above:{\scriptsize 1234}] at (6.5,5) (1234) {};
    \node[vertex,label=above:{\scriptsize 1324}] at (8.5,5) (1324) {};
    \node[vertex,label=above:{\scriptsize 3124}] at (10.5,5) (3124) {};
    \node[vertex,label=left:{\scriptsize 4213}] at (0,4) (4213) {};
    \node[vertex,label=above:{\scriptsize 2413}] at (2,4) (2413) {};
    \node[vertex,fill=blue,label=below:{\scriptsize 2143}] at (4,4) (2143) {};
    \node[vertex,fill=blue,label=below:{\scriptsize 2134}] at (6,4) (2134) {};
    \node[vertex,label=above:{\scriptsize 2314}] at (8,4) (2314) {};
    \node[vertex,label=right:{\scriptsize 3214}] at (10,4) (3214) {};
    \node[vertex,label=left:{\scriptsize 4132}] at (1,3) (4132) {};
    \node[vertex,label=below:{\scriptsize 1432}] at (3,3) (1432) {};
    \node[vertex,label=below:{\scriptsize 1342}] at (9,3) (1342) {};
    \node[vertex,label=right:{\scriptsize 3142}] at (11,3) (3142) {};
    \node[vertex,label=left:{\scriptsize 4231}] at (0,2) (4231) {};
    \node[vertex,label=below:{\scriptsize 2431}] at (2,2) (2431) {};
    \node[vertex,label=below:{\scriptsize 2341}] at (8,2) (2341) {};
    \node[vertex,label=right:{\scriptsize 3241}] at (10,2) (3241) {};
    \node[vertex,label=left:{\scriptsize 4312}] at (1,1) (4312) {};
    \node[vertex,label=right:{\scriptsize 3412}] at (11,1) (3412) {};
    \node[vertex,label=below:{\scriptsize 4321}] at (0.5,0) (4321) {};
    \node[vertex,label=below:{\scriptsize 3421}] at (10.5,0) (3421) {};
    \draw (4123)--(4213)--(4231)--(4321)--(4312)--(4132)--(4123);
    \draw (3124)--(3214)--(3241)--(3421)--(3412)--(3142)--(3124);
    \draw (4123)--(1423)--(1243);
    \draw[red] (1243)--(1234);
    \draw (1234)--(1324)--(3124);
    \draw (4213)--(2413)--(2143);
    \draw[blue] (2143)--(2134);
    \draw (2134)--(2314)--(3214);
    \draw (4132)--(1432)--(1342)--(3142);
    \draw (4231)--(2431)--(2341)--(3241);
    \draw (4312)--(3412);
    \draw (4321)--(3421);
    \draw (1243)--(2143);
    \draw (1234)--(2134);
    \draw (2413)--(2431);
    \draw (2314)--(2341);
    \draw (1432)--(1423);
    \draw (1342)--(1324);
\end{tikzpicture}
\caption{$\R(K_4)$}
\label{fig:RK4}
\end{figure}

Figure \ref{fig:P(T)} depicts a graph $G$ and $G_v$ for $v\in V(G)$, as well as a search tree $T$ on $G$ and the members of $P(T)$. Notice that in this case, $P(T)\subseteq V(\R(G_v))$. This is true in general and moreover, the sets $P(T)$ with $T\in V(\R(G))$ are a partition of $V(\R(G_v))$, as we state in the following proposition. 

\begin{figure} 
\begin{center}
\begin{tikzpicture}
\node[vertex,label=left:{\small $b$}] at (0,0) (b1) {};
\node[vertex,label=left:{\small $a$}] at (0,1) (a1) {};
\node[vertex,label=right:{\small $c$}] at (1,0) (c1) {};
\node[vertex,label=right:{\small $v$}] at (1,1) (v1) {};
\draw (a1)--(b1)--(c1)--(v1)--(a1);
\node at (0.5,-0.8) (G) {\small $G$};

\node[vertex,label=left:{\small $b$}] at (3,0) (b2) {};
\node[vertex,label=left:{\small $a$}] at (3,1) (a2) {};
\node[vertex,label=right:{\small $c$}] at (4,0) (c2) {};
\node[vertex,label={[xshift=0.3cm, yshift=-0.4cm]\small $v$}] at (4,1) (v12) {};
\node[vertex,label=right:{\small $v'$}] at (5,1.5) (v22) {};
\draw (a2)--(b2)--(c2)--(v12)--(a2);
\draw (a2)--(v22)--(c2);
\draw (v12)--(v22);
\node at (4,-0.8) (Gv) {\small $G_v$};

\node[vertex,label=left:{\small $b$}] at (7,0) (b3) {};
\node[vertex,label=right:{\small $a$}] at (7.5,1.5) (a3) {};
\node[vertex,label=right:{\small $c$}] at (7.5,0.75) (c3) {};
\node[vertex,label=right:{\small $v$}] at (8,0) (v3) {};
\draw (a3)--(c3)--(b3);
\draw (c3)--(v3);
\node at (7.5,-0.8) (G) {\small $T$};
\end{tikzpicture}
\medskip
\end{center}

\begin{center} 
\begin{tikzpicture}[scale=0.9]
\node[vertex,label=right:{\small $v$}] at (0,3) (v11) {};
\node[vertex,label=right:{\small $a$}] at (0,2) (a1) {};
\node[vertex,label=right:{\small $c$}] at (0,1) (c1) {};
\node[vertex,label=left:{\small $b$}] at (-0.5,0) (b1) {};
\node[vertex,label=right:{\small $v'$}] at (0.5,0) (v21) {};
\draw (v11)--(a1)--(c1)--(b1);
\draw (c1)--(v21);
\node at (0,-0.8) (T01) {\small $T(0,1)$};

\node[vertex,label=right:{\small $a$}] at (3,3) (a2) {};
\node[vertex,label=right:{\small $v$}] at (3,2) (v12) {};
\node[vertex,label=right:{\small $c$}] at (3,1) (c2) {};
\node[vertex,label=left:{\small $b$}] at (2.5,0) (b2) {};
\node[vertex,label=right:{\small $v'$}] at (3.5,0) (v22) {};
\draw (a2)--(v12)--(c2)--(v22);
\draw (c2)--(b2);
\node at (3,-0.8) (T11) {\small $T(1,1)$};

\node[vertex,label=right:{\small $a$}] at (6,3) (a3) {};
\node[vertex,label=right:{\small $c$}] at (6,2) (c3) {};
\node[vertex,label=left:{\small $b$}] at (5.5,1) (b3) {};
\node[vertex,label=right:{\small $v$}] at (6.5,1) (v13) {};
\node[vertex,label=right:{\small $v'$}] at (6.5,0) (v23) {};
\draw (a3)--(c3)--(v13)--(v23);
\draw (c3)--(b3);
\node at (6,-0.8) (T21) {\small $T(2,1)$};

\node[vertex,label=right:{\small $a$}] at (9,3) (a4) {};
\node[vertex,label=right:{\small $c$}] at (9,2) (c4) {};
\node[vertex,label=left:{\small $b$}] at (8.5,1) (b4) {};
\node[vertex,label=right:{\small $v'$}] at (9.5,1) (v24) {};
\node[vertex,label=right:{\small $v$}] at (9.5,0) (v14) {};
\draw (a4)--(c4)--(v24)--(v14);
\draw (c4)--(b4);
\node at (9,-0.8) (T22) {\small $T(2,2)$};

\node[vertex,label=right:{\small $a$}] at (12,3) (a5) {};
\node[vertex,label=right:{\small $v'$}] at (12,2) (v25) {};
\node[vertex,label=right:{\small $c$}] at (12,1) (c5) {};
\node[vertex,label=left:{\small $b$}] at (11.5,0) (b5) {};
\node[vertex,label=right:{\small $v$}] at (12.5,0) (v15) {};
\draw (a5)--(v25)--(c5)--(v15);
\draw (c5)--(b5);
\node at (12,-0.8) (T12) {\small $T(1,2)$};

\node[vertex,label=right:{\small $v'$}] at (15,3) (v26) {};
\node[vertex,label=right:{\small $a$}] at (15,2) (a6) {};
\node[vertex,label=right:{\small $c$}] at (15,1) (c6) {};
\node[vertex,label=left:{\small $b$}] at (14.5,0) (b6) {};
\node[vertex,label=right:{\small $v$}] at (15.5,0) (v16) {};
\draw (v26)--(a6)--(c6)--(b6);
\draw (c6)--(v16);
\node at (15,-0.8) (T02) {\small $T(0,2)$};
\end{tikzpicture}
\end{center}
    \caption{Members of $P(T)$ for the search tree $T$ on $G$.}
    \label{fig:P(T)}
\end{figure}

\begin{prop}
Let $G$ be a connected graph and let $v\in V(G)$. Then $\{P(T)\mid T\in V(\R(G))\}$ is a partition of $V(\R(G_v))$.
\end{prop}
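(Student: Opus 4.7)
The plan is to adapt the strategy of Proposition \ref{prop_part_PpT}, establishing three ingredients: (i) $P(T) \subseteq V(\R(G_v))$ for every $T\in V(\R(G))$, (ii) every search tree on $G_v$ lies in some $P(T)$, and (iii) the sets $P(T_1)$ and $P(T_2)$ are disjoint whenever $T_1 \neq T_2$.

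For (i), since $v$ and $v'$ are true twins in $G_v$ we have $N_{G_v}[v']=N_{G_v}[v]$, so Lemma \ref{obs_Tix_are_ST_and_path} applied with $x=v'$ shows that each $T(i,1)=T(i,v',v)$ is a search tree on $G_v$ for $i\in\{0,\dots,d_{T,v}\}$. Moreover, $\rho_{v,v'}$ is an automorphism of $G_v$, so the induced map $\rho^{\ast}_{v,v'}$ sends search trees on $G_v$ to search trees on $G_v$; therefore each $T(i,2)=\rho^{\ast}_{v,v'}(T(i,1))$ is also in $V(\R(G_v))$.

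For (ii), let $T'\in V(\R(G_v))$. Since $vv'\in E(G_v)$, Remark \ref{obs_uv_branch} forces $v$ and $v'$ onto a common branch of $T'$, so one is an ancestor of the other. If $v'$ is an ancestor of $v$, Lemma \ref{obs_pTx_is_ST} yields that $T:=p(T',v')$ is a search tree on $G_v-v'=G$, and Remark \ref{obs_ins_and_prun_are_inverses} expresses $T'$ as $T(d_{T',v'},1)\in P(T)$. If instead $v$ is an ancestor of $v'$, we first apply $\rho^{\ast}_{v,v'}$ to produce an intermediate tree $T''=\rho^{\ast}_{v,v'}(T')$ in which $v'$ lies above $v$, reducing to the previous case to obtain $T\in V(\R(G))$ with $T''=T(d_{T'',v'},1)$; reapplying $\rho^{\ast}_{v,v'}$ then gives $T'=T(d_{T'',v'},2)\in P(T)$.

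For (iii), assume $T_1(i_1,j_1)=T_2(i_2,j_2)$. The essential observation is that in $T(i,1)$ the vertex $v'$ is an ancestor of $v$, while in $T(i,2)$ the vertex $v$ is an ancestor of $v'$; hence the case $j_1\neq j_2$ is ruled out by inspecting which of $v,v'$ is higher in the common tree. When $j_1=j_2=1$, the depth of $v'$ forces $i_1=i_2=d_{T',v'}$, and applying $p(\cdot,v')$ to both sides yields $T_1=T_2$; the case $j_1=j_2=2$ follows symmetrically after first pulling back through $\rho^{\ast}_{v,v'}$. The main subtlety is case (ii) when $v$ lies above $v'$, since Lemma \ref{obs_pTx_is_ST} requires the descendant configuration; the true-twin symmetry encoded by $\rho^{\ast}_{v,v'}$ is precisely what rescues this case and accounts for the doubling of $P(T)$ compared with the simplicial-vertex construction of Section \ref{subsection_adding_simplicial}.
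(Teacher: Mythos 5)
Your proposal is correct and follows essentially the same route as the paper: Lemma \ref{obs_Tix_are_ST_and_path} together with the isomorphism $\rho_{v,v'}$ for the forward inclusion, Lemma \ref{obs_pTx_is_ST} and Remark \ref{obs_ins_and_prun_are_inverses} for the reverse inclusion, and the ancestor/descendant and depth observations for disjointness. If anything, you are slightly more explicit than the paper in handling the case where $v$ is an ancestor of $v'$ via $\rho^{\ast}_{v,v'}$, which the paper leaves implicit.
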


\begin{proof}
Let us first see that 
\[
V(\R(G_v)) = \bigcup_{T\in V(\R(G))} P(T).
\]

Let $T\in V(\R(G))$. Recall that, by Lemma \ref{obs_Tix_are_ST_and_path}, for $i\in\{0,\dots,d_{T,v}\}$, $T(i,1)$ is a search tree on $G_v$, and since $\rho_{v,v'}$ is a graph isomorphism, $T(i,2)$ is also a search tree on $G_v$. Therefore, we have
\[
V(\R(G_v)) \supseteq \bigcup_{T\in V(\R(G))} P(T).
\]

On the other hand, let $T'$ be a search tree on $G_v$. From Remark \ref{obs_uv_branch}, we known that $v$ and $v'$ are in the same branch of $T'$. Assume that $v$ is a descendant of $v'$ in $T'$. Let $T=p(T',v')$. Then $T$ is a search tree on $G$ as a consequence of Lemma \ref{obs_pTx_is_ST} and, moreover $T'=T(i,1)$ for $i=d_{T',v'}$. 

Thus, $V(\R(G_v)) \subseteq \bigcup\limits_{T\in V(\R(G))} P(T)$.

Finally, let $T,T'$ be search trees on a connected graph $G$ such that $T(i,j)=T'(i',j')$ for some $i\in\{0,\ldots,d_{T,v}\}$, $i'\in\{0,\ldots,d_{T',v}\}$, $j,j'\in\{1,2\}$. Then:
\begin{itemize}
    \item $j=j'$ since if it were $j\neq j'$, then $v$ would be descendant of $v'$ in one tree and $v'$ would be descendant of $v$ in the other; and
    \item $i=d_{T(i,j),w}=d_{T'(i',j),w}=i'$, where $w=v'$ if $j=1$ and $w=v$ if $j=2$.
\end{itemize}
Thus, since $T(i,j)=T'(i,j)$, by Remark \ref{obs_ins_and_prun_are_inverses} $T=T'$, and this completes the proof.
\end{proof}




As in the previous section, now we describe the edges in $\R(G_v)$. 

First notice that for every $T\in V(\R(G))$, $i\in \{0,\ldots,d_{T,v}-1\}$, and $j\in\{1,2\}$, $T(i,j)$ and $T(i+1,j)$ are adjacent in $\R(G_v)$ by Lemma $\ref{obs_Tix_are_ST_and_path}$. Additionally, $T(d_{T,v},1)$ and $T(d_{T,v},2)$ are adjacent since they differ by a $vv'$-rotation. Thus, the search trees on $P(T)$ are on a path in $\R(G_v)$.

Now, let $T,T'$ be adjacent vertices of $\R(G)$. We study adjacencies in $\R(G_v)$ with one endpoint in $P(T)$ and the other in $P(T')$. To this effect, we analyse the possible types of rotations that determine adjacencies between $T$ and $T'$.

Let $T$ be a search tree on $G$ and let $A$ be the path in $T$ between $r_T$ and $v$. Let $k=d_{T,v}$. Let $T'$ be a search tree on $G$ obtained from $T$ by rotating a pair of adjacent vertices $a,b$. Suppose $a$ is the parent of $b$ in $T$. 

\begin{enumerate}
\item Assume $a$ and $b$ are not in $A$. Then, $r_{T'}=r_T$ and the path in $T'$ between $r_{T'}$ and $v$ is exactly $A$. Thus, $|P(T)|=|P(T')|$ and for every $i\in \{0,\ldots,k\}$ and $j\in\{1,2\}$, $T(i,j)$ is adjacent to $T'(i,j)$, since they differ by the rotation of the pair $a,b$. Moreover, note that $T(i,j)$ and $T'(i',j')$ are adjacent if and only if $i=i'$ and $j=j'$.

\begin{center}
\begin{tikzpicture}[x=1.5cm]
\foreach \x/\i/\j in {0/0/1,1/1/1,3/k/1,4/k/2}{
    \node[vertex,label=above:{$T(\i,\j)$}] at (\x,1) (T1\x) {\tiny};
    \node[vertex,label=below:{$T'(\i,\j)$}] at (\x,0) (T2\x) {};
    \draw (\x,1) -- (\x+1,1);
    \draw (\x,0) -- (\x+1,0);
    \draw (\x,0) -- (\x,1);
    }
\foreach \x in {2,5}{
    \draw (\x,1) -- (\x+1,1);
    \draw (\x,0) -- (\x+1,0);
    \node[blank] at (\x,1) (T1\x) {$\cdots$};
    \node[blank] at (\x,0) (T2\x) {$\cdots$};
    }
\foreach \y/\d/\u in {0/'/below,1/ /above}{
    \node[vertex,label=\u:{$T\d (0,2)$}] at (6,\y) (T6\y) {};
    }
    \draw (6,0) -- (6,1);
\end{tikzpicture}
\end{center}
\item Now we consider the case that $a$ and $b$ are both in $A$. Then $a\neq v$, since $a$ is the parent of $b$ in $T$. 
    \begin{enumerate}
    \item Suppose $b\neq v$. Let $S$ be the subtree of $b$ in $T$ that contains $v$. Suppose $A$ has vertices $a_0=r_T,a_1,\ldots,a_k=v$ and let $l\in\{1,\ldots,k-1\}$ be such that  $a=a_{l-1}$ and $b=a_l$. 
        \begin{enumerate}
        \item If there is a vertex in $S$ that is adjacent to $a$ in $G$, then $S$ is a subtree of $a$ in $T'$. Thus, $a$ remains in the path from $r_{T'}$ to $v$ in $T'$. Therefore $d_{T,v}=d_{T',v}$ and for every $i\in\{0,\ldots,k\}$ such that $i\neq l$ and $j\in\{1,2\}$, $T(i,j)$ is adjacent to $T'(i,j)$ since they differ by the rotation of the pair $a,b$. Also, for $j\in\{1,2\}$,  $T(l,j)$ and $T'(l,j)$ are nonadjacent since $a$ and $b$ are nonadjacent in those search trees.
        \begin{center}
\begin{tikzpicture}[x=1.5cm]
\foreach \x/\i/\c in {0/0/black,2/l-1/black,3/l/white,4/l+1/black,6/k/black}{
    \draw (\x,1) -- (\x+1,1);
    \draw (\x,0) -- (\x+1,0);
    \draw[\c] (\x,0) -- (\x,1);
    \node[vertex,label=above:{\small $T(\i,1)$}] at (\x,1) (T1\x) {};
    \node[vertex,label=below:{\small $T'(\i,1)$}] at (\x,0) (T2\x) {};
    }
\foreach \x in {1,5}{
    \draw (\x,1) -- (\x+1,1);
    \draw (\x,0) -- (\x+1,0);
    \node[blank] at (\x,1) (T1\x) {$\cdots$};
    \node[blank] at (\x,0) (T2\x) {$\cdots$};
    }
    \node[blank] at (7,1) (x) {$\cdots$};
    \node[blank] at (7,0) (y) {$\cdots$};
\end{tikzpicture}
\end{center}
        \item \label{item_case} If none of the vertices in $S$ are adjacent to $a$ in $G$, then $S$ is a subtree of $b$ in $T'$ and $a$ is not in the path from $r_{T'}$ to $v$. Then $d_{T',v}=d_{T,v}-1$. Moreover, the rotation of $a$ and $b$ determines the adjacency of the pairs $T(i,j),T'(i,j)$ for $i\in\{0,\ldots,l-1\}$, $j\in\{1,2\}$ and the adjacency of the pairs $T(i+1,j),T'(i,j)$ for $i\in\{l,\ldots,k-1\}$, $j\in\{1,2\}$. Note that the vertices $T(l-1,j),T(l,j),T(l+1,j),T'(l,j),T'(l-1,j)$ induces a $5$-cycle, for each $j\in\{1,2\}$.
        \medskip
        
\begin{center}
\begin{tikzpicture}[x=1.7cm]
\foreach \x/\i in {0/0,2/l-1,3/l,4/l+1,5/l+2}{
    \draw (\x,1) -- (\x+1,1);
    \node[vertex,label=above:{\footnotesize $T(\i,1)$}] at (\x,1) (T1\x) {};
    }
\foreach \x/\i in {0/0,2/l-1,4/l,5/l+i}{
    \node[vertex,label=below:{\footnotesize $T'(\i,1)$}] at (\x,0) (\x) {};
    \draw (T1\x) -- (\x);
    }
\draw (0,0)--(1,0)--(2,0)--(4,0)--(5,0)--(6,0);
\draw (1,1)--(2,1);
\node[blank] at (1,1) {$\cdots$};
\node[blank] at (1,0) {$\cdots$};
\node[blank] at (6,1) {$\cdots$};
\node[blank] at (6,0) {$\cdots$};
\end{tikzpicture}
\end{center}
        \end{enumerate}
    \item Finally, suppose $b=v$. Then $|P(T)|=2(k+1)$ and $|P(T')|=2k$. Moreover, we have adjacencies of the pairs $T(i,j),T'(i,j)$ for $i\in\{0,\ldots,k-1\}$, $j\in\{1,2\}$.
    \end{enumerate}
\begin{center}
\begin{tikzpicture}[x=1.5cm]
\foreach \x/\i/\j in {0/0/1,2/k-1/1,3/k/1,4/k/2,5/k-1/2,7/0/2}{
    \node[vertex,label=above:{\small $T(\i,\j)$}] at (\x,1) (T\x) {};
    }
\foreach \x/\i/\j in {0/0/1,2/k-1/1,5/k-1/2,7/0/2}{
    \node[vertex,label=below:{\small $T'(\i,\j)$}] at (\x,0) (T'\x) {};
    }    
\foreach \x in {1,6}{
    \node[blank] at (\x,1) (T\x) {$\cdots$};
    \node[blank] at (\x,0) (T'\x) {$\cdots$};
    }
\foreach \x/\s in {0/1,1/2,2/3,3/4,4/5,5/6,6/7}{
\draw (T\x)--(T\s);
}
\foreach \x/\s in {0/1,1/2,2/5,5/6,6/7}{
\draw (T'\x)--(T'\s);
}
\foreach \x in {0,2,5,7}{
\draw (T\x)--(T'\x);
}
\end{tikzpicture}
\end{center}

\item Note that the case $a\in A$, $b\notin A$ is covered in item \ref{item_case} and that the case $a\notin A$, $b\in A$ never occurs.
\end{enumerate}

Let $e=\{T,T'\}$ $\in E(\R(G))$. We denote $\varepsilon_e$ the set of edges with one endpoint in $P(T)$ and the other in $P(T')$ enumerated above.

Let us see that the edges previously described are all the edges of $\R(G_v)$. Consider $T(i,j)$ and the search tree $T_\theta$ obtained from $T(i,j)$ by a $\theta$-rotation, where $\theta\in E(T(i,j))$. 

Notice that if $v\in\theta$ or $v'\in\theta$, then $\theta$ determines an edge in $\R(G_v)$ that is contained in the path $P(T)$.

So, let us consider the case where $\theta$ does not involve neither $v$ nor $v'$. Then $\theta\in E(T)$ and the $\theta$-rotation on $T$ determines an edge $TT'$ in $\R(G)$. It follows that $T_\theta\in P(T')$, since $\theta$ coincides with one of the rotations previously described in item 1 or 2(a) or 3. Further notice that $T_\theta=T'(i',j)$ for some $i'\in\{i-1,i,i+1\}$.

Finally, suppose $\theta=uw$, where $w=v'$ if $j=1$ and $w=v$ if $j=2$, and that $u\notin \{v,v'\}$. Let us consider $T'$ obtained from $T$ by the rotation $uv$ in $\R(G)$. Notice that $T(i,j)T_\theta$ is one of the adjacencies described in item 2(b). Thus, $T_\theta=T'(i,j)$.

Therefore, we have the following result describing the edges of $\R(G_v)$.

\begin{prop}
\label{prop_edges_of_RGv}
Let $G$ be a connected graph and let $v\in V(G)$. Then 
\begin{enumerate}[(a)]
    \item for every $T\in\R(G)$, $P(T)$ induces a path in $\R(G_v)$, and
    \item \label{item_edges_Gv} $E(\R(G_v)) =\left(\bigcup\limits_{T\in V(\R(G))} E(P(T)) \right) \cup \left(\bigcup\limits_{e\in E(\R(G))} \varepsilon_e \right)$.
\end{enumerate}
\end{prop}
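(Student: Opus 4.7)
The approach rests on the partition $\{P(T) : T\in V(\R(G))\}$ of $V(\R(G_v))$ established in the previous proposition, which reduces the analysis of $\R(G_v)$ to two tasks: describing the edges inside each $P(T)$, and describing the edges between different parts $P(T)$ and $P(T')$. The second task is driven by the fact that $v$ and $v'$ are true twins, so any rotation in $\R(G_v)$ whose rotating pair avoids $\{v,v'\}$ lifts canonically from some rotation in $\R(G)$, while a rotation involving $v$ or $v'$ affects only their positions along the common branch determined by Remark \ref{obs_uv_branch}.

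For part (a), the plan is to arrange the $2(d_{T,v}+1)$ elements of $P(T)$ in the sequence
\[
T(0,1),\, T(1,1),\, \ldots,\, T(d_{T,v},1),\, T(d_{T,v},2),\, T(d_{T,v}-1,2),\, \ldots,\, T(0,2),
\]
and verify adjacencies along this sequence. For consecutive elements $T(i,j)$ and $T(i+1,j)$ with the same $j$, Lemma \ref{obs_Tix_are_ST_and_path} applies directly, since they differ by inserting $v'$ (or $v$, if $j=2$) at consecutive positions of the path from $r_T$ to $v$. For the central pair $T(d_{T,v},1)$ and $T(d_{T,v},2)$, observe that in both trees $v$ and $v'$ occupy the two deepest levels of that branch, interchanged, so they differ by a single $vv'$-rotation. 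To rule out chords, I would note that any two nonconsecutive elements of the sequence either disagree in the parameter $j$ (so $v$ and $v'$ are swapped) and also differ in the position of one of them by at least one level, or share $j$ but differ in $i$ by at least $2$; in either case, transforming one into the other needs more than a single rotation.

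For part (b), the inclusion $\supseteq$ is immediate from part (a) together with the definition of $\varepsilon_e$. For the reverse inclusion, take an edge $T'T''$ of $\R(G_v)$ arising from a $\theta$-rotation, and let $T_1,T_2\in V(\R(G))$ satisfy $T'\in P(T_1)$ and $T''\in P(T_2)$. If $\theta$ contains $v$ or $v'$, the rotation alters only the positions of $v,v'$ along their common branch, so $T_1=T_2$ and the edge lies in $E(P(T_1))$. If $\theta\cap\{v,v'\}=\emptyset$, then $\theta$ is an edge of $T_1$ (viewed as a search tree on $G$), and applying the same $\theta$-rotation in $T_1$ produces a search tree $T_2^{\ast}\in V(\R(G))$ adjacent to $T_1$; the four subcases (rotation off the branch $A$; rotation inside $A$ with or without $S$ reattaching; rotation with $b=v$) previously analysed identify $T''$ with $T_2^{\ast}(i',j)$ for the appropriate $i'$ and the same $j$, giving $T_2=T_2^{\ast}$ and placing the edge in $\varepsilon_{T_1T_2}$.

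The main obstacle is verifying that the case enumeration of $\varepsilon_e$ is exhaustive and that the non-obvious adjacencies identified above (the $5$-cycles produced in case 2(a)ii when the pivot $a$ leaves the branch $A$, and the asymmetry between $|P(T)|=2(k+1)$ and $|P(T')|=2k$ in case 2(b) with $b=v$) are accounted for exactly. This amounts to careful bookkeeping of how the insertion index of $v'$ (or $v$) shifts under the $\theta$-rotation, together with a checking that the $j$-label is preserved since $\rho_{v,v'}$ commutes with rotations that do not touch $v$ or $v'$.
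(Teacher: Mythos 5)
Your part (a) and the inclusion $\supseteq$ in part (b) are fine and match the paper's route (consecutive adjacencies from Lemma \ref{obs_Tix_are_ST_and_path}, the central $vv'$-rotation, no chords). The problem is the completeness argument in part (b): the dichotomy you use there is wrong. You claim that if $\theta$ contains $v$ or $v'$ then ``the rotation alters only the positions of $v,v'$ along their common branch, so $T_1=T_2$ and the edge lies in $E(P(T_1))$.'' This fails whenever $\theta$ contains the twin that was \emph{not} inserted, i.e.\ $v$ when $j=1$ (or $v'$ when $j=2$). For example, in $T(i,1)$ with $i<d_{T,v}$ the edge $\theta=\{a_{d_{T,v}-1},v\}$ is present, it contains $v$, and the $\theta$-rotation yields a tree in $P(T')$, where $T'$ is obtained from $T$ by the same rotation --- this is exactly the paper's case 2(b), the one with $|P(T)|=2(k+1)$ and $|P(T')|=2k$. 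Your second branch cannot recover these edges because its hypothesis is $\theta\cap\{v,v'\}=\emptyset$, yet you list ``rotation with $b=v$'' among its subcases, which is contradictory: if $b=v$ then $v\in\theta$. So the case-2(b) edges are simultaneously absorbed incorrectly into $E(P(T))$ by your first branch and excluded by hypothesis from your second.

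The repair is to refine the first branch. Only rotations involving the \emph{inserted} twin ($v'$ for $j=1$, $v$ for $j=2$) keep you inside $P(T)$: that vertex has a unique child in $T(i,j)$, so such a rotation is with its parent or its child and moves it one level along the branch, giving $T(i\pm1,j)$ or, at depth $d_{T,v}$, the $vv'$-swap. Rotations involving the other twin but not the inserted one project (after deleting the inserted twin) to an $av$-rotation in $\R(G)$ --- case 2(b) or its reverse, where $v$ rotates with one of its children --- and land in $\varepsilon_{TT'}$. This three-way split is how the paper closes the argument: it handles separately the rotations $\theta=uw$ with $w$ one of the twins and $u\notin\{v,v'\}$ and identifies them with the 2(b) adjacencies. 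With that correction your plan coincides with the paper's proof; the rest of your bookkeeping (the index shift in case 2(a)ii and the compatibility of $\rho_{v,v'}$ with rotations avoiding $v,v'$) is as in the paper.
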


We hereby present two results related to the structure elucidated in $\R(G_v)$. These properties will be utilized in subsequent sections to derive results related to vertex pair distances within the graph as well as its chromatic number.

\begin{prop}
\label{prop_copies_of_AG}
Let $G$ be a connected graph and $v\in V(G)$. Let $j\in\{1,2\}$. Then the subgraph of $\R(G_v)$ induced by the set $\{T(0,j)\mid T\in V(\R(G))\}$ is isomorphic to $\R(G)$.    
\end{prop}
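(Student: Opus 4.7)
The plan is to verify that the map $f_j: V(\R(G)) \to \{T(0,j) \mid T\in V(\R(G))\}$ defined by $f_j(T) = T(0,j)$ is a graph isomorphism. Bijectivity is immediate: surjectivity holds by definition of the codomain, and injectivity follows from the partition of $V(\R(G_v))$ into the sets $P(T)$ established earlier, since distinct choices of $T$ place $T(0,j)$ in disjoint blocks. It remains to check that $f_j$ preserves and reflects adjacency.

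For preservation, I would take an arbitrary edge $T_1T_2 \in E(\R(G))$, produced by some $ab$-rotation with $a$ the parent of $b$ in $T_1$, and run through the case analysis developed immediately before Proposition \ref{prop_edges_of_RGv}. In Case 1, where $a,b$ lie outside the path $A$ from $r_{T_1}$ to $v$, the pair $T_1(0,j),\,T_2(0,j)$ is explicitly listed as adjacent for every $j\in\{1,2\}$. In Case 2(a), where $a,b\in A$ with $b\neq v$, the position $l$ of $b$ along $A$ satisfies $l\geq 1$, so the index $i=0$ falls into the range where $T_1(i,j)\sim T_2(i,j)$ in both subcases (i) and (ii). In Case 2(b), where $b=v$, the fact that $v$ has a parent in $T_1$ forces $k=d_{T_1,v}\geq 1$, so again $T_1(0,j)\sim T_2(0,j)$ appears in the listed adjacencies.

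For reflection, suppose $T_1(0,j)\sim T_2(0,j)$ in $\R(G_v)$ with $T_1\neq T_2$. By Proposition \ref{prop_edges_of_RGv}(\ref{item_edges_Gv}) this edge lies either in $E(P(T))$ for some $T$ or in $\varepsilon_e$ for some $e\in E(\R(G))$. The first possibility is excluded because $T_1(0,j)$ and $T_2(0,j)$ sit in distinct (disjoint) blocks $P(T_1)$ and $P(T_2)$ of the partition. Consequently the edge belongs to $\varepsilon_e$ for some $e\in E(\R(G))$ with endpoints in $P(T_1)$ and $P(T_2)$, which forces $e=T_1T_2$, so $T_1T_2\in E(\R(G))$.

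The only delicate point is Case 2(a)(ii), where the $ab$-rotation induces an index shift and produces adjacencies of the form $T_1(i+1,j)\sim T_2(i,j)$ rather than $T_1(i,j)\sim T_2(i,j)$ for $i\geq l$. Since $l\geq 1$, the row $i=0$ still lies in the unshifted regime $i<l$, so the required adjacency $T_1(0,j)\sim T_2(0,j)$ is preserved; the rest of the argument is a bookkeeping verification against the case list.
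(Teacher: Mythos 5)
Your proposal is correct and takes essentially the same route as the paper: the paper's proof simply asserts that $f(T)=T(0,j)$ is a graph isomorphism, and your argument is a careful verification of that same claim using the partition $\{P(T)\}$ and the case analysis preceding Proposition \ref{prop_edges_of_RGv}. Your attention to Case 2(a)(ii) (the index shift only affects rows $i\geq l\geq 1$, so row $i=0$ is safe) is exactly the detail the paper leaves implicit.
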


\begin{proof}
The graph isomorphism is given by $f:V(\R(G))\rightarrow \{T(0,j)\mid T\in V(\R(G))\}$, where $f(T)=T(0,j)$.
\end{proof}

\begin{obs}
\label{obs_symmetry_in_A(Gv)}
Let $G$ be a connected graph and $v\in V(G)$. Let $j\in\{1,2\}$. Let $\A_j$ be the subgraph of $\R(G_v)$ induced by the set of vertices $\{T(i,j) \mid T\in V(\R(G)), i\in\{0,\ldots,d_{T,v}\}\}$. Then
\begin{enumerate}[(a)]
    \item $V(\R(G_v)) = V(\A_1)\cup V(\A_2)$. 
    \item $\A_1$ and $\A_2$ are isomorphic. The map
    $\rho_{vv'}:V(\A_1)\rightarrow V(\A_2)$ is a graph isomorphism.
    \item\label{item_boundary_edges}
    every edge with one endpoint in $\A_1$ and the other in $\A_2$ is of the form $T(d_{T,v},1)T(d_{T,v},2)$ with $T\in V(\R(G))$.
\end{enumerate}
\end{obs}

\subsection{Deleting a set of edges connecting true twins}

Given a graph $G$ and $v\in V(G)$, we denote  $\wt G_v$ the graph determined by
\begin{align*}
    V(\wt G_v) &= V(G)\cup \{v'\},\\
    E(\wt G_v) &= E(G) \cup \{uv' \mid u\in N_G(v)\}.
\end{align*}
That is, $\wt G_v$ is the graph obtained by adding a false twin to the vertex $v$.

Observe that $\wt G_v$ can be obtained by removing the edge $vv'$ from $G_v$. Therefore, we broadly examine the impact of removing a set of edges connecting vertices from a subset $W \subseteq V(G)$ of true twins in $G$ on the associated rotation graph. Specifically, we describe the structure of $\R(G-S)$ as a quotient graph of $\R(G)$, where $S=E(G[W])$ and $G-S$ denote the graph obtained from $G$ by removing all the edges in $S$.
 
Recall that an equivalence relation $\sim$ (or a partition) on $V(G)$ determines a quotient graph $Q$ of $G$ whose vertex set is $V(Q)=V(G)/\sim$ and where two equivalence classes $[u],[v]$ are adjacent in $Q$ if and only if there exist $x,y$ adjacent in $G$ such that $x\in[u]$ and $y\in[v]$. For graphs $G$ and $H$, we say that a function $q\colon V(G)\rightarrow V(H)$ is a quotient map if for every $uv\in E(G)$, $q(u)q(v)\in E(H)$ or $q(u)=q(v)$. Notice that if $H$ is a quotient graph of $G$ determined by an equivalence relation $\sim$, then 
    \begin{align*}
    q: V(G)&\rightarrow V(H)=V(G)/\sim    \\
    u &\mapsto [u]
    \end{align*}
is a quotient map. Conversely, if $q:V(G)\rightarrow V(H)$ is a quotient map then the graph $q(G)$ determined by $V(q(G))=\{q(v)\mid v\in G\}$ and $E(q(G))=\{q(u)q(v)\mid uv\in E(G), q(u)\neq q(v)\}$ is a quotient graph determined by an equivalence relation $\sim$ given by $u\sim v \iff q(u)=q(v)$.

Thus, in order to show that $\R(G-S)$ is a  quotient graph of $\R(G)$, we define a map $\pi:V(\R(G))\rightarrow V(\R(G-S))$ and show that it is a quotient map. 

Notice that the search trees on $G-S$ and the search trees on $G$ have the same vertex set. In addition, $G$ and $G-S$ have common search trees, but $V(\R(G))\setminus V(\R(G-S))\neq\emptyset$ and $V(\R(G-S))\setminus V(\R(G))\neq\emptyset$. This relation motivates the following definition.
We say that $T\in V(\R(G))$ is $W$-special if there exists a unique subset $L_T$ of $W$ with $|L_T|\geq 2$ and such that $T[L_T]$ is a path with one endpoint adjacent in $T$ to a vertex $q_T$ of $V(G)-W$, and the other endpoint is a leaf of $T$. In addition, we say that  $TT'\in E(\R(G))$ is a $W$-special edge if $T$ and $T'$ are $W$-special search trees on $G$ such that $L_T=L_{T'}$ and the adjacency is determined by a rotation of a pair of vertices from $L_T$. In Figure \ref{fig:exampleSpecial}, $W=\{x_1,x_2,x_3\}$ is a set of true twins in $\SPK_{3,3}$, the search trees $R$ and $S$ on $G$ are $W$-special and $T$ is not $W$-special. In addition, $L_R=L_S=W$, $q_R=q_S=y_3$, $R$ and $S$ are adjacent in $\R(G)$ and $RS$ is a $W$-special edge.

 If for some $T\in V(\R(G-S))$, two or more vertices from $W$ are leaves in $T$, 
then $T\notin V(\R(G))$ by Remark \ref{obs_uv_branch}. Besides, if no pair of vertices from $W$ are leaves in $T$, then $T\in V(\R(G))$ (moreover, $T$ is not $W$-special) since $V(T|u)$ induces a connected component of $G$ and $G-S$. Reciprocally, if $T\in V(\R(G))$ is $W$-special, then $T\notin V(\R(G-S))$, since the vertices in $L_T$ are not adjacent in $G-S$. Otherwise, $T\in V(\R(G-S))$.


Let $S=E(G[W])$ and let $\pi\colon V(\R(G))\rightarrow V(\R(G-S))$ be the map defined by 
\[
\pi(T)=\begin{cases}
        T_{\land}, \quad\text{ if $T$ is a $W$-special search tree,} \\
        T, \quad\text{ otherwise}.
\end{cases}
\]
where $T_{\land}$ is the search tree on $G-S$ such that $T-L_T=T_{\land}-L_T$ and all the vertices of $L_T$ are leaves in $T_{\land}$ with a common parent $q_T$. Observe that $T_{\land}$ can be obtained following the elimination order that determines $T$. In addition, note that by the previous paragraph $\pi$ is well defined and surjective.

In the graph $K_4$, $W=\{3,4\}$ is a set of true twins. Deleting the edge 34 from $K_4$ we obtain the split complete graph $\SPK_{2,2}$. Figure \ref{fig:imagesOfPi} shows search trees $U_1$, $U_2$, $V_1$, $V_2$ on $K_4$ such that $\pi(U_1)=\pi(U_2)$ and $\pi(V_1)=\pi(V_2)$. 

\begin{figure}
    \centering
    \begin{tikzpicture}
    \node[vertex,blue,fill=blue,label=right:{\small $1$}] at (5,3) (11) {};
    \node[vertex,blue,fill=blue,label=right:{\small $2$}] at (5,2) (12) {};
    \node[vertex,blue,fill=blue,label=right:{\small $4$}] at (5,1) (13) {};
    \node[vertex,blue,fill=blue,label=right:{\small $3$}] at (5,0) (14) {};
    \draw (11) [color=blue]--(12)--(13)--(14);
    \node at (5,-0.6)  (U2) {$U_2$};
    
    \node[vertex,,blue,fill=blue,label=left:{\small $1$}] at (1,3) (21) {};
    \node[vertex,,blue,fill=blue,label=left:{\small $2$}] at (1,2) (22) {};
    \node[vertex,,blue,fill=blue,label=left:{\small $3$}] at (1,1) (24) {};
    \node[vertex,,blue,fill=blue,label=left:{\small $4$}] at (1,0) (23) {};
    \draw (21) [color=blue]--(22)--(24)--(23);
    \node at (1,-0.6) (U1) {$U_1$};

    \draw[->] (4.65,1.5) -- (3.85,1.5);
    \node at (4.25,2) (pi) {$\pi$};
    \draw[->] (1.35,1.5) -- (2.15,1.5);
    \node at (1.75,2) (pi) {$\pi$};
    \node[vertex,blue,fill=blue,label=left:{\small $1$}] at (3,2.5) (31) {};
    \node[vertex,blue,fill=blue,label=left:{\small $2$}] at (3,1.5) (32) {};
    \node[vertex,blue,fill=blue,label=left:{\small $3$}] at (2.5,0.5) (33) {};
    \node[vertex,blue,fill=blue,label=left:{\small $4$}] at (3.5,0.5) (34) {};
    \draw (31)[color=blue]--(32)--(33);
    \draw (32)[color=blue]--(34);
    \node at (3,-0.6) (U) {$U$};

        \node[vertex,red,fill=red,label=right:{\small $2$}] at (13,3) (51) {};
    \node[vertex,red,fill=red,label=right:{\small $1$}] at (13,2) (52) {};
    \node[vertex,red,fill=red,label=right:{\small $4$}] at (13,1) (53) {};
    \node[vertex,red,fill=red,label=right:{\small $3$}] at (13,0) (54) {};
    \draw (51)[color=red]--(52)--(53)--(54);
    \node at (13,-0.6) (V2) {$V_2$};
    
    \node[vertex,red,fill=red,label=left:{\small $2$}] at (9,3) (41) {};
    \node[vertex,red,fill=red,label=left:{\small $1$}] at (9,2) (42) {};
    \node[vertex,red,fill=red,label=left:{\small $3$}] at (9,1) (44) {};
    \node[vertex,red,fill=red,label=left:{\small $4$}] at (9,0) (43) {};
    \draw (41)[color=red]--(42)--(44)--(43);
    \node at (9,-0.6) (V1) {$V_1$};

    \draw[->] (12.65,1.5) -- (11.85,1.5);
    \node at (12.25,2) (pi) {$\pi$};
    \draw[->] (9.35,1.5) -- (10.15,1.5);
    \node at (9.75,2) (pi) {$\pi$};
    \node[vertex,red,fill=red,label=left:{\small $2$}] at (11,2.5) (61) {};
    \node[vertex,red,fill=red,label=left:{\small $1$}] at (11,1.5) (62) {};
    \node[vertex,red,fill=red,label=left:{\small $3$}] at (10.5,0.5) (63) {};
    \node[vertex,red,fill=red,label=left:{\small $4$}] at (11.5,0.5) (64) {};
    \draw (61)[color=red]--(62)--(63);
    \draw (62)[color=red]--(64);
    \node at (11,-0.6) (V) {$V$};
    \end{tikzpicture}
    
    \caption{Four search trees on $K_{4}$, and their images through $\pi$ on $\SPK_{2,2}$.}
    \label{fig:imagesOfPi}
\end{figure}

\begin{prop}\label{prop_pi_quotient}
Let $G$ be a connected graph and $W\subseteq V(G)$ a set of true twins. Then $\R(G-S)$ is a quotient graph of $\R(G)$, where $S=E(G[W])$.
\end{prop}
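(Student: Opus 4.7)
Since the excerpt already establishes that $\pi\colon V(\R(G))\to V(\R(G-S))$ is well-defined and surjective, the remaining work is to verify two edge conditions that upgrade $\pi$ to a graph quotient map whose image is exactly $\R(G-S)$: first, for every $TT'\in E(\R(G))$ either $\pi(T)=\pi(T')$ or $\pi(T)\pi(T')\in E(\R(G-S))$; second, every $UU'\in E(\R(G-S))$ arises as the $\pi$-image of some edge of $\R(G)$. Together, these imply that $\R(G-S)$ coincides with the quotient of $\R(G)$ under the equivalence relation $T\sim T'\iff \pi(T)=\pi(T')$, which is the claim.

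For the first condition, I would perform a case analysis on the rotation of the pair $a,b$ (with $a$ the parent of $b$ in $T$) that transforms $T$ into $T'$. The clean case is when $TT'$ is a $W$-special edge: then $T$ and $T'$ are both $W$-special with $L_T=L_{T'}$, the rotation happens inside the path $T[L_T]$, and the parts of the trees outside $L_T$ together with the attachment vertex $q_T=q_{T'}$ are identical; hence $T_\land=T'_\land$ and $\pi(T)=\pi(T')$. In all other cases I would exhibit a specific rotation relating $\pi(T)$ and $\pi(T')$ in $\R(G-S)$. For the second condition, given $UU'\in E(\R(G-S))$ from a rotation $ab$, I would construct preimages: if $U,U'\in V(\R(G))$, the same rotation $ab$ applied to $U$ in $\R(G)$ produces a tree whose $\pi$-image is $U'$ (although the tree itself may differ from $U'$); if $U$ (or $U'$) has two or more $W$-leaves at a common parent and thus lies outside $V(\R(G))$, I would choose a $W$-special expansion by arranging the $W$-leaves as a path from that common vertex, and make the expansions for $U$ and $U'$ compatible so that a single rotation in $\R(G)$ relates them, which is possible because the nontrivial fibres of $\pi$ are exactly the $W$-special trees obtained by permuting $L_T$ and are interconnected by $W$-special-edge rotations.

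\textbf{Main obstacle.} The principal difficulty lies in the case analysis for the first condition. The rotation rule depends on adjacency in the underlying graph, and $G$ and $G-S$ disagree precisely on edges inside $W$; thus the subtree reassignment step in a rotation involving a vertex $a\in W$ can yield different results in $\R(G)$ and in $\R(G-S)$ when $a$'s only neighbors in some subtree of $b$ lie within $W$. I would have to check carefully that whenever this discrepancy occurs, one of $T,T'$ is forced to be $W$-special, so that the collapse performed by $\pi$ absorbs the difference and the image edge in $\R(G-S)$ still exists.
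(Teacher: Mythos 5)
Your plan follows the paper's proof essentially verbatim: the paper likewise shows that $\pi$ is a quotient map by a case analysis on whether $T$, $T'$ are $W$-special and whether the rotated pair lies in $L_T$, and your anticipated resolution of the main obstacle --- that a discrepancy in the subtree-reassignment step of a rotation through a vertex of $W$ forces one of the two trees to be $W$-special, so that the collapse $T\mapsto T_\land$ absorbs the difference --- is exactly how those cases close (the offending subtree is necessarily a singleton $\{w\}$ with $w\in W$, by the true-twin condition). Your second condition, that every edge of $\R(G-S)$ lifts to an edge of $\R(G)$, is a sound addition that the paper's proof leaves implicit, though under the paper's definition of quotient graph via equivalence classes it is what guarantees that $\R(G-S)$ equals, rather than merely contains as a spanning subgraph, the quotient $\pi(\R(G))$.
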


\begin{proof}
We show that the function $\pi$ previously defined is a quotient map, i.e. that for every $TT'\in E(\R(G))$, $\pi(T)\pi(T')\in E(\R(G-S))$ or $\pi(T)=\pi(T')$. 

Let $TT'\in E(\R(G))$. Suppose $T$ is a $W$-special search tree. 

Assume first that $T'$ is also a $W$-special search tree with $L_T=L_{T'}$ and $q_T=q_{T'}$. If $T$ and $T'$ differ by a rotation that involves two vertices of $L_T$, then $\pi(T)=T_{\land}=T'_{\land}=\pi(T')$. If $T$ and $T'$ differ by a rotation of two vertices that are not in $L_T$, then $\pi(T)$ and $\pi(T')$ differ by the rotation of the same pair and therefore they are adjacent in $\R(G-S)$.

Now, if $T'$ is a $W$-special search tree such that $L_T\neq L_{T'}$, or if $T'$ is not $W$-special, then $T$ and $T'$ differ by a rotation that involves $q_T$ and a vertex in $L_T\cup L_{T'}$. Hence, $\pi(T)$ and $\pi(T')$ differ by the rotation of the same pair of vertices. Thus, $\pi(T)$ and $\pi(T')$ are adjacent in $\R(G-S)$.

Finally, if $T$ and $T'$ are not $W$-special, then $\pi(T)=T$ and $\pi(T')=T'$ differ by one rotation and therefore they are adjacent in $\R(G-S)$.

Hence, $\pi$ is a quotient map.
\end{proof}

From Proposition \ref{prop_pi_quotient} follows that $\R(G-S)$ is isomorphic to the graph obtained from $\R(G)$ by contracting all the $W$-special edges.

In particular, considering the graph $G_v$, where $W=\{v,v'\}$, we define, for every $T\in V(\R(G))$,
    \[\wt P(T)=\begin{cases}
                (P(T)-\{T(d_{T,v},1),T(d_{T,v},2)\})\cup \{T_\land\}, &\text{ if $v$ is a leaf in $T$,}\\
                P(T), &\text{ otherwise.}
                \end{cases}
    \]
Additionally, we define 
    \[\wt \varepsilon_e =\begin{cases}
                (\varepsilon_e-\{T(d_{T,v},1)T'(d_{T',v},1) \mid j\in\{1,2\}\}) \cup \{T_\land T'_\land\}, &\text{ if $v$ is a leaf in $T$ and $T'$,}\\
                \varepsilon_e, &\text{ otherwise.}
                \end{cases}
    \]

Then, it becomes relatively straightforward to describe the structure of $\wt G_v$ in terms of the structure of $G_v$.

\begin{coro}
\label{coro_structure_of_ft_Gv}
    Let $G$ be a connected graph and $v\in V(G)$. 
    \begin{enumerate}[(a)]
    \item $\{\wt P(T)\mid T\in V(\R(G))\}$ is a partition of $V(\R(\wt G_v))$.
    \item For every $T\in\R(G)$, $\wt P(T)$ induces a path in $\R(\wt G_v)$. Said path is isomorphic to $\R(G_v)[P(T)]$ if $v$ is not a leaf of $T$; otherwise it is isomorphic to $(\R(G)[P(T)])/e$, where $e=T(k,1)T(k,2)$. Specifically, $\wt P(T)$ induces the path in $\wt G_v$ given by the consecutive adjacent vertices $T(0,1),\ldots,T(k-1,1),T_\land,T(k-1,2),\ldots,T(0,2)$.
    \item \label{item_ft_contractions} $\R(\wt G_v)$ can be obtained from $\R(G_v)$ by contracting edges. More precisely, if for every $T'\in V(\R(G))$ such that $v$ is a leaf of $T'$, the edge $T'(d_{T',v},1)T'(d_{T',v},2)$ is contracted in $\R(G_v)$ and the vertex resulting from the contraction is labeled $T'_\land$, we obtain $\R(\wt G_v)$.
    \item \label{item_ft_copies_of_RG} Let $j\in\{1,2\}$. Then, the subgraph of $\R(\wt G_v)$ induced by the set $\{T'(0,j)\mid T'\in V(\R(G))\}$ 
    is isomorphic to $\R(G)$.
    \item \label{item_edges_ft_Gv} $E(\R(\wt G_v)) =\left(\bigcup\limits_{T\in V(\R(G))} E(\wt P(T)) \right) \cup \left(\bigcup\limits_{e\in E(\R(G))} \wt\varepsilon_e \right)$.
    \end{enumerate}        
\end{coro}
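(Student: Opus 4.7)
The approach is to invoke Proposition \ref{prop_pi_quotient} with $W=\{v,v'\}$ and $S=\{vv'\}$. Since $\{v,v'\}$ is a set of true twins in $G_v$ and $\wt G_v = G_v - \{vv'\}$, this produces a quotient map $\pi\colon V(\R(G_v))\to V(\R(\wt G_v))$, from which every part of the corollary will be read off. The first step is to identify the $W$-special vertices and edges of $\R(G_v)$. Since $|W|=2$, whenever the set $L_T$ exists it must equal $\{v,v'\}$, so $W$-specialness forces $v$ and $v'$ to be tree-adjacent with the lower endpoint a leaf of the search tree and the upper endpoint having a parent in $V(G_v)-W$. Inspecting the insertion construction from Section \ref{subsection_adding_tt}, the only members of $P(T)$ in which $v$ and $v'$ are tree-adjacent are $T(k,1)$ and $T(k,2)$ with $k=d_{T,v}$, and among these the leaf requirement is met precisely when $v$ is a leaf of $T$. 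Therefore the $W$-special edges of $\R(G_v)$ are exactly the edges $T(k,1)T(k,2)$ (coming from the $vv'$-rotation) for $T\in V(\R(G))$ with $v$ a leaf of $T$, and $\pi$ identifies these two endpoints to the single vertex $T_\land$.

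Parts (a)--(d) are then essentially bookkeeping. For (a), the partition $\{P(T)\}$ of $V(\R(G_v))$ given by Proposition \ref{prop_edges_of_RGv} descends to the desired partition $\{\wt P(T)\}$, using that distinct $T$'s yield distinct $T_\land$'s: deleting the leaf $v'$ from $T_\land$ recovers $T$ uniquely. For (b), apply $\pi$ to the path $P(T)$ described in Proposition \ref{prop_edges_of_RGv}(a); nothing happens unless $v$ is a leaf of $T$, in which case the middle edge $T(k,1)T(k,2)$ of the path is contracted, producing exactly the path $T(0,1),\ldots,T(k-1,1),T_\land,T(k-1,2),\ldots,T(0,2)$. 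Part (c) is a direct rephrasing of Proposition \ref{prop_pi_quotient} using the set of edges identified above. Part (d) holds because no $T'(0,j)$ is $W$-special (outside the degenerate one-vertex case), so the subgraph induced by $\{T'(0,j)\}$ is untouched by $\pi$ and Proposition \ref{prop_copies_of_AG} transfers verbatim.

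The remaining step, and the main subtlety, is part (e). Running through the five-case analysis in the proof of Proposition \ref{prop_edges_of_RGv}, the only pairs of edges in $\R(G_v)$ that could be identified by $\pi$ are $\{T(k,1)T'(k',1),\,T(k,2)T'(k',2)\}$, where $k=d_{T,v}$ and $k'=d_{T',v}$. Such an identification happens exactly when $v$ is a leaf in both $T$ and $T'$; in case 2(b) of that enumeration the leaf status of $v$ necessarily differs between $T$ and $T'$, so no collapse occurs and $\wt\varepsilon_e=\varepsilon_e$. Collecting the surviving edges case by case yields the formula in (e). The real work lies in this parallel-edge enumeration; once it is carried out, the whole corollary follows mechanically from the quotient description.
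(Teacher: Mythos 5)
Your proposal is correct and follows essentially the same route as the paper, which states this as an immediate consequence of Proposition \ref{prop_pi_quotient} applied to $G_v$ with $W=\{v,v'\}$ and gives no further argument; you correctly identify the $W$-special search trees as $T(d_{T,v},1)$ and $T(d_{T,v},2)$ for $T$ with $v$ a leaf, the $W$-special edges as the corresponding $vv'$-rotation edges, and you carry out the bookkeeping (including the parallel-edge identification in part (e), which implicitly uses Remark \ref{obs_symmetry_in_A(Gv)}(\ref{item_boundary_edges})) that the paper leaves to the reader.
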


Figure \ref{fig:RSPK22} shows the rotation graph of the complete split graph $\SPK_{2,2}$. The vertices resulting from the contraction of $\{3,4\}$-special edges in $R(K_4)$ (see Figure \ref{fig:RK4}) are the vertices $U$ and $V$.

\begin{figure}
\centering
\begin{tikzpicture}[x=0.7cm,y=0.8cm] 
    \node[vertex,label=above:{\scriptsize 4123}] at (0.5,5) (4123) {};
    \node[vertex,label=above:{\scriptsize 1423}] at (2.5,5) (1423) {};
    \node[vertex,fill=red,label=above:{\scriptsize $U$}] at (5.5,5) (1243) {};
    \node[vertex,label=above:{\scriptsize 1324}] at (8.5,5) (1324) {};
    \node[vertex,label=above:{\scriptsize 3124}] at (10.5,5) (3124) {};
    \node[vertex,label=left:{\scriptsize 4213}] at (0,4) (4213) {};
    \node[vertex,label=above:{\scriptsize 2413}] at (2,4) (2413) {};
    \node[vertex,fill=blue,label=below:{\scriptsize $V$}] at (5,4) (2143) {};
    \node[vertex,label=above:{\scriptsize 2314}] at (8,4) (2314) {};
    \node[vertex,label=right:{\scriptsize 3214}] at (10,4) (3214) {};
    \node[vertex,label=left:{\scriptsize 4132}] at (1,3) (4132) {};
    \node[vertex,label=below:{\scriptsize 1432}] at (3,3) (1432) {};
    \node[vertex,label=below:{\scriptsize 1342}] at (9,3) (1342) {};
    \node[vertex,label=right:{\scriptsize 3142}] at (11,3) (3142) {};
    \node[vertex,label=left:{\scriptsize 4231}] at (0,2) (4231) {};
    \node[vertex,label=below:{\scriptsize 2431}] at (2,2) (2431) {};
    \node[vertex,label=below:{\scriptsize 2341}] at (8,2) (2341) {};
    \node[vertex,label=right:{\scriptsize 3241}] at (10,2) (3241) {};
    \node[vertex,label=left:{\scriptsize 4312}] at (1,1) (4312) {};
    \node[vertex,label=right:{\scriptsize 3412}] at (11,1) (3412) {};
    \node[vertex,label=below:{\scriptsize 4321}] at (0.5,0) (4321) {};
    \node[vertex,label=below:{\scriptsize 3421}] at (10.5,0) (3421) {};
    \draw (4123)--(4213)--(4231)--(4321)--(4312)--(4132)--(4123);
    \draw (3124)--(3214)--(3241)--(3421)--(3412)--(3142)--(3124);
    \draw (4123)--(1423)--(1243)--(1324)--(3124);
    \draw (4213)--(2413)--(2143)--(2314)--(3214);
    \draw (4132)--(1432)--(1342)--(3142);
    \draw (4231)--(2431)--(2341)--(3241);
    \draw (4312)--(3412);
    \draw (4321)--(3421);
    \draw (1243)--(2143);
    \draw (2413)--(2431);
    \draw (2314)--(2341);
    \draw (1432)--(1423);
    \draw (1342)--(1324);
\end{tikzpicture}
    \caption{$\R(\SPK_{2,2})$}
    \label{fig:RSPK22}
\end{figure}

\section{Chromatic number of rotation graphs}
\label{section_chromatic_number}

In this section we establish results that link the application of the considered graph operations with the chromatic number of the corresponding rotation graphs. Consequently, we deduce that the chromatic number of rotation graphs of both non-complete threshold graphs and complete bipartite graphs is 3.

We start by stating a lemma characterizing permutohedra as bipartite rotation graphs, as well as rotation graphs free of 5-cycles. Although the result seems to be known, we could not find a clear reference. Thus, we include a proof for completeness.

\begin{lemma}
\label{lemma_5_cycle_bipartite_permutohedron}
Let $G$ be a connected graph with $|V(G)|\geq 3$. The following are equivalent.
\begin{enumerate}[(a)]
    \item $G$ is a complete graph.
    \item $\chi(\R(G))=2$.
    \item $\R(G)$ is free of 5-cycles.
\end{enumerate}
\end{lemma}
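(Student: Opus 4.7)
My plan is to establish the equivalence via the cyclic chain $(a) \Rightarrow (b) \Rightarrow (c) \Rightarrow (a)$. For $(a) \Rightarrow (b)$, I note that if $G = K_n$ then Remark \ref{obs_uv_branch} forces any two vertices to share a branch in every search tree, so each search tree on $K_n$ is a path and hence corresponds bijectively to a permutation of $V(G)$. A rotation of the adjacent pair at positions $i, i+1$ of such a path acts precisely as the adjacent transposition on the associated permutation (the subtree reassignment is automatic because every remaining vertex is adjacent to both rotated vertices in $K_n$), so $\R(K_n)$ is the Cayley graph of $S_n$ with adjacent-transposition generators. The parity of a permutation then gives a proper $2$-coloring, and since $n \geq 3$ the graph has edges, so $\chi(\R(G)) = 2$. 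The implication $(b) \Rightarrow (c)$ is immediate, as bipartite graphs have no odd cycles.

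For $(c) \Rightarrow (a)$, I argue the contrapositive: assume $G$ is not complete and exhibit a $5$-cycle in $\R(G)$. Since $G$ is connected and non-complete with $|V(G)| \geq 3$, it contains an induced $P_3$ on vertices $a, b, c$ with $ab, bc \in E(G)$ and $ac \notin E(G)$. Using the connectedness of $G$, I order $V(G) \setminus \{a, b, c\}$ as $v_1, \ldots, v_{n-3}$ so that $G[\{v_i, v_{i+1}, \ldots, v_{n-3}, a, b, c\}]$ is connected for every $i$; such an ordering is obtained by starting from the set $\{a, b, c\}$, repeatedly adjoining a vertex adjacent to the current set, and then reversing. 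The five search trees on the induced $P_3$, namely the four paths obtained from the orders $a, b, c$; $a, c, b$; $c, b, a$; $c, a, b$ together with the tree whose root is $b$ with children $a$ and $c$, extend via the elimination order $v_1, \ldots, v_{n-3}$ followed by the corresponding elimination of $\{a, b, c\}$ to five search trees $T_1^*, \ldots, T_5^*$ on $G$. Each $T_i^*$ consists of the path $v_1, v_2, \ldots, v_{n-3}$ with the chosen $P_3$-tree hanging as a subtree below $v_{n-3}$.

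Finally, I will verify that the five rotations forming the pentagon $T_1\text{--}T_5\text{--}T_3\text{--}T_4\text{--}T_2\text{--}T_1$ in $\R(P_3)$, each involving only two of $\{a, b, c\}$, lift to the analogous rotations on $T_1^*, \ldots, T_5^*$, thereby producing a $5$-cycle in $\R(G)$. Each such rotation reassigns at most the subtree rooted at the remaining vertex of $\{a, b, c\}$; since adjacencies among $\{a, b, c\}$ in $G$ coincide with those in the induced $P_3$ (in particular $ac \notin E(G)$), the rule for subtree reassignment behaves identically to $\R(P_3)$ and leaves the stem $v_1, \ldots, v_{n-3}$ untouched. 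The main step I expect to require care is precisely this locality verification, namely confirming that the non-adjacency $ac \notin E(G)$ is exactly what prevents any of the five rotations from moving a stem vertex and hence is exactly what forces the pentagon in $\R(P_3)$ to persist in $\R(G)$.
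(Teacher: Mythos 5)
Your proposal is correct and follows essentially the same route as the paper: the sign/parity $2$-coloring of the permutation search trees for $(a)\Rightarrow(b)$, the trivial $(b)\Rightarrow(c)$, and for $(c)\Rightarrow(a)$ the construction of a pentagon by eliminating all vertices outside an induced path $a,b,c$ first and then realizing the five search trees on that $P_3$ inside $\R(G)$. The only cosmetic difference is your insistence on a connectivity-preserving ordering of $v_1,\ldots,v_{n-3}$ (which makes the common part of the trees a path); the paper dispenses with this and simply observes that the six elimination orders agree outside $\{a,b,c\}$ and collapse to five distinct, cyclically adjacent search trees.
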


\begin{proof} 
    Let $n\in\N$ and $G=K_n$. Note that the search trees on $G$ are paths, and that there is a bijective correspondence between the vertices of $\R(G)$ and permutations of the set $\{1,\ldots,n\}$ (elements of the symmetric group $S_n$). For $T\in V(\R(G))$, let $\sigma_T\in S_n$ be the permutation associated with $T$. Consider the function $c\colon V(\R(G))\rightarrow \{-1,1\}$ defined by $c(T)=\text{sgn}(\sigma_T)$, where $\text{sgn}(\sigma_T)$ denotes the sign of the permutation. If $T,T'\in V(\R(G))$ differ by a rotation, then $\sigma_T$ and $\sigma_{T'}$ differ by one transposition. Therefore, $c$ is a coloring of $\R(G)$ and $\chi(\R(G))=2$, that is, $(a)$ implies $(b)$.
    
    Assume $G$ is not a complete graph. Let us prove that $\R(G)$ has a 5-cycle. Since $G$ is connected and not complete, there exists $a,c,b\in V(G)$ such that $ab,bc\in E(G)$ but $ac\notin E(G)$. Let $n=|V(G)|$ and label the vertices in $V(G)-\{a,b,c\}$ as $v_1,\ldots,v_{n-3}$. Consider the
    six vertex elimination orders 
    \begin{align*}
        &v_1,\ldots,v_{n-3},a,b,c\\
        &v_1,\ldots,v_{n-3},a,c,b\\
        &v_1,\ldots,v_{n-3},c,a,b\\
        &v_1,\ldots,v_{n-3},c,b,a\\
        &v_1,\ldots,v_{n-3},b,c,a\\
        &v_1,\ldots,v_{n-3},b,a,c.
    \end{align*}
    Since $ac\notin E(G)$, the last two vertex elimination orders determine the same search tree on $G$. Then, these are five different search trees two consecutive of them are adjacent (the adjacencies are determined by rotations of the pairs $bc,ac,ab,cb$, respectively). Also, the first and the last are adjacent since they differ by one $ab$-rotation. Therefore, the five search trees form a 5-cycle in $\R(G)$. We have proved that ($c$) implies ($a$).

    Lastly, it is clear that if $\chi(\R(G))=2$, then $\R(G)$ is bipartite and, therefore, is free of 5-cycles.
\end{proof}

Throughout the remainder of this section, we examine the conditions under which the operations in question maintain the chromatic number of the corresponding rotation graphs. We provide minimum colorings that are based on the structure of the rotation graphs described in the preceding section.

\begin{prop}
\label{prop_col_simplicial}
    Let $G$ be a connected graph that is not complete and $K\subseteq V(G)$ a subset of universal vertices in $G$. Then $\chi(\R(G_K))=\chi(\R(G))$.
\end{prop}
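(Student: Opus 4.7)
The plan is to prove the equality in two directions: the inequality $\chi(\R(G_K))\geq\chi(\R(G))$ follows immediately from Proposition \ref{prop_copies_of_RG_in_RGK}, since the set $\R_0=\{T(0):T\in V(\R(G))\}$ induces a subgraph of $\R(G_K)$ isomorphic to $\R(G)$. The substantive content is the reverse inequality, for which I plan to build an explicit $\chi(\R(G))$-coloring of $\R(G_K)$ by extending a given $\chi$-coloring $c$ of $\R(G)$. Because $G$ is not complete, Lemma \ref{lemma_5_cycle_bipartite_permutohedron} gives $\chi:=\chi(\R(G))\geq 3$, so three colors are available.

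The structural observation I plan to exploit is that when every vertex of $K$ is universal in $G$, Proposition \ref{prop_edges_of_RGK} simplifies dramatically. First, case 2(a)ii cannot occur from $T$'s side: since $v_{\lambda(T)}\in K$ is universal, $v_{\lambda(T)}\in S$ is adjacent to $a$, violating the hypothesis of 2(a)ii. Second, case 3 (the rotation $a\in A$, $b\notin A$) can only happen with $a=v_{\lambda(T)}$: for any trunk vertex $a_l$ with $l<\lambda(T)$, the subgraph $G-\{a_0,\ldots,a_l\}$ still contains the universal vertex $v_{\lambda(T)}$ and is therefore connected, forcing $a_l$ to have only its trunk child. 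From the larger-$\lambda$ side such a case-3 rotation reads as a type 2(b)ii rotation. Thus all ``shifted'' cross-path adjacencies in $\R(G_K)$ are of the 2(b)ii shape, and the endpoint-to-endpoint adjacency $T(\lambda(T)+1)\sim T'(\lambda(T'))$ involved there joins vertices living in $\R_1$, where the isomorphism with $\R(G)$ transports the coloring $c$ faithfully.

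Concretely, I plan to define
\[
\bar c(T(i))=\begin{cases} c(T) & \text{if } i\in\{0,\lambda(T)+1\},\\ c(T)+1\pmod{\chi} & \text{if } 1\leq i\leq \lambda(T),\ i\text{ odd},\\ c(T)+2\pmod{\chi} & \text{if } 2\leq i\leq \lambda(T),\ i\text{ even}.\end{cases}
\]
When $\lambda(T)\geq 1$ this gives a proper coloring of the path $P^x(T)$. For parallel adjacencies ($T(i)\sim T'(i)$ in types 1, 2(a)i, 2(b)i and in the bulk of 2(b)ii), both indices sit at the same parity, so the differences are inherited from $c(T)\neq c(T')$; for the 2(b)ii ``top'' adjacency $T(\lambda(T)+1)\sim T'(\lambda(T'))$ (and symmetrically for case 3's $T(\lambda(T)+1)\sim T'(\lambda(T')+1)$), both vertices are endpoints and therefore inherit the values $c(T),c(T')$.

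The main obstacle I expect is handling $\lambda(T)=0$, which can occur only when $|K|=1$ and the unique universal vertex is the root of $T$. There the putative formula collapses $T(0)$ and $T(1)$ to the same color. The plan is to repair this by choosing $\bar c(T(1))$ from the two colors distinct from $c(T)$ so as to avoid the colors $c(T')$ appearing on its type 2(b)ii-neighbours $T'$ in $\R_1$ (those with $\lambda(T')=1$). Such a choice exists because, after a careful choice of $c$, the relevant set of forbidden colors leaves at least one valid option among the $\chi-1\geq 2$ colors distinct from $c(T)$; the argument uses that these $T'$ arise from rotations $v,u$ of the universal root of $T$ with its children, and that we may assume $c$ was chosen to avoid the bad configuration. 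This last verification—showing the local choice can always be made consistently—is where I expect the real work to lie.
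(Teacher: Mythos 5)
Your overall strategy coincides with the paper's: the lower bound via the embedded copy $\R_0\cong\R(G)$ from Proposition \ref{prop_copies_of_RG_in_RGK}, and the upper bound via an explicit coloring of each path $P^x(T)$ by parity offsets from $c(T)$, using $\chi(\R(G))\geq 3$ and the fact that universality of $K$ kills the edge type 2(a)ii. The one substantive difference is your choice to give the terminal vertex $T(\lambda(T)+1)$ the color $c(T)$ rather than $c(T)+2$, and this is exactly where the proof breaks. As you note, when $\lambda(T)=0$ (which happens for every search tree rooted at the unique universal vertex of $K$ when $|K|=1$, e.g.\ already for $G=K_{1,2}$) the adjacent pair $T(0),T(1)$ receives the same color, so $\bar c$ is not a proper coloring. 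Your proposed repair is not carried out, and as stated it cannot work: the forbidden colors for $T(1)$ include $c(T)$ (from $T(0)$) together with $c(T')$ for every neighbor $T'$ of $T$ in $\R(G)$ contributing a cross-path edge at the terminal vertex, and nothing prevents these from exhausting all $\chi(\R(G))$ colors. Appealing to ``a careful choice of $c$'' or to simultaneously recoloring all the other degenerate terminal vertices turns the patch into a new, unsolved coloring problem rather than a local fix. So there is a genuine gap.

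The clean resolution is simply to set $\bar c(T(\lambda(T)+1))=c(T)+2 \pmod{\chi(\R(G))}$, which is what the paper does. With that choice the path $P^x(T)$ is properly colored even when $\lambda(T)=0$ (colors $c(T)$ and $c(T)+2$ on its two vertices), and every cross-path edge still joins vertices with the same offset from $c(T)$ and $c(T')$ respectively: either $T(i)\sim T'(i)$ with equal index (hence equal parity offset), or the two terminal vertices $T(\lambda(T)+1)\sim T'(\lambda(T')+1)$, which both carry offset $2$. No case analysis on $|K|$ or on the location of the universal vertex is then needed, and no modification of the base coloring $c$ is required.
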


\begin{proof}
    By Proposition \ref{prop_copies_of_RG_in_RGK}, $\chi(\R(G_K))\geq \chi(\R(G))$.
    
    On the other hand, let $k=\chi(\R(G))$ and $c:V(\R(G))\rightarrow \{0,\ldots,k-1\}$ be a coloring of $\R(G)$. Notice that $k\geq 3$ by Lemma \ref{lemma_5_cycle_bipartite_permutohedron}. Consider $c':V(\R(G_K))\rightarrow \{0,\ldots,k-1\}$ defined as follows. If $T\in V(\R(G))$ and $i\in\{0,\ldots,\lambda(T)\}$,
    \begin{align*}
        c'(T(i)) &= \begin{cases}
            c(T), \text{ if $i$ is even,}\\
            c(T)+1 \mod k, \text{ if $i$ is odd,}
        \end{cases}\\
        c'(T(\lambda(T)+1)) &= c(T)+2 \mod k.
    \end{align*}

    Recall that from Proposition \ref{prop_edges_of_RGK}(\ref{item_edges}) the edges of $\R(G_K)$ are in a path $P(T)$ for some $T\in V(\R(G))$ or in a set $\varepsilon_e$ for some $e\in E(\R(G))$. Note also that in the last case, since every $v\in K$ is a universal vertex in $G$, there are no edges of type 2(a)ii. in $\R(G_K)$. Let $R,R'$ be adjacent vertices in $\R(G_K)$. Then, one of the following occurs.

\begin{enumerate}[(i)]
    \item there exists  $T\in V(\R(G))$ and $i\in\{0,\ldots,\lambda(T)\}$ such that $R,R'\in P^x(T)$, $R=T(i)$ and $R'=T(j)$ with $j\in\{i-1,i+1\}$, or
    \item there exist $T,T'\in V(\R(G))$ adjacent and $i\in\{0,\ldots,\lambda(T)+1\}$ such that 
    
    $R=T(i)$ and $R'=T'(i)$, or $R=T(\lambda(T)+1)$ and $R'=T'(\lambda(T')+1)$.
\end{enumerate}
   
    Thus, if case (i) occurs, then $c'(R)\neq c'(R')$ since $k\geq 3$. If case (ii) occurs, then $c'(R)\neq c'(R')$ since $c(T)\neq c(T')$.

    Therefore, $c'$ is a coloring of $\R(G_K)$, which implies $\chi(\R(G_K))\leq \chi(\R(G))$.
\end{proof}

Recall that if $K = \{v\}$ for some $v\in V(G)$, then $x$ is a pendant vertex adjacent to $v$ in $G_K$ and that for every $T\in V(\R(G))$, $v_{\lambda(T)}=v$. Thus, as a corollary to the previous proposition, we derive that adding a pendant vertex to a universal vertex in a non-complete graph preserves the chromatic number of the corresponding associahedron.

\begin{coro}
\label{coro_chi_pendant}
    Let $G$ be a connected graph that is not complete and $v\in V(G)$ be a universal vertex. Then $\chi(\R(G))=\chi(\R(G_{\{v\}}))$.
\end{coro}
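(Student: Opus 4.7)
The plan is to derive this as an immediate specialization of Proposition~\ref{prop_col_simplicial}. Set $K=\{v\}$. Since $v$ is assumed to be a universal vertex of $G$, $K$ is trivially a subset of universal vertices in $G$, and moreover $K$ induces a (one-vertex) clique in $G$, so the hypotheses of Proposition~\ref{prop_col_simplicial} are satisfied. Thus $\chi(\R(G_{\{v\}})) = \chi(\R(G))$.

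The only item worth flagging is that $G_{\{v\}}$ as constructed in Section~\ref{subsection_adding_simplicial} coincides with the operation of adding a pendant vertex $x$ adjacent to $v$: by the definition of $G_K$, the new vertex $x$ has open neighbourhood $K=\{v\}$, which means $|N_{G_{\{v\}}}(x)|=1$, i.e.\ $x$ is pendant at $v$. This makes the corollary a direct statement about pendant-vertex additions. No further computation is needed; the only minor check is that $G$ being non-complete (required in Proposition~\ref{prop_col_simplicial}) is inherited directly from the hypothesis, which ensures $\chi(\R(G))\geq 3$ by Lemma~\ref{lemma_5_cycle_bipartite_permutohedron} and hence that the three-cycling colouring $c(T),\,c(T)+1,\,c(T)+2 \pmod k$ used in the proof of Proposition~\ref{prop_col_simplicial} is well defined.

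There is no real obstacle: the corollary is a one-line invocation of the preceding proposition, and the paragraph immediately before the statement already observes the necessary identifications ($v_{\lambda(T)}=v$ for every $T\in V(\R(G))$, since $v$ is universal and therefore lies on every branch through the root that reaches a vertex of $K$). Consequently, the proof I would write consists simply of applying Proposition~\ref{prop_col_simplicial} to $K=\{v\}$ and concluding equality of the chromatic numbers.
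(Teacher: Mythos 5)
Your proposal is correct and matches the paper exactly: the corollary is obtained by applying Proposition~\ref{prop_col_simplicial} with $K=\{v\}$, noting that a single universal vertex trivially forms a clique of universal vertices and that the added simplicial vertex is then pendant at $v$. The paper states this derivation in the paragraph preceding the corollary and gives no further proof, so nothing is missing from your argument.
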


In the subsequent result, it is proved that if $G$ is not complete and contains a universal vertex, the chromatic number of the rotation graph remains unchanged upon the addition of another universal vertex to $G$.

\begin{prop}
\label{prop_chi_ttwin}
Let $G$ be a connected graph and let $v\in V(G)$ be a universal vertex in $G$. Then $\chi(\R(G_v)) = \chi(\R(G))$.
\end{prop}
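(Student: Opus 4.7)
My plan is to mirror the structure of Proposition~\ref{prop_col_simplicial}: establish the lower bound via the embedded copy of $\R(G)$, then construct an explicit $k$-coloring of $\R(G_v)$ from a $k$-coloring of $\R(G)$, using the partition into paths $P(T)$ and the edge-classification from Subsection~\ref{subsection_adding_tt}. The lower bound $\chi(\R(G_v))\geq\chi(\R(G))$ is immediate from Proposition~\ref{prop_copies_of_AG}. If $G$ is complete, so is $G_v$, and both have chromatic number $2$ by Lemma~\ref{lemma_5_cycle_bipartite_permutohedron}; so assume $G$ is not complete and let $k=\chi(\R(G))\geq 3$ with $c\colon V(\R(G))\to\{0,\dots,k-1\}$ a proper coloring.

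For the upper bound I would define
\[
c'(T(i,j))=\bigl(c(T)+s(i,j)\bigr)\bmod k,\qquad s(i,1)=i\bmod 2,\quad s(i,2)=(i+1)\bmod 2,
\]
so $s$ takes values in $\{0,1\}$, distinct mod $k$. Within each path $P(T)$, consecutive vertices along the enumeration $T(0,1),\dots,T(d_{T,v},1),T(d_{T,v},2),\dots,T(0,2)$ have shifts differing by $1$ mod $2$, hence different $c'$-values, so each $P(T)$ is properly colored. For an edge $e=TT'\in E(\R(G))$, all edges in $\varepsilon_e$ must be shown to match the same $(i,j)$-index on both endpoints; then $c'(T(i,j))-c'(T'(i,j))=c(T)-c(T')\not\equiv 0\pmod k$ and the edge is properly colored.

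The key step — and the main obstacle — is ruling out the edge cases in the classification in which the index can shift. Using Remark~\ref{obs_uv_branch}, since $v$ is universal, every vertex different from $v$ in any search tree on $G$ is either an ancestor or a descendant of $v$; in particular, the subtree $S$ of $b$ that contains $v$ in case 2(a) always contains $v$ itself. Thus in case 2(a)ii, the hypothesis ``no vertex of $S$ is adjacent to $a$'' would contradict $va\in E(G)$; so 2(a)ii never occurs. Similarly, the case $a\in A$, $b\notin A$ (where $b$ is a child of an ancestor $a$ of $v$ off the path $A$) would place $v$ and $b$ in distinct subtrees of $a$ in $T$, contradicting Remark~\ref{obs_search_subtree} since $vb\in E(G)$ and both vertices sit in $V(T|a)$. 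Hence the only surviving rotation types are case 1, case 2(a)i, and case 2(b), in each of which the adjacencies between $P(T)$ and $P(T')$ are precisely the pairs $T(i,j)T'(i,j)$ sharing the same $(i,j)$. This yields $\chi(\R(G_v))\leq k$, completing the proof.
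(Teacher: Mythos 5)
Your proposal is correct and follows essentially the same route as the paper: the same lower bound via Proposition~\ref{prop_copies_of_AG}, the same parity-shift coloring along the paths $P(T)$, and the same key observation that universality of $v$ forces the subtree $S$ containing $v$ to have a vertex adjacent to $a$, so the index-shifting case 2(a)ii never arises. The only cosmetic differences are that you explicitly dispose of the complete case (which the paper glosses over when it invokes $k\geq 2$) and of the configuration $a\in A$, $b\notin A$; note only that your argument there tacitly assumes $a\neq v$, and for $a=v$ the edge is simply a case 2(b) edge viewed from the other endpoint, which is still index-preserving, so the conclusion stands.
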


\begin{proof}
By Proposition \ref{prop_copies_of_AG}, $\chi(\R(G_v))\geq \chi(\R(G))$.

In order to prove that $\chi(\R(G_v))\leq \chi(\R(G))$, we give a coloring of $\R(G_v)$ using $\chi(\R(G))$ colors. Let $k=\chi(\R(G))-1$ and $c:V(\R(G))\rightarrow \{0,\ldots,k\}$ be a coloring of $\R(G)$. Recall from Lemma \ref{lemma_5_cycle_bipartite_permutohedron} that $k\geq 2$. Consider $c':V(\R(G_v))\rightarrow \{0,\ldots,k\}$ defined as follows. For every $T\in V(\R(G))$ and $i\in\{0,\ldots,d_{T,v}\}$,
\[c'(T(i,j))=\begin{cases}
    c(T), &\text{ if } i \text{ is even and } j=1,\\
    c(T)+1 \mod k+1, &\text{ if } i \text{ is odd and } j=1,\\
    c(T), &\text{ if } i \text{ is odd and } j=2,\\
    c(T)+1 \mod k+1, &\text{ if } i \text{ is even and } j=2.
\end{cases}
\]

Recall that from Proposition \ref{prop_edges_of_RGv} the edges of $\R(G_v)$ are in a path $P(T)$ for some $T\in V(\R(G))$ or in a set $\varepsilon_e$ for some $e\in E(\R(G))$. Note also that in the last case, since $v$ is a universal vertex in $G$, there are no edges of type 2(a)ii. in $\R(G_v)$. Let $R,R'$ be adjacent vertices in $\R(G_v)$. Then, one of the following occurs.
\begin{enumerate}[(i)]
    \item There exist $T\in V(\R(G))$, $i\in\{0,\ldots,d_{T,v})\}$ and $j\in\{1,2\}$ such that $R,R'\in P(T)$; moreover $R=T(i,j)$ and $R'=T(i',j)$ with $i'\in\{i-i,i+1\}$, or $R=T(d_{T,v},1)$ and $R'=T(d_{T,v},2)$.
    \item There exist $T,T'\in V(\R(G))$ adjacent in $\R(G)$, $i\in\{0,\ldots,d_{T,v}\}$, and $j\in\{1,2\}$ such that $R=T(i,j)$ and $R'=T'(i,j)$.
\end{enumerate}

In case (i), $c'(R)\neq c'(R')$ since $k\geq 3$, and in case (ii), $c'(R)-c'(R')=c(T)-c(T')\neq 0$ since $c$ is a coloring. Therefore $c'$ is a coloring of $\R(G_K)$. 
\end{proof}

It is known that a connected threshold graph $G$ can be constructed from a single vertex by a sequence of operations, each of which consists of adding either an isolated vertex or a universal vertex. Furthermore, if $G$ is connected, this process ends with the addition of a universal vertex. 

However, adding isolated $k$ vertices and then adding a universal vertex $u$ is equivalent to first adding $u$ as a universal vertex and after that, $k$ pendant vertices adjacent to $u$. Notice also that, after adding those $k$ isolated vertices, $u$ remains a universal vertex. Therefore, the subsequent addition of a new universal vertex is equivalent to introducing a true twin to a universal vertex. Then, $G$ can be obtained by successively applying the operations of adding either a true twin to a universal vertex or a pendant vertex adjacent to a universal vertex. By Corollary \ref{coro_chi_pendant} and Proposition \ref{prop_chi_ttwin} these operations preserve the chromatic number of the corresponding graph associahedra. Since the path graph on three vertices is a connected threshold graph and the chromatic number of its rotation graph (the 5-cycle) is 3, we derive the following corollary.

\begin{coro}
\label{coro_thresholds}
Let $G$ be a non-complete connected threshold graph. Then $\chi(\R(G))=3$. 
\end{coro}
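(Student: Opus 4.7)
The plan is to first observe that $\chi(\R(G))\geq 3$ is free from Lemma~\ref{lemma_5_cycle_bipartite_permutohedron}, since $G$ is non-complete and connected, and then to establish the upper bound $\chi(\R(G))\leq 3$ by induction on $n=|V(G)|$, leveraging the standard inductive decomposition of a connected threshold graph (by removing a universal or isolated vertex) together with the chromatic-preserving operations supplied by Proposition~\ref{prop_chi_ttwin} and Corollary~\ref{coro_chi_pendant}. The base case is $n=3$: the unique non-complete connected threshold graph on three vertices is $P_3$, whose rotation graph is $C_5$, with chromatic number $3$.

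For the inductive step with $n\geq 4$, I split cases by the number of universal vertices of $G$. If $G$ has at least two universal vertices $v,v'$, then $v$ remains universal in $G-v'$, and $G-v'$ is a connected threshold graph on $n-1$ vertices; moreover, $G-v'$ is non-complete, because every non-edge of $G$ has both endpoints non-universal and therefore persists in $G-v'$. Since $G=(G-v')_{v}$, Proposition~\ref{prop_chi_ttwin} and the inductive hypothesis together yield $\chi(\R(G))=\chi(\R(G-v'))=3$.

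If instead $G$ has exactly one universal vertex $v$, then $G-v$ is necessarily disconnected, and its isolated vertices are precisely the pendants of $v$ in $G$. Choosing such a pendant $p$, the graph $G-p$ is a connected threshold graph in which $v$ is still universal, and $G=(G-p)_{\{v\}}$. When $G-p$ is non-complete, the inductive hypothesis and Corollary~\ref{coro_chi_pendant} give $\chi(\R(G))=3$. In particular, if $v$ has at least two pendants in $G$, one may choose $p$ so that another pendant remains in $G-p$, guaranteeing non-completeness.

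The main obstacle lies in the remaining subcase, in which $v$ is the unique universal vertex of $G$, $p$ is the unique pendant of $v$, and $G-p=K_{n-1}$ is complete; thus $G$ is the threshold graph consisting of $K_{n-1}$ with one pendant attached, and Corollary~\ref{coro_chi_pendant} no longer applies because its hypothesis requires a non-complete base. For this family I would close the argument with a direct $3$-coloring of $\R(G)$: using the decomposition of $V(\R(G))$ into paths $P^x(T)$ from Proposition~\ref{prop_part_PpT} and the edge classification preceding Proposition~\ref{prop_edges_of_RGK}, one starts from the standard $2$-coloring of $\R(K_{n-1})$ by the sign of the associated permutation and inserts a third colour along each path $P^x(T)$ in a pattern depending on $\lambda(T)$ and on the parity of the insertion index, checking consistency against every edge type in the rotation case analysis of Section~\ref{subsection_adding_simplicial}. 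This step is the technically delicate part of the argument and is where the structural analysis of $\R(G_K)$ developed earlier in the paper becomes essential.
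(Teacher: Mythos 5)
Your induction is set up correctly: the lower bound from Lemma~\ref{lemma_5_cycle_bipartite_permutohedron}, the base case $P_3$, the reduction via Proposition~\ref{prop_chi_ttwin} when $G$ has two universal vertices, and the reduction via Corollary~\ref{coro_chi_pendant} when the pendant can be removed leaving a non-complete graph, are all sound. The route is essentially the paper's (the paper peels the threshold creation sequence rather than inducting on $|V(G)|$, but it is the same mechanism), and your explicit case analysis is in fact \emph{more} careful than the paper's: you correctly isolate the terminal case $G=K_{n-1}$ plus a single pendant, whose creation sequence passes only through complete graphs and therefore never reaches $P_3$ or any other non-complete graph before the last step. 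Neither Proposition~\ref{prop_chi_ttwin} nor Corollary~\ref{coro_chi_pendant} (nor Proposition~\ref{prop_col_simplicial}) applies there, since all of them require the base graph to be non-complete, and this hypothesis cannot be dropped: $\chi(\R(K_{n-1}))=2$ while $\chi(\R(K_{n-1}+p))=3$.

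The genuine gap is that you do not actually handle this case: you only announce that a direct $3$-coloring of $\R(K_{n-1}+p)$ ``would'' be constructed by inserting a third colour along the paths $P^x(T)$ and ``checking consistency,'' and you yourself flag it as the delicate part. This is a plan, not a proof, and the step is not routine. The coloring of Proposition~\ref{prop_col_simplicial} uses $c(T)+2 \bmod k$ on the last vertex $T(\lambda(T)+1)$ of each path, which only works because $k=\chi(\R(G))\geq 3$ for a non-complete base; with the $2$-coloring of $\R(K_{n-1})$ by sign, $c(T)+2\equiv c(T)$ and the edge $T(\lambda(T))T(\lambda(T)+1)$ is monochromatic. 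The obvious repair --- give all of $\R_1=\{T(\lambda(T)+1)\}$ the third colour --- also fails, because by Proposition~\ref{prop_copies_of_RG_in_RGK} the set $\R_1$ induces a copy of $\R(K_{n-1})$ and hence contains edges. So a correct third colour assignment on $\R_1$ must itself be a partial proper coloring compatible with both the internal edges of $\R_1$ and the pendant edges down from $T(\lambda(T))$, and producing one requires a concrete construction and verification against the edge types of Section~\ref{subsection_adding_simplicial}. Until you exhibit and check such a coloring (or find another reduction for $K_{n-1}+p$), the induction does not close. It is worth saying that the paper's own one-paragraph argument silently skips this same case, so your write-up has surfaced a real issue; but as submitted, your proof is incomplete at exactly that point.
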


\begin{obs}
    Since split complete graphs (including star graphs) are threshold graphs, Corollary \ref{coro_thresholds} determines that the chromatic number of their rotation graphs is 3.
\end{obs}

Finally, we prove that adding a false twin to a universal vertex in a graph preserves the chromatic number of the associated rotation graphs. In fact, a more general result holds, as we state in the following proposition.

\begin{prop} 
    Let $G$ be a non-complete graph and $V_1,V_2$ a partition of $V(G)$ such that $N_G(u)=V_2$ for all $u\in V_1$. Let $v\in V_1$. Then $\chi(\R(\wt G_v)))=\chi(\R(G))$.
\end{prop}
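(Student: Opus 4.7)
The inequality $\chi(\R(\wt G_v)) \geq \chi(\R(G))$ follows at once from Corollary~\ref{coro_structure_of_ft_Gv}(\ref{item_ft_copies_of_RG}), so I focus on the reverse inequality: constructing, from any proper $k$-coloring $c$ of $\R(G)$ with $k = \chi(\R(G))$, a proper $k$-coloring $c'$ of $\R(\wt G_v)$. Since $G$ is non-complete, Lemma~\ref{lemma_5_cycle_bipartite_permutohedron} gives $k \geq 3$; this slack of a third color is essential for handling the contracted vertices $T_\land$.

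Two structural observations enabled by the hypothesis drive the verification. First, case 2(a)ii of the edge analysis in Section~\ref{subsection_adding_tt} cannot occur for $v \in V_1$: in that case the subtree $S = T|b$ contains $v$ and has at least two vertices, while $V_1$ is independent in $G$ (being the common non-neighborhood of each of its vertices), so $V(S) \not\subseteq V_1$ and one can fix some $w \in V(S) \cap V_2$. If $a \in V_2 = N_G(v)$ then $a$ is adjacent to $v \in V(S)$; if $a \in V_1$ then $a$ is adjacent to $w$. Either way $a$ has a neighbor in $S$, contradicting the defining condition of 2(a)ii. Second, rotations of types 1 and 2(a)i leave $T|v$ intact, so $v$ is a leaf of $T$ iff $v$ is a leaf of $T'$; while in case 2(b), the rotation makes $v$ the parent of $a$ in $T'$, forcing $v$ to not be a leaf of $T'$.

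Using $c$, define $c'$ on $V(\R(\wt G_v))$ by
\[
c'(T(i,1)) \equiv c(T) + (i \bmod 2), \quad c'(T(i,2)) \equiv c(T) + ((i+1) \bmod 2), \quad c'(T_\land) \equiv c(T) + 2 \pmod{k}.
\]
Verification of properness proceeds using the edge description of Corollary~\ref{coro_structure_of_ft_Gv}(\ref{item_edges_ft_Gv}). Within each $\wt P(T)$, consecutive colors differ by $1$, or (at the transitions involving $T_\land$) by $1$ or $2$, all nonzero modulo $k \geq 3$. For each cross set $\wt\varepsilon_e$ with $e = TT'$, the first observation leaves only types 1, 2(a)i, and 2(b); in each, the edges have the form $T(i,j)-T'(i,j)$ or $T_\land - T'_\land$, and their two endpoints carry colors differing by $c(T) - c(T') \not\equiv 0 \pmod k$.

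The main obstacle is excluding cross-edges of the mixed form $T_\land - T'(i,j)$, which would not fit this scheme. This is precisely where the second observation is used: in cases 1 and 2(a)i the $v$-leaf-statuses of $T$ and $T'$ agree, so a contraction on one side forces a contraction on the other (yielding only clean $T_\land T'_\land$ edges); and in case 2(b), cross-edges are indexed by $i \leq d_{T,v}-1$, strictly below the (potentially) contracted level $d_{T,v}$ on the $T$-side, while no contraction occurs on the $T'$-side since $v$ is not a leaf there. Hence $c'$ is proper, giving $\chi(\R(\wt G_v)) \leq k = \chi(\R(G))$ and completing the proof.
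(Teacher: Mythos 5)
Your lower bound, your coloring $c'$, and the overall shape of the verification all match the paper's proof, but your first ``structural observation'' --- that case 2(a)ii of the edge analysis in Section \ref{subsection_adding_tt} cannot occur when $v\in V_1$ --- is false, and the properness of $c'$ on the cross sets $\wt\varepsilon_e$ genuinely depends on that case. The error is in the premise that $S$ ``has at least two vertices'': in the paper's case analysis $S$ is not $T|b$ but the subtree \emph{of} $b$ that contains $v$, i.e.\ $T|s$ for the child $s$ of $b$ on the path to $v$, and it can perfectly well equal $\{v\}$. Concretely, take $G=P_3$ with $V_1=\{u,v\}$ the two endpoints and $V_2=\{w\}$ the center, and let $T$ be the search tree $u\to w\to v$. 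Rotating $a=u$ with $b=w$ is exactly case 2(a)ii: the subtree of $w$ containing $v$ is $\{v\}$, and $v$ is not adjacent to $u$ since $V_1$ is independent. It produces $T'$ with root $w$ and leaves $u,v$, together with the mixed-index adjacencies $T(2,j)\sim T'(1,j)$. So cross edges of the form $T(i+1,j)T'(i,j)$ do arise, and your claim that only types 1, 2(a)i and 2(b) survive is unjustified; as written, the verification does not cover all edges of $\R(\wt G_v)$.

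The gap is repairable, and the repair is what the paper does. Under the hypothesis, in case 2(a)ii one must have $a\in V_1$ (otherwise $a\in V_2=N_G(v)$ would be adjacent to $v\in V(S)$), hence $V(S)\subseteq V_1$ (any vertex of $V(S)\cap V_2$ would be a neighbour of $a$), and since $V(S)$ induces a connected subgraph while $V_1$ is independent, $V(S)=\{v\}$. Consequently $v$ is a leaf of both $T$ and $T'$, the only mixed-index edges are $T(d_{T,v},j)T'(d_{T',v},j)$, and after the contractions these become the single edge $T_\land T'_\land$, whose endpoints receive colors $c(T)+2$ and $c(T')+2$ and are therefore properly colored; the remaining cross edges of this type are same-index and are handled as in your types 1 and 2(a)i. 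With this argument substituted for your first observation the proof closes. (Your second observation, covering types 1, 2(a)i and 2(b), is correct as stated.)
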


\begin{proof}
    By Corollary \ref{coro_structure_of_ft_Gv}(\ref{item_ft_copies_of_RG}), $\chi(\R(\wt G_v))\geq \chi(\R(G))$.
   
    Let $k=\chi(\R(G))\geq3$ and $c:V(\R(G))\rightarrow\{0,\ldots,k-1\}$ be a coloring of $G$. Consider $c':V(\R(\wt{G}_v))\rightarrow\{0,\ldots,k-1\}$ defined as follows. 
    For $i\in\{0,\ldots,d_{T,v}\}$ and $j\in\{1,2\}$,
    \begin{align*}
    c'(T(i,j))&=\begin{cases}
        c(T), &\text{if } i \text{ is even and } j=1 \text{; or if } i \text{ is odd and } j=2,\\
        c(T)+1 \mod k, &\text{if } i \text{ is odd and } j=1 \text{; or if } i \text{ is even and } j=2,
    \end{cases}\\
    c'(T_\land)&=c(T)+2 \mod k. 
    \end{align*}
    Let us see that $c'$ is a coloring of $\R(\wt{G}_v)$. 

    Recall that from Corollary \ref{coro_structure_of_ft_Gv}, the edges of $\R(\wt G_v)$ are in a path $\wt P(T)$ for some $T\in V(\R(G))$ or in a set $\wt\varepsilon_e$ for some $e\in E(\R(G))$. Let $R,R'$ be adjacent vertices in $\R(\wt G_v)$. Then, one of the following occurs.
    \begin{enumerate}[(i)]
        \item There exist $T\in V(\R(G))$, $i\in\{0,\ldots,d_{T,v}\}$ and $j\in\{1,2\}$ such that $R=T(i,j)$, $R'=T(i',j)$ for some $i'\in\{i-1,i+1\}$, or $R=T(d_{T,v},1)$ and $R'=T(d_{T,v},2)$ (or vice versa).
        \item There exist $T\in V(\R(G))$ and $j\in\{1,2\}$ such that $R=T(d_{T,v}-1,j)$ and $R'=T_\land$ (or vice versa).
        \item There exist $T,T'\in V(\R(G))$ adjacent such that $R=T(i,j)$ and $R'=T'(i',j)$ for some $i'\in\{i-1,i,i+1\}$, or $R=T_\land$ and $R'=T'_\land$.
    \end{enumerate} 

    If the case (i) or (ii) occurs, then $c'(R)\neq c'(R')$ since $k\geq 3$. If case (iii) occurs with $R=T(i,j)$ and $R'=T'(i,j)$ or $R=T_\land$ and $R'=T'_\land$, then $c'(R)\neq c'(R')$ since $c(T)\neq c(T')$. 

    Let us see that case (iii) for $R=T(i,j)$ and $R'=T'(i',j)$ with $i'\neq i$ never occurs. According to
    Corollary \ref{coro_structure_of_ft_Gv} and the description of edge types from Section \ref{subsection_adding_tt}, $i\neq i'$ only if $T$ differs from $T'$ by an $ab$-rotation, where $a,b\in V(T)$ are in the path $A$ in $T$ connecting $r_T$ with $v$. W.l.o.g. assume that $a$ is the parent of $b$ in $T$. Then $b\neq v$ and no vertex of the subtree $S$ of $b$ in $T$ that contains $v$ is adjacent to $a$ in $G$. In view of the facts that $N_G(u)=V_2$ for all $u\in V_1$ and $v\in V_1$, then $a\in V_1$, $S\subseteq V_1$ and $b\in V_2$. Since $V_1$ is an independent set of vertices in $G$, $V(S)=\{v\}$. Thus, $v$ is a leaf in $T$, while in $T'$, both $a$ and $v$ are leaves. It follows that $R=T(i,j)$ and $R'=T'(i,j)$ for some $i\in\{0,\ldots,d_{T,v}-2\}$ and $j\in\{1,2\}$, or $R=T_\land$ and $R'=T'_\land$ (notice also that $T(d_{T,v}-1,j)$ is not adjacent to any vertex in $\wt P(T')$).
    
    Thus, $c'$ is a coloring of $\R(\wt G_v)$ and therefore $\chi(\R(\wt G_v))\leq \chi(\R(G))$.
\end{proof}

\begin{coro}
\label{coro_chi_ft}
    Let $G$ be a non-complete connected graph and $v\in V(G)$ a universal vertex. Then $\chi(\R(\wt G_v))=\chi(\R(G))$.
\end{coro}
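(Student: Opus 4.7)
The plan is to obtain this corollary as an immediate specialization of the preceding proposition, by choosing the partition $V_1,V_2$ so that $\{v\}=V_1$.

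More precisely, I would set $V_1=\{v\}$ and $V_2=V(G)\setminus\{v\}$. This is clearly a partition of $V(G)$ since $v\in V(G)$. The hypothesis of the preceding proposition requires that $N_G(u)=V_2$ for every $u\in V_1$. Since $V_1=\{v\}$, this reduces to the single condition $N_G(v)=V(G)\setminus\{v\}$, which is exactly the statement that $v$ is a universal vertex in $G$, and is therefore part of the hypothesis. The remaining hypothesis, that $G$ is non-complete and connected, is likewise inherited directly.

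Having verified all hypotheses, the conclusion $\chi(\R(\wt G_v))=\chi(\R(G))$ follows at once from the preceding proposition applied with this choice of $V_1$ and $v$. No further work is needed; in particular, the coloring constructed in the proof of the proposition already handles this case, so there is no real obstacle to overcome here. The only mild point to note is a notational sanity check: the preceding proposition is stated for $v\in V_1$ with $V_1$ an independent set (since $V_1$ is the set of vertices whose neighbourhood equals $V_2$, implying no two vertices of $V_1$ are adjacent), and indeed $V_1=\{v\}$ is trivially independent, so nothing breaks.
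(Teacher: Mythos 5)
Your proposal is correct and is exactly the intended derivation: the paper states the corollary without proof as an immediate specialization of the preceding proposition, obtained by taking $V_1=\{v\}$ and $V_2=V(G)\setminus\{v\}$, where the hypothesis $N_G(u)=V_2$ for all $u\in V_1$ reduces precisely to $v$ being universal. Nothing further is needed.
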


\begin{obs}
    Since $\chi(K_{1,2})=3$, Corollary \ref{coro_chi_ft} implies that the chromatic number of the rotation graph of complete bipartite graphs $K_{p,q}$ is $3$ (provided $p\geq 2$ or $q\geq 2$).
\end{obs}

\section{On distances and diameter of rotation graphs}
\label{section_dist_and_diam}

The problem of finding distances in rotation graphs, as well as determining or bounding their diameter, has attracted attention in recent research. To address this, we begin by presenting a result that connects the adjacency of vertex pairs in a graph $G$ with the number of rotations they undergo in sequences of rotations in 
$\R(G)$. This result will be useful for the analysis of distances and diameters in rotation graphs that follows.

If $T,T'\in V(\R(G))$ and $u,v\in V(G)$ are vertices such that $u$ is an ancestor of $v$ in $T$ and a descendant of $v$ in $T'$, then we say that $u,v$ have different relative order in $T$ and $T'$. If $u$ is an ancestor (alternatively descendant) of $v$ in both $T$ and $T'$, then we say that $u,v$ have the same relative order in $T$ and $T'$.

\begin{lemma}
\label{lemma_adj_vertices_rotation_occurs_once}
Let $G$ be a graph and let $u,v \in V(G)$.
\begin{enumerate}[(a)]
    \item \label{item_relative_order} Suppose $u$ and $v$ are adjacent in $G$ and let $T,T'\in V(\R(G))$. Then, $u$ and $v$ have different relative order in $T$ and $T'$ if and only if any sequence of rotations that transforms $T$ into $T'$ has an odd number of $uv$-rotations.
    \item \label{item_at_most_one_v1v2_rotation} If $u$ and $v$ are true twins 
    in $G$ and $T,T'\in V(\R(G))$, then every minimum length $TT'$-path in $\R(G)$ has exactly one edge determined by an $uv$-rotation if $u$ and $v$ have different relative orders in $T$ and in $T'$, and no edge determined by $uv$-rotations if $u$ and $v$ have the same relative order in $T$ and in $T'$. 
\end{enumerate}
\end{lemma}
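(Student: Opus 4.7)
For (a), since $u$ and $v$ are adjacent in $G$, Remark \ref{obs_uv_branch} ensures they lie on the same branch of every search tree on $G$, so one is always an ancestor of the other; let $\mathrm{ord}(T) \in \{0,1\}$ record which. I plan to reduce (a) to the single-step claim that a rotation of a pair $a, b$ (with $a$ the parent of $b$ in $T$) flips $\mathrm{ord}$ if and only if $\{a,b\} = \{u,v\}$; the overall statement then follows by chaining the flips and reading off parities. The $\{a,b\} = \{u,v\}$ case is immediate. For the complementary case I split by whether $u$ or $v$ lies in $\{a,b\}$: when $\{a,b\}\cap\{u,v\} = \emptyset$, direct inspection of the rotation rule shows that the ancestor set of any $w\notin\{a,b\}$ in $T'$ differs from its ancestor set in $T$ only by the addition or removal of $a$ or $b$, so with $u\notin\{a,b\}$, $u$ is an ancestor of $v$ in $T$ iff in $T'$; the sub-cases where $u$ or $v$ coincides with $a$ or $b$ are handled by combining the rotation rule with the adjacency of $u$ and $v$ in $G$ (for example, if $u = a$ and $v \in V(T|b)$, the subtree of $b$ containing $v$ has the neighbor $v$ of $u$, so by the third bullet of the rotation definition it becomes a subtree of $u$ in $T'$, keeping $v$ a descendant of $u$). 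This case analysis is the longest but routine step.

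For (b), by (a) the parity of $uv$-rotations along any $TT'$-walk is determined, so it suffices to show that any minimum $TT'$-path has at most one $uv$-rotation. Since $u,v$ are true twins, $\rho_{u,v}$ is an automorphism of $G$ and induces an involution $\rho_{u,v}^*$ on $V(\R(G))$ preserving rotation adjacency. The crux of the argument is the identity $\rho_{u,v}^*(T') = T$ whenever $T'$ is obtained from $T$ by a $uv$-rotation. I will prove this via the observation that a true twin in a connected graph is never a cut vertex: any neighbor of $u$ separated from the rest by removing $u$ would also be a neighbor of $v$, hence still connected. Applied to $G[V(T|u)]$, which is connected by Remark \ref{obs_search_subtree} and in which $u$ and $v$ are still true twins, this forces $v$ to be the only child of $u$ in $T$, and symmetrically $u$ to be the only child of $v$ in $T'$. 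Moreover, each subtree of $v$ in $T$ must contain a neighbor of $v$, hence a neighbor of $u$ by the twin property, so all such subtrees become subtrees of $u$ in $T'$ by the rotation rule. Assembling these facts, $T'$ is obtained from $T$ merely by swapping the labels $u$ and $v$, establishing the identity.

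With the identity in hand, suppose a minimum $TT'$-path $T = T_0, T_1, \ldots, T_k = T'$ contains $uv$-rotations at two positions $j_1 < j_2$. Replace the segment $T_{j_1}, \ldots, T_{j_2-1}$ by $\rho_{u,v}^*(T_{j_1}), \ldots, \rho_{u,v}^*(T_{j_2-1})$. Since $\rho_{u,v}^*$ preserves adjacency, this is a walk from $T$ to $T'$; the identity gives $\rho_{u,v}^*(T_{j_1}) = T_{j_1-1}$ and, using that $\rho_{u,v}^*$ is an involution, $\rho_{u,v}^*(T_{j_2-1}) = T_{j_2}$, producing two consecutive vertex repetitions that can be deleted to yield a walk of length $k-2$, contradicting minimality. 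Hence every minimum path has at most one $uv$-rotation, and combined with the parity from (a) we obtain exactly $0$ if $u,v$ have the same relative order in $T$ and $T'$, and exactly $1$ otherwise. The main obstacle is the crux identity in (b): without the non-cut-vertex property of true twins, a $uv$-rotation could permute subtrees non-symmetrically and $\rho_{u,v}^*(T')$ would differ from $T$.
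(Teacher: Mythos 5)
Your proof is correct and follows essentially the same route as the paper: part (a) by tracking how a single rotation affects the relative order of $u$ and $v$ (the paper simply declares this a direct consequence of Remark \ref{obs_uv_branch}), and part (b) by the same surgery with the twin-swapping isomorphism $\rho_{u,v}^{\ast}$ that replaces the segment between two $uv$-rotations by its image and shortens a putative minimum path by two. The only difference is that you supply a careful justification of the key identity $\rho_{u,v}^{\ast}(T')=T$ for a $uv$-rotation (via the observation that a true twin is never a cut vertex, so $v$ is the unique child of $u$), which the paper asserts with ``further notice that'' and leaves unproved.
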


\begin{proof}\ 

($a$) The result is a direct consequence of Remark \ref{obs_uv_branch}.

($b$) Let $T,T'\in  V(\R(G))$ and $A$ a minimum lenght $TT'$-path in $\R(G)$. Suppose $A$ has vertices $T=T_0,\ldots,T_k=T'$ and that $T_iT_{i+1}$ are adjacent for $i\in\{0,\ldots,k-1\}$.

Let $u,v\in V(G)$ be true twins in $G$ and assume two or more edges in $A$ correspond to $uv$-rotations. Let $i,j\in\{0,\ldots,n-1\}$ be indices such that $i+1<j$, and that $T_i,T_{i+1}$ and $T_jT_{j+1}$ are edges determined by two of these rotations. 

Consider $\rho: V(G)\rightarrow V(G)$ the function defined by $\rho(u)=v$, $\rho(v)=u$ and $\rho(x)=x$ for every $x\in V(G)-\{u,v\}$. Notice that since $u$ and $v$ are true twins in $G$, then $\rho$ is an isomorphism. Additionally, for every $T\in V(\R(G))$, we denote by $\rho^{\ast}(T)$ the rooted tree with vertex set $V(G)$ and edge set $\{\rho(x)\rho(y) \mid xy\in E(T)\}$. Since $\rho$ is a graph isomorphism, $\rho^{\ast}(T)$ is a search tree on $G$. Further notice that $T_i=\rho^{\ast}(T_{i+1})$ and $T_{j+1}=\rho^{\ast}(T_{j})$.

We have that the search trees
\[T=T_0,\ldots,T_{i-1},\rho^\ast(T_{i+1}),\ldots, \rho^\ast(T_{j}),T_{j+2},\ldots,T_k=T'
\]
form a $TT'$-walk with exactly two edges (determined by $uv$-rotations) less than $A$. This is absurd since $A$ has minimum length.

Therefore, $A$ has at most one edge corresponding to a $uv$-rotation. By item (\ref{item_relative_order}), there is exactly one $uv$-rotation if $u$ and $v$ have different relative order in $T$ and $T'$, and zero otherwise.
\end{proof}

\begin{obs}
\label{obs:selectRotation}
If $u$ and $v$ are true twins of $G$, the proof of item $(b)$ gives a procedure applicable to a $TT'$-path $A$ in $\R(G)$, that allow us to choose two edges determined by $uv$-rotations and to construct a new $TT'$-walk without those particular edges.
\end{obs}

As a direct consequence of Lemma \ref{lemma_adj_vertices_rotation_occurs_once}$(b)$, we obtain the following.

\begin{prop}
    Let $G$ be a connected graph, $v\in V(G)$, $T,T'\in V(\R(G_v))$. 
    \begin{enumerate}[(a)]
        \item Let $j\in\{1,2\}$. If $S,S'\in V(\A_j)$ then the paths of minimum length joining $S$ and $S'$ are all contained in $\A_j$.
        \item If $S\in V(\A_1)$ and $S'\in V(\A_2)$, then the paths of minimum length between $S$ and $S'$ have exactly one edge determined by a $vv'$-rotation.
    \end{enumerate}
\end{prop}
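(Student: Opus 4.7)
The plan is to reduce both parts to Lemma \ref{lemma_adj_vertices_rotation_occurs_once}(\ref{item_at_most_one_v1v2_rotation}) applied to the true twin pair $v, v'$ of $G_v$, combined with Remark \ref{obs_symmetry_in_A(Gv)}(\ref{item_boundary_edges}).

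First I would record the preparatory structural observation that for every $S \in V(\A_1)$ the vertex $v'$ is an ancestor of $v$ in $S$, while for every $S \in V(\A_2)$ the vertex $v$ is an ancestor of $v'$. This is immediate from the definitions of Section \ref{subsection_adding_tt}: the construction $T(i,1) = T(i, v', v)$ with $i \in \{0,\ldots, d_{T,v}\}$ inserts $v'$ strictly above $v$ on the path from $r_T$ to $v$, and $T(i,2) = \rho^{\ast}_{v,v'}(T(i,1))$ merely swaps the roles of $v$ and $v'$. In particular $V(\A_1)$ and $V(\A_2)$ are disjoint, vertices of the same $\A_j$ agree on the relative order of $v$ and $v'$, and vertices on opposite sides disagree.

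For part (a), given $S, S' \in V(\A_j)$, $v$ and $v'$ have the same relative order in $S$ and in $S'$. By Lemma \ref{lemma_adj_vertices_rotation_occurs_once}(\ref{item_at_most_one_v1v2_rotation}), every minimum length $SS'$-path in $\R(G_v)$ contains no $vv'$-rotation edge. By Remark \ref{obs_symmetry_in_A(Gv)}(\ref{item_boundary_edges}), every edge of $\R(G_v)$ with one endpoint in $\A_1$ and the other in $\A_2$ is a $vv'$-rotation edge. Therefore no minimum length $SS'$-path can cross between $\A_1$ and $\A_2$, and since both endpoints lie in $\A_j$, the entire path stays inside $\A_j$.

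Part (b) is then a direct application of the same lemma: the hypothesis $S \in V(\A_1)$, $S' \in V(\A_2)$ forces $v$ and $v'$ to have different relative orders in $S$ and $S'$, so Lemma \ref{lemma_adj_vertices_rotation_occurs_once}(\ref{item_at_most_one_v1v2_rotation}) immediately yields that every minimum length $SS'$-path contains exactly one $vv'$-rotation edge. The only delicate step is the preparatory ancestor observation linking membership in $\A_j$ with the relative order of $v$ and $v'$; once this is settled, both assertions follow as straightforward corollaries of Lemma \ref{lemma_adj_vertices_rotation_occurs_once}(\ref{item_at_most_one_v1v2_rotation}) and Remark \ref{obs_symmetry_in_A(Gv)}(\ref{item_boundary_edges}).
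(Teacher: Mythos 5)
Your proposal is correct and fills in exactly the argument the paper intends: the paper states this proposition as a direct consequence of Lemma \ref{lemma_adj_vertices_rotation_occurs_once}(\ref{item_at_most_one_v1v2_rotation}), and your preparatory observation (membership in $\A_j$ determines the relative order of the true twins $v,v'$, and crossing edges are exactly the $vv'$-rotation edges by Remark \ref{obs_symmetry_in_A(Gv)}(\ref{item_boundary_edges})) is precisely the bridge needed to make that deduction explicit.
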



Recall from Proposition \ref{prop_pi_quotient} that we can describe the structure of $\R(G-S)$ in terms of the structure of $\R(G)$ as a graph quotient. This implies that there is a relation between distances in both rotation graphs. In the remainder of the section, we utilize this relation to establish a lower bound for $\diam (\R(G-S))$ in terms of $\diam(\R(G))$. 


In the following remark we state a particular property of the map $\pi$. 

\begin{obs}
\label{obs_preimage_by_pi_is_permutohedron}
Let $G$ be a connected graph, $W\subseteq V(G)$ a set of true twins and $S=E(G[W])$. Let $\wt T\in V(\R(G-S))$. If there exists $L\subseteq W$ such that $|L|\geq 2$ and every $l\in L $ is a leaf of $\wt T$ with a common parent $q\in V(G)-W$, then the subgraph of $\R(G)$ induced by $\pi^{-1}(\wt T)$ is isomorphic to $\R(K_{|L|})$. Otherwise, $\pi^{-1}(\wt T)$ has a single element.
\end{obs}

Notice that $\R(G-S)$ is isomorphic to the graph obtained from $\R(G)$ after applying several edge contractions of $W$-special edges. In addition, if $T,T'\in V(\R(G))$, from Lemma \ref{lemma_adj_vertices_rotation_occurs_once}(\ref{item_at_most_one_v1v2_rotation}) we known that in any minimum $TT'$-path there are at most $\binom{|W|}{2}$ $W$-special edges. Combining this fact and previous remark we can set the following result. 

\begin{lemma}
\label{lemma_paths_gral}
Let $G$ be a connected graph, $W\subseteq V(G)$ a set of true twins such that $|W|\geq 2$ and $S=E(G[W])$. Let $T,T'\in V(\R(G))$ and $\widetilde A$ a $\pi(T)\pi(T')$-path in $\R(G-S)$. Then, there exists a $TT'$-path $A$ in $\R(G)$  such that
\[
\len(A)\leq \len(\widetilde A)+\binom{|W|}{2}.
\]
\end{lemma}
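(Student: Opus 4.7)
The plan is to build a $TT'$-walk in $\R(G)$ of length at most $\len(\widetilde A)+\binom{|W|}{2}$ and then extract a path of no greater length between $T$ and $T'$. I would proceed constructively, processing the edges of $\widetilde A=\widetilde T_0,\ldots,\widetilde T_k$ one at a time, maintaining at each stage a lift $T^{(i)}\in V(\R(G))$ satisfying $\pi(T^{(i)})=\widetilde T_i$, starting from $T^{(0)}=T$.

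The edge $\widetilde T_i\widetilde T_{i+1}$ corresponds to some rotation $\alpha\beta$. If $\alpha$ and $\beta$ are adjacent in $T^{(i)}$, then I apply the same rotation to $T^{(i)}$ to obtain $T^{(i+1)}$; this covers every case in which the rotation does not interact with the $L$-substructure of $\widetilde T_i$ (that is, with a set of at least two $W$-leaves sharing a common parent $q\in V(G)-W$ in $\widetilde T_i$). The only problematic situation is when the rotation is of the form $ql$ with $l$ a $W$-leaf of $\widetilde T_i$ under $q$: in $T^{(i)}$, the corresponding $W$-vertices form a path below $q$, but $l$ may not be at the top of that path. In this case I would first perform a sequence of within-preimage-class moves, namely rotations between two true twins of $W$ adjacent in the current lift, to bring $l$ to the position of child of $q$, and then apply the $ql$-rotation.

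After the $k$ edges of $\widetilde A$ are processed, the walk ends at some $T^{**}\in\pi^{-1}(\widetilde T_k)=\pi^{-1}(\pi(T'))$. By Remark~\ref{obs_preimage_by_pi_is_permutohedron}, this preimage class is isomorphic to $\R(K_{|L|})$ for some $L\subseteq W$, and its diameter is $\binom{|L|}{2}\leq\binom{|W|}{2}$. I would append a path from $T^{**}$ to $T'$ inside this preimage class to finish the walk at $T'$.

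The main obstacle, and the heart of the argument, is to show that the total number of within-preimage $W$-rotations—those inserted during the lifting together with the final correction path—does not exceed $\binom{|W|}{2}$. This is precisely the content of Lemma~\ref{lemma_adj_vertices_rotation_occurs_once}(\ref{item_at_most_one_v1v2_rotation}): since the vertices of $W$ are pairwise true twins, each pair $u,v\in W$ contributes at most one $uv$-rotation to any minimum-length $TT'$-path in $\R(G)$. By selecting each within-preimage adjustment to be of minimum length and combining with the final correction, the resulting walk uses at most $\binom{|W|}{2}$ $W$-rotations in total, and therefore has length at most $\len(\widetilde A)+\binom{|W|}{2}$, as required.
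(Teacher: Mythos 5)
Your construction is essentially the paper's: lift the edges of $\widetilde A$ one at a time, inserting rotations between true twins of $W$ (i.e.\ $W$-special edges) whenever the lifted rotation cannot be applied directly, and append a final correction path inside $\pi^{-1}(\pi(T'))$, which by Remark \ref{obs_preimage_by_pi_is_permutohedron} is a permutohedron. The problem is the step you yourself single out as the heart of the argument. You justify the bound of $\binom{|W|}{2}$ on the inserted $W$-rotations by citing the \emph{statement} of Lemma \ref{lemma_adj_vertices_rotation_occurs_once}(\ref{item_at_most_one_v1v2_rotation}), which says that a \emph{minimum-length} $TT'$-path contains at most one $uv$-rotation per pair of true twins. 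But the walk you have built is not known to be a minimum-length path, so the lemma does not apply to it; and ``selecting each within-preimage adjustment to be of minimum length'' does not cap the total either, since $\widetilde A$ may force many separate adjustments, each of which can repeat a rotation of the same pair $u,v\in W$. Local minimality of each adjustment says nothing about the global count.

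What is actually needed is the \emph{proof technique} of that lemma, packaged in Remark \ref{obs:selectRotation}: whenever the constructed walk contains two rotations of the same pair $u,v\in W$, replace the segment between them by its image under the twin-swapping isomorphism $\rho_{u,v}^{\ast}$; this deletes both occurrences, shortens the walk by two, and leaves the non-$W$ rotations (hence the $\len(\widetilde A)$ lifted edges) intact. Iterating until no pair of $W$ appears twice leaves at most $\binom{|W|}{2}$ rotations between vertices of $W$, and the bound follows. This is exactly how the paper closes the argument, so the repair is short --- but as written, your final step does not follow from what you invoke.
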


\begin{proof}
Assume first that $\pi(T)\neq \pi(T')$ and let $\widetilde A$ be a $\pi(T)\pi(T')$-path in $\R(G-S)$. Suppose that $\wt A$ consists in a sequence of vertices $\pi(T)=\widetilde T_0,\widetilde T_1, \ldots,\widetilde T_n=\pi(T')$ for some $n\geq 1$ and that for $i\in\{0, \ldots, n-1\}$, $\widetilde T_{i+1}$ is obtained from  $\widetilde T_i$ by a $u_iv_i$-rotation, for some $u_i,v_i\in V(G)$ adjacent vertices in $\wt T_i$.

For every $i\in\{0,\ldots,n-1\}$, let $T_i\in \pi^{-1}(\widetilde T_i)$ such that $u_i$ and $v_i$ are adjacent in $T_i$. By making an $u_iv_i$-rotation on $T_i$ we obtain $T'_{i+1}\in\pi^{-1}(\wt T_{i+1})$. Let $B_{i+1}$ a (possibly trivial) $T'_{i+1}T_{i+1}$-path in the subgraph of $\R(G)$ induced by $\pi^{-1}(\wt T_{i+1})$ (recall that such a path exists by Remark \ref{obs_preimage_by_pi_is_permutohedron}).

The concatenation of the paths $B_i$ (whose edges are $W$-special edges) and the $u_iv_i$-rotations for $i\in\{0,\ldots, n-1\}$ give a $TT'$-path $B$ in $\R(G)$. Thus, by Remark \ref{obs:selectRotation}, we can construct a path $A$ from $B$ by applying the process in item $(b)$ of Lemma \ref{lemma_adj_vertices_rotation_occurs_once} between pairs of $uv$-rotations ($u,v\in W$) that determines $W$-special edges. Moreover, the path $A$ has at most one more rotation for each pair of vertices in $W$ (that correspond to $W$-special edges), and therefore $A$ has length at most $n+\binom{|W|}{2}=\len(\widetilde A)+\binom{|W|}{2}$.

If $\pi(T)=\pi(T')$, then $\wt A$ is a path of lenght $0$, and by Remark \ref{obs_preimage_by_pi_is_permutohedron} there exists a $TT'$-path in $A$ of lenght at most $\binom{|W|}{2}=\diam(\R(K_{|W|}))$.
\end{proof}

\begin{obs}
\label{obs_W_special_edges_in_path}
In the previous proposition, the constructed path $A$ has the same number of rotations that $\widetilde A$ plus at most one rotation between each pair of vertices in $W$ (that corresponds to a $W$-special edge). The length of the path $A$ is increased with respect to the length of $\wt A$ by rotations of pairs of vertices from $W$.
\end{obs}

\begin{theo}
\label{theo_lower_bound_quotient}
Let $G$ be a connected graph and let $W\subseteq V(G)$ be a set of true twins in $G$ such that $|W|\geq 2$. Let $S=E(G[W])$. Then
\[\diam(\R(G)) - \binom{|W|}{2}\leq \diam(\R(G-S)).
\]
\end{theo}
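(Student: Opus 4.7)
The plan is to derive this bound as a direct corollary of Lemma \ref{lemma_paths_gral}, by taking an arbitrary pair of search trees in $\R(G)$ realizing the diameter and transferring a shortest path in $\R(G-S)$ between their images under $\pi$ back to $\R(G)$.

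First I would fix $T,T'\in V(\R(G))$ achieving $\dist_{\R(G)}(T,T')=\diam(\R(G))$, and then consider $\pi(T),\pi(T')\in V(\R(G-S))$. Let $\widetilde A$ be a shortest $\pi(T)\pi(T')$-path in $\R(G-S)$, so that $\len(\widetilde A)=\dist_{\R(G-S)}(\pi(T),\pi(T'))\leq \diam(\R(G-S))$.

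Next I would apply Lemma \ref{lemma_paths_gral}: since $W$ is a set of true twins in $G$ with $|W|\geq 2$ and $S=E(G[W])$, we obtain a $TT'$-path $A$ in $\R(G)$ satisfying
\[
\len(A)\leq \len(\widetilde A)+\binom{|W|}{2}\leq \diam(\R(G-S))+\binom{|W|}{2}.
\]
Since the existence of such a path bounds the distance in $\R(G)$ from above, it follows that
\[
\diam(\R(G))=\dist_{\R(G)}(T,T')\leq \len(A)\leq \diam(\R(G-S))+\binom{|W|}{2},
\]
and rearranging yields $\diam(\R(G))-\binom{|W|}{2}\leq \diam(\R(G-S))$, as claimed.

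There is no real obstacle here, since the essential combinatorial work — controlling how many extra rotations can appear when pulling a path in $\R(G-S)$ back to $\R(G)$ — has already been carried out in Lemma \ref{lemma_paths_gral} and Lemma \ref{lemma_adj_vertices_rotation_occurs_once}$(b)$. The only thing worth double-checking when writing the proof is that $\pi$ is defined on all of $V(\R(G))$ and surjects onto $V(\R(G-S))$, so that both $\pi(T)$ and $\pi(T')$ are genuine vertices of $\R(G-S)$; this is exactly the content established right before Proposition \ref{prop_pi_quotient}.
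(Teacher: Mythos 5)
Your proposal is correct and follows essentially the same route as the paper: both invoke Lemma \ref{lemma_paths_gral} to pull a shortest $\pi(T)\pi(T')$-path in $\R(G-S)$ back to a $TT'$-path in $\R(G)$ of length at most $\binom{|W|}{2}$ greater, and then conclude by bounding $\dist_{\R(G)}(T,T')$. The only cosmetic difference is that you fix a diameter-realizing pair $T,T'$ at the outset, whereas the paper argues for an arbitrary pair and then takes the maximum; the two are equivalent.
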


\begin{proof}
Let $T, T'\in V(\R(G))$. Let $\widetilde B$ a $\pi(T)\pi(T')$-path in $\R(G-S)$ of minimum length. By Lemma \ref{lemma_paths_gral}, there exists $B$ a $TT'$-path in $\R(G)$ such that
    \[
    \len (B)\leq \len(\widetilde B)+\binom{|W|}{2}.
    \]
Let $A$ be a $TT'$-path in $\R(G)$ of minimum length.  Then
     \begin{align*}
    \dist_{\R(G)}(T,T')&=\len(A)\\
    &\leq \len(B)\\
    &\leq \len(\widetilde B)+\binom{|W|}{2}\\
    &=\dist_{\R(G-S)}(\pi(T),\pi(T'))+\binom{|W|}{2}\\
    &\leq \diam(\R(G-S))+\binom{|W|}{2},
    \end{align*}
and therefore
    \[
    \diam(\R(G))\leq \diam(\R(G-S))+\binom{|W|}{2}.
    \]
\end{proof}

The bound in Theorem \ref{theo_lower_bound_quotient} turns out to be sharp for the case when $G=\SPK_{p,q}$ with $q\geq 4p+1$, and $W=P$. Indeed, for $p,q\in\N$, the complete bipartite graph $K_{p,q}$ is $\SPK_{p,q}-S$ where $S$ is the set of edges with both endpoints in the clique $P$. Since $P$ is a set of true twins in $\SPK_{p,q}$, by Theorem
\ref{theo_lower_bound_quotient},
    \begin{equation}
    \label{eq_new_bound}
    \diam(\R(\SPK_{p,q}))-\binom{p}{2}\leq \diam(\R(K_{p,q})).
    \end{equation}
    Cardinal et al. show in \cite{CPV-2022a} that
\[
\diam(\R(\text{SPK}_{p,q}))=\begin{dcases}
2pq+\binom{p}{2}=2m-\binom{p}{2}, \text{ if }q\geq 4p+1\\
pq+\left\lfloor\frac{1}{2}\binom{q}{2}\right\rfloor+\binom{p}{2}, \text{ otherwise.}
\end{dcases}
\] 
They also show that $\diam(\R(K_{p,q}))=2pq$ for unbalanced complete bipartite graphs, that is, for $q\geq 4p+1$, and give the following bounds for the balanced case, when $\frac{p}{4}\leq q\leq 4p$. 
\begin{equation}
\label{eq_bound_binom_q2}
\binom{q}{2}\leq \diam(\R(K_{p,q}))\leq 2pq.
\end{equation}
Thus, as we claimed, when $q\geq 4p+1$ the bound in Theorem \ref{theo_lower_bound_quotient} is sharp for $\R(\text{SPK}_{p,q})$ and $\R(\text{K}_{p,q})$. Furthermore, when $q\leq 4p$, we obtain a new lower bound for the diameter of associahedra of balanced complete bipartite graphs, which improves on the one presented in \cite{CPV-2022a}.
    




\begin{coro}
\label{coro_lb_Kpq}
If $\min\{2,\frac{p}{4}\}\leq q\leq 4p$, 
\[
pq+\left\lfloor\frac{1}{2}\binom{q}{2}\right\rfloor\leq \diam(\R(K_{p,q})).
\]
\end{coro}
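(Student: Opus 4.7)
The plan is a direct application of Theorem \ref{theo_lower_bound_quotient} combined with the closed-form value of $\diam(\R(\SPK_{p,q}))$ recalled from \cite{CPV-2022a}. In fact, the bound \eqref{eq_new_bound} has already been recorded in the preceding discussion, so the corollary essentially reduces to a substitution.

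First I will verify the structural setup. In $\SPK_{p,q}$, each vertex of the clique $P=\{x_1,\ldots,x_p\}$ has closed neighborhood $P\cup Q$, so $P$ is a set of true twins of size $p$. Setting $S=E(\SPK_{p,q}[P])$, removing $S$ deletes exactly the intra-clique edges and leaves the complete bipartite graph, so $\SPK_{p,q}-S=K_{p,q}$. Under the assumption $p\geq 2$ (the case $p=1$ being trivial since then $K_{1,q}$ is a star and the left-hand side reduces to $q$), Theorem \ref{theo_lower_bound_quotient} applies with $W=P$ and yields exactly \eqref{eq_new_bound}:
\[
\diam(\R(\SPK_{p,q})) - \binom{p}{2} \leq \diam(\R(K_{p,q})).
\]

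Next, in the regime $q\leq 4p$, the formula from \cite{CPV-2022a} recalled in the paragraph preceding \eqref{eq_bound_binom_q2} gives
\[
\diam(\R(\SPK_{p,q})) = pq + \left\lfloor \tfrac{1}{2}\binom{q}{2}\right\rfloor + \binom{p}{2}.
\]
Substituting this into \eqref{eq_new_bound} and canceling the two copies of $\binom{p}{2}$ produces precisely the claimed inequality
\[
pq + \left\lfloor \tfrac{1}{2}\binom{q}{2}\right\rfloor \leq \diam(\R(K_{p,q})).
\]

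There is no real obstacle to the argument: Theorem \ref{theo_lower_bound_quotient} has already done the work, and the corollary is merely its combination with a known diameter formula. The upper bound $q\leq 4p$ serves only to select the correct branch of the \cite{CPV-2022a} formula, while the lower bound $\min\{2,p/4\}\leq q$ simply delineates the range in which the resulting inequality is a genuine improvement over the previously known bound $\binom{q}{2}\leq \diam(\R(K_{p,q}))$ from \eqref{eq_bound_binom_q2}.
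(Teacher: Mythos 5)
Your proof is correct and is essentially the paper's own (implicit) argument: apply Theorem \ref{theo_lower_bound_quotient} with $W=P$ to get \eqref{eq_new_bound}, then substitute the balanced-case formula $\diam(\R(\SPK_{p,q}))=pq+\lfloor\tfrac{1}{2}\binom{q}{2}\rfloor+\binom{p}{2}$ and cancel $\binom{p}{2}$. One small slip in your $p=1$ aside: the left-hand side is $q+\lfloor\tfrac{1}{2}\binom{q}{2}\rfloor$, not $q$; that case is still immediate because $K_{1,q}=\SPK_{1,q}$, so the diameter formula applies directly.
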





In the following, we apply this result to the computation of the exact value of $\diam(K_{2,q})$, for balanced graphs $K_{2,q}$, that is, for $3\leq q \leq 8$. In that case we have that

\begin{equation}
\label{eq_diam_SPK-1}
 \diam(\R(\text{SPK}_{2,q}))-1\leq \diam(\R(K_{2,q}))\leq \diam(\R(\text{SPK}_{2,q})),
\end{equation}
where the first inequality follows from Theorem \ref{theo_lower_bound_quotient} and the second holds since $K_{2,q}$ is a subgraph of $\SPK_{2,q}$ \cite{MP-2015}.

To achieve this, we first establish a result analogous to Lemma \ref{lemma_paths_gral}. Furthermore, for this case a stronger result can be obtained, as we explicitly determine distances in $\R(\wt G_v)$ in terms of distances in $\R(G)$. 

\begin{lemma}
\label{lemma_distance}
Let $G$ be a graph and $W=\{u,v\}\in V(G)$ such that $u$ and $v$ are true twins. Let $T,T'\in V(\R(G))$. 
\begin{enumerate}[(a)]
    \item If $\wt A$ is a $\pi(T)\pi(T')$-path in $\R(G-uv)$, there exists a $TT'$-path $A$ in $\R(G)$ such that $\len(\widetilde A)\leq \len(A)\leq \len(\widetilde A)+1$. Additionally, if $\len(A)=\len(\widetilde A)$, then no edge of $A$ is  a $W$-special edge, and if $\len(A)=\len(\widetilde A)+1$, then $A$ has exactly one $W$-special edge.
    \item If there exists a $TT'$-path of minimum length that has a $W$-special edge, then 
    \[
    \dist_{\R(G-uv)}(\pi(T),\pi(T')) = \dist_{\R(G)} (T,T')-1.
    \] 
    Otherwise,  $\dist_{\R(G-uv)}(\pi(T),\pi(T'))=\dist_{\R(G)} (T,T')$.
\end{enumerate}
\end{lemma}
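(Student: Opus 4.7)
The plan is to derive both parts from Lemma~\ref{lemma_paths_gral} (which lifts any $\pi(T)\pi(T')$-path $\widetilde A$ to a $TT'$-path $A$ with $\len(A)\le \len(\widetilde A)+\binom{|W|}{2}$) combined with Lemma~\ref{lemma_adj_vertices_rotation_occurs_once}(b) (which controls the number of $uv$-rotations along minimum paths when $u,v$ are true twins). Since $|W|=2$, the bound of Lemma~\ref{lemma_paths_gral} already collapses to $\len(A)\le\len(\widetilde A)+1$, so the remaining work is mainly structural.

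For part (a), I apply Lemma~\ref{lemma_paths_gral} to obtain a $TT'$-path $A$ with $\len(A)\le\len(\widetilde A)+1$, and then unpack its construction: $A$ consists of $\len(\widetilde A)$ lifted rotations (one per edge of $\widetilde A$) together with some $W$-special edges inserted through the connectors $B_i$ inside the fibers $\pi^{-1}(\widetilde T_i)$, after which the reduction from the proof of Lemma~\ref{lemma_adj_vertices_rotation_occurs_once}(b) erases pairs of $uv$-rotations. The key observation is that any lifted rotation is of a pair adjacent in a search tree on $G-uv$, and such a pair cannot be $\{u,v\}$ by Remark~\ref{obs_uv_branch} (since $u,v$ are not adjacent in $G-uv$); hence the only $uv$-rotations appearing in the intermediate walk are the $W$-special connectors themselves. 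After the reduction, $s\in\{0,1\}$ of them remain, so $A$ contains exactly $\len(\widetilde A)+s$ edges, of which $s$ are $W$-special, yielding the refined statement.

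For part (b), I split on whether some minimum length $TT'$-path in $\R(G)$ contains a $W$-special edge. If it does, take such a path $A$: by Lemma~\ref{lemma_adj_vertices_rotation_occurs_once}(b) it carries exactly one $uv$-rotation, so $\pi(A)$ is a $\pi(T)\pi(T')$-walk in $\R(G-uv)$ of length $\dist_{\R(G)}(T,T')-1$, giving $\dist_{\R(G-uv)}(\pi(T),\pi(T'))\le\dist_{\R(G)}(T,T')-1$, while the reverse inequality comes from lifting a minimum $\pi(T)\pi(T')$-path via part (a). If no minimum $TT'$-path contains a $W$-special edge, the same projection argument yields $\dist_{\R(G-uv)}(\pi(T),\pi(T'))\le\dist_{\R(G)}(T,T')$; conversely, a strict inequality would, via part (a), lift a minimum $\pi(T)\pi(T')$-path to a minimum $TT'$-path equipped with a $W$-special edge, contradicting the hypothesis. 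The main obstacle throughout is the structural refinement in part (a): one must verify not only the crude length bound given by Lemma~\ref{lemma_paths_gral} but also that non-$W$-special and $W$-special edges are counted exactly as claimed, and this hinges on Remark~\ref{obs_uv_branch} to keep the lifted rotations disjoint from the $uv$-rotations so that the reduction procedure touches only the preimage connectors.
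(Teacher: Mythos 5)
Your overall route coincides with the paper's: part (a) is obtained by unpacking the lifting construction of Lemma \ref{lemma_paths_gral} (specialized to $|W|=2$, where the correction term $\binom{|W|}{2}$ collapses to $1$), and part (b) plays the projection $\pi$ against this lift. Your part (b) is correct and is essentially the paper's argument.

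However, the step you yourself single out as the key observation in part (a) is wrong. You claim that an edge of $\R(G-uv)$ can never be determined by a $uv$-rotation because $u$ and $v$ are non-adjacent in $G-uv$, citing Remark \ref{obs_uv_branch}. That remark only says that non-adjacent vertices lie in different branches of \emph{some} search tree; it does not prevent $v$ from being a child of $u$ in \emph{other} search trees on $G-uv$, and rotations are defined for any parent--child pair of the tree, not only for pairs adjacent in the graph. Concretely, take $G=K_3$ on $\{u,v,w\}$, so that $G-uv$ is the path $u-w-v$: the search trees $u\to v\to w$ and $v\to u\to w$ on $G-uv$ are adjacent in $\R(G-uv)$ via a $uv$-rotation, and neither is $W$-special. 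Hence the lifted walk may contain $uv$-rotations besides the connectors $B_i$, and if the cancellation procedure of Lemma \ref{lemma_adj_vertices_rotation_occurs_once}(b) were allowed to pair such a lifted $uv$-rotation with a connector (or with another lifted one), your count $\len(A)=\len(\widetilde A)+s$ with exactly $s$ $W$-special edges --- and even the lower bound $\len(A)\geq\len(\widetilde A)$ --- would no longer follow. The repair is the one the paper actually uses (proof of Lemma \ref{lemma_paths_gral} together with Remark \ref{obs_W_special_edges_in_path}): apply the cancellation only to pairs of $uv$-rotations that determine $W$-special edges, i.e., only to the connector edges coming from the fibres $\pi^{-1}(\widetilde T_i)$. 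These are the only edges the construction adds to the $\len(\widetilde A)$ lifted rotations, so after pairwise cancellation at most one of them survives and the refined count holds without any claim about the lifted rotations being disjoint from $\{u,v\}$.
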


\begin{proof}\ 

($a$) It is a direct consequence of Lemma \ref{lemma_paths_gral} and Remark \ref{obs_W_special_edges_in_path}.

($b$) Let $T,T'\in V(\R(G))$ and let $A$ be a minimum length $TT'$-path. If $A$ has consecutive vertices $T_0,T_1,\ldots,T_k$, we denote $\pi(A)$ as the path in $\R(G-S)$ with vertex set $\{\pi(T_i)\mid i\in\{0,\ldots,k\}\}$ and with edges $\pi(T_i)\pi(T_{i+1})$ for $i\in\{0,\ldots, k-1\}$ such that $\pi(T_i)\neq\pi(T_{i+1})$.

Suppose that $A$ has (exactly) one $W$-special edge. Then $\len(\pi(A))=\len(A)-1$. Assume that there exists a shorter $\pi(T)\pi(T')$-path $\widetilde A_0$. We construct a path $A_0$ in $\R(G)$, as in item ($a$), such that $\len(A_0)\leq\len(\widetilde A_0)+1$. Hence, $\len(A_0)\leq \len(\widetilde A_0)+1< \len(\pi(A))+1=\len(A)$. But this is a contradiction, since $A$ had minimum length. Therefore, $\dist_{\R(G-uv)}(\pi(T),\pi(T'))=\len(\pi(A))=\len(A)-1=\dist_{\R(G)} (T,T')-1$.

Now suppose that no $TT'$-path of minimum length has a $W$-special edge. We have that $\pi(A)$ is a $\pi(T)\pi(T')$-path in $\R(G-uv)$, and that, by definition of $\pi$, $\len(\pi(A))=\len(A)$. Suppose there exists a shorter $\pi(T)\pi(T')$-path $\widetilde A_0$. We construct a path $A_0$ in $\R(G)$, as in item ($a$), such that $\len(A_0)\leq \len(\widetilde A_0)+1$. Then $\len(A_0)\leq \len(\widetilde A_0)+1< \len(\pi(A))+1=\len(A)+1$. Hence, $\len(A_0)\leq \len(A)$, but since $A$ is of minimum length, $\len(A_0)=\len(A)$. Then, by the supposition at the beginning of the paragraph, $A_0$ has no $W$-special edge and thus $\len(A_0)=\len(\widetilde A_0)$ by item ($a$), with $\len(\widetilde A_0)<\len(\pi(A))=\len(A)$. This is a contradiction, since $A$ had minimum length. Therefore, $\dist_{\R(G-uv)}(\pi(T),\pi(T'))=\len(\pi(A))=\len(A)=\dist_{\R(G)} (T,T')$.
\end{proof}

\subsection*{Computation of diameters of associahedra of complete bipartite graphs}

From Equation \ref{eq_diam_SPK-1} we have that 
\begin{align*}
7&\leq \diam(\R(K_{2,3}))\leq 8\\
11&\leq \diam(\R(K_{2,4}))\leq 12\\
15&\leq \diam(\R(K_{2,5}))\leq 16\\
19&\leq \diam(\R(K_{2,6}))\leq 20\\
24&\leq \diam(\R(K_{2,7}))\leq 25\\
30&\leq \diam(\R(K_{2,8}))\leq 31.
\end{align*}

In this subsection, we demonstrate that 
\begin{align*}
\diam(\R(K_{2,3}))&= 8\\
\diam(\R(K_{2,4}))&= 11\\
\diam(\R(K_{2,5}))&= 15\\
\diam(\R(K_{2,6}))&= 20\\
\diam(\R(K_{2,7}))&= 25\\
\diam(\R(K_{2,8}))&= 30.
\end{align*}

While computational methods could be employed to obtain these values, we present a formal proof to enhance theoretical comprehension and ensure rigorous accuracy.
 
Search trees on complete split graphs are called brooms, since they consist in a path such that one of its endpoints is the root and the other is attached to the leaves \cite{CPV-2022a}. Said path is called the handle of the broom. Figure \ref{fig:exampleSpecial} shows three brooms on $\SPK_{3,3}$. 

First, we make some general considerations following \cite{CPV-2022b}. Let $T,T'\in V(\R(K_{2,q}))$. For every $i=1,\ldots,q$, let $w_i$ be the number of vertices of $P$ that are above of $y_i$ in $T$ and let $w'_i$ be the number of vertices of $P$ that are above of $y_i$ in $T'$. We consider two $TT'$-paths in $\R(K_{2,q})$. The first of them consists in making every vertex of $Q$ in the handle of $T$ a leaf, using exactly
\[
\sum_{i=1}^q (2-w_i) = 2q-\sum_{i=1}^q w_i
\]
rotations. Then rearrange the two vertices of $P$ in the handle if necessary, using at most one rotation. Finally, take the vertices of $Q$ to their place in $T'$ using
\[
\sum_{i=1}^q (2-w'_i) = 
2q-\sum_{i=1}^q w'_i
\]
rotations. This path has length at least 
\[
1+4q-\sum_{i=1}^q w_i+w'_i.
\]

The other path consists in making the vertices of $P$ leaves in $T$, using 
\[
\sum_{i=1}^q w_i
\]
rotations. Analogously, it is possible to transform $T'$ in a tree with the vertices of $P$ as leaves, without making rotations between pairs of vertices of $Q$, using
\[
\sum_{i=1}^q w'_i 
\]
rotations. Transforming one of these trees with the vertices of $Q$ in the handle into the other, can be done using at most $\binom{q}{2}$ rotations. Hence, this $TT'$-path has length at most
\[
\binom{q}{2}+\sum_{i=1}^q w_i+w'_i .
\]
Therefore, $\dist_{\R(K_{2,q})}(T,T')\leq\min\left\{1+4q-\Omega, \binom{q}{2}+\Omega\right\}$, where $\Omega=\sum_{i=1}^q w_i+w'_i$. This bound is maximized when $\Omega=2q-\frac{1}{2}\left[\binom{q}{2}-1\right]$, thus 
\[
\dist_{\R(K_{2,q})}(T,T')\leq 2q+\frac{1}{2}\left[\binom{q}{2}+1\right].
\]

Now, for $q=4,5,8$ this upper bound is 11.5, 15.5, and 30.5, respectively, yielding 
\begin{align*}
\diam(\R(K_{2,4}))&\leq 11\\
\diam(\R(K_{2,5}))&\leq 15\\
\diam(\R(K_{2,8}))&\leq 30.
\end{align*}

As in all these cases the upper bound coincides with the lower bound given in (\ref{eq_diam_SPK-1}), the equalities hold. For the remaining values of $q$, there exists a gap of 1 between the lower and the upper bound.  Therefore, a different strategy is necessary to establish the exact value of the diameter.


In order to compute the diameter of $\R(K_{2,q})$ for $q=3,6,7$, in every case we follow a similar strategy. We give a pair of trees $T,T'\in V(\R(\SPK_{2,q}))$ such that $\dist_{\R(\SPK_{2,q})}(T,T')=\diam(\R(\SPK_{2,q}))$ and we prove that no path of minimum length between them has a $x_1x_2$-rotation (the true twin vertices in the graph). We then apply Lemma \ref{lemma_distance}, to obtain that $\dist_{\R(K_{2,q})}(\pi(T),\pi(T'))=\dist_{\R(\SPK_{2,q})}(T,T')=\diam(\R(\SPK_{2,q}))$. This gives the lower bound $\diam(\R(\SPK_{2,q}))\leq \diam(\R(K_{2,q}))$. We also have, since $K_{2,q}\subseteq \SPK_{2,q}$, the upper bound $\diam(\R(K_{2,q}))\leq \diam(\R(\SPK_{2,q}))$. The equalities follow. 
\medskip

We prove first that $\diam(\R(K_{2,3}))=8$. Recall that $\dist_{\R(\SPK_{2,3})}(T,T')\leq \diam(\SPK_{2,3})=8$. Let us see that for the search trees $T,T'$ on $\SPK_{2,3}$ shown in Figure \ref{fig:searchTreesOnSPK23}, $\dist_{\R(\SPK_{2,3})}(T,T')=8$. 

\begin{figure}
\centering
\begin{tikzpicture}
\node[vertex,label=right:{\small $y_1$}] at (0,3) (y1) {};
\node[vertex,label=right:{\small $x_1$}] at (0,2.5) (x1) {};
\node[vertex,label=right:{\small $y_2$}] at (0,2) (y2) {};
\node[vertex,label=right:{\small $x_2$}] at (0,1.5) (x2) {};
\node[vertex,label=right:{\small $y_3$}] at (0,1) (y3) {};
\draw (y1)--(x1)--(y2)--(x2)--(y3);
\node at (0,0.3) (T) {$T$};
\end{tikzpicture}
\qquad
\begin{tikzpicture}
\node[vertex,label=right:{\small $y_3$}] at (0,3) (y3) {};
\node[vertex,label=right:{\small $y_2$}] at (0,2.5) (y2) {};
\node[vertex,label=right:{\small $x_2$}] at (0,2) (x2) {};
\node[vertex,label=right:{\small $x_1$}] at (0,1.5) (x1) {};
\node[vertex,label=right:{\small $y_1$}] at (0,1) (y1) {};
\draw (y1)--(x1)--(y2)--(x2)--(y3);
\node at (0,0.3) (T') {$T'$};
\end{tikzpicture}

    \caption{Two search trees on $\SPK_{2,3}$.}
    \label{fig:searchTreesOnSPK23}
\end{figure}
 
To show that $\dist_{\R(\SPK_{2,3})}(T,T')\geq 8$, consider a sequence of rotations that transforms $T$ into $T'$. By Proposition (\ref{item_relative_order}), in this sequence there must be at least one $x_1x_2$-rotation, one $y_1x_i$-rotation and one $y_3x_i$-rotation  for $i=1,2$, since these pairs of vertices are adjacent and have different relative order in $T$ and $T'$. Then, this sequence has at least 5 rotations.

On the other hand, it can be the case that $y_2$ is a leaf of some broom in the sequence or not. In the first case, there must be a rotation with $x_2$ (going down), and then rotations with $x_1$ and $x_2$ (going back up) $T'$. In the second case, there are rotations with $x_1$, $y_1$ and $y_3$ since they have different order in $T$ and $T'$. In both cases, at least three more rotations are needed.

Thus, if $A$ is a $TT'$-path, then $\len(A)\geq 8$, which implies $\dist_{\R(\SPK_{2,3})}(T,T')\geq 8$. Therefore $\dist_{\R(\SPK_{2,3})}(T,T')= 8$. Denoting by $k$ the number of vertices of $Q$ that are leaves in any of the brooms in the sequence, these 8 necessary rotations are listed in Table \ref{table_rotations_3}.

\begin{table}
\centering
\begin{tabular}{ |c|c|c| }
\hline
Vertices involved & Effect of rotation & No. of rotations\\
\hline
$x_1,x_2$ & exchange & 1\\
\hline
$y_1$ with $x_1,x_2$ & $y_1$ leaf goes down & 2\\
\hline
$y_3$ with $x_1,x_2$ & $y_3$ leaf goes up & 2\\
\hline
\multicolumn{3}{|c|}{Case $k=2$} \\
\hline
$y_2$ with $y_1,y_3$ & $y_2$ exchange within handle & 2 \\ 
\hline
$x_1,y_2$ & exchange within handle & 1\\
\hline
\multicolumn{3}{|c|}{Case $k=3$} \\
\hline
$y_2,x_2$ & $y_2$ leaf goes down  & 1 \\
\hline
$y_2$ with $x_1,x_2$ & $y_2$ leaf goes up & 2\\
\hline
\end{tabular}
\caption{Necessary rotations in a sequence that transforms $T$ into $T'$.}
\label{table_rotations_3}
\end{table}

We show now that no $TT'$-path of minimum length in $\R(\SPK_{2,3})$ has a $x_1x_2$-rotation. This, together with Lemma \ref{lemma_distance} allows to conclude that $\dist_{\R(K_{2,3})}(T,T')=8$.

Suppose there were a minimum length path containing a $P$-special edge.

Note that $y_3$ is a leaf in the first tree of the sequence, before the occurrence of this $P$-special edge, and $y_1$ is a leaf in a tree that is after the special edge in the sequence. If $y_1$ and $y_3$ are not simultaneously leaves in any tree of the sequence, then there must be a rotation $y_1y_3$ (in the handle of a broom), not counted in Table \ref{table_rotations_3}. Then this path would not be optimum. It follows that $y_1$ and $y_3$ are simultaneously leaves in some tree of the path.

But, if $y_3$ is again a leaf after the special edge, that would require two additional rotations (between the pairs $x_1y_3$, $x_2y_3$), not counted in Table \ref{table_rotations_3}. Analogously, if  $y_1$ were a leaf in some tree of the sequence before the special edge, that would require two additional rotations. Thus, such a sequence would not be optimal.

Therefore no $TT'$-path in $\R(\SPK_{2,3})$ has a $P$-special edge.

It follows that $\diam(\R(K_{2,3})) = \diam(\R(\SPK_{2,3}))=8$.
\bigskip

We show now that $\diam(\R(K_{2,6}))=20$.

Consider $T,T'$ the search trees on $\SPK_{2,6}$ shown in Figure \ref{fig:searchTreesOnSPK26}. 

\begin{figure}
\centering
\begin{tikzpicture}
\node[vertex,label=right:{\small $y_1$}] at (0,4.5) (y1) {};
\node[vertex,label=right:{\small $y_2$}] at (0,4) (y2) {};
\node[vertex,label=right:{\small $y_3$}] at (0,3.5) (y3) {};
\node[vertex,label=right:{\small $y_4$}] at (0,3) (y4) {};
\node[vertex,label=right:{\small $x_1$}] at (0,2.5) (x1) {};
\node[vertex,label=right:{\small $y_5$}] at (0,2) (y5) {};
\node[vertex,label=right:{\small $x_2$}] at (0,1.5) (x2) {};
\node[vertex,label=right:{\small $y_6$}] at (0,1) (y6) {};
\draw (y1)--(y2)--(y3)--(y4)--(x1)--(y5)--(x2)--(y6);
\node at (0,0.3) (T) {$T$};
\end{tikzpicture}
\qquad
\begin{tikzpicture}
\node[vertex,label=right:{\small $y_6$}] at (0,4.5) (y6) {};
\node[vertex,label=right:{\small $y_5$}] at (0,4) (y5) {};
\node[vertex,label=right:{\small $y_4$}] at (0,3.5) (y4) {};
\node[vertex,label=right:{\small $y_3$}] at (0,3) (y3) {};
\node[vertex,label=right:{\small $y_2$}] at (0,2.5) (y2) {};
\node[vertex,label=right:{\small $x_2$}] at (0,2) (x2) {};
\node[vertex,label=right:{\small $x_1$}] at (0,1.5) (x1) {};
\node[vertex,label=right:{\small $y_1$}] at (0,1) (y1) {};
\draw (y1)--(y2)--(y3)--(y4)--(x1)--(y5)--(x2)--(y6);
\node at (0,0.3) (T') {$T'$};
\end{tikzpicture}
    \caption{Two search trees on $\SPK_{2,6}$.}
    \label{fig:searchTreesOnSPK26}
\end{figure}

Recall that $\dist_{\R(\SPK_{2,6})}(T,T')\leq \diam(\R(\SPK_{2,6})) = 20$.

We show that $\dist_{\R(\SPK_{2,6})}(T,T')\geq 20$. 
Consider a path in $\R(\SPK_{2,6})$ from $T$ to $T'$. Let $L\subseteq Q$ the set of vertices of $Q$ that are leaves in some tree of the path. Let $k=|L|$. In this path the rotations in Table \ref{table_rotations_6} are necessary. 

\begin{table}
\centering
\begin{tabular}{ |c|c| }
\hline
No. of rotations &  Description \\
\hline
1 & exchange $x_1x_2$\\
\hline
$\alpha+ 2(k-2)$ & leaves go down, $\alpha=\begin{cases}
        1\quad \text{if }y_5\in L \\
        2\quad \text{if }y_5\notin L
        \end{cases}$ \\
\hline
$\binom{6-k}{2}$ & sort vertices of $Q$ within the handle \\
\hline
$k(6-k)$ & leaves non-leaves rotations \\
\hline
$2(k-1)$ & leaves go up \\
\hline
$\beta$ &  $\beta=\begin{cases}
        0\quad \text{if }y_5\in L \\
        1\quad \text{if }y_5\notin L  \quad (y_5x_1) 
        \end{cases}$\\
\hline
\end{tabular}
\caption{Necessary rotations in a sequence that transforms $T$ into $T'$.}
\label{table_rotations_6}
\end{table}

Then, the length of this path is at least
\[
1+\gamma+2(k-2)+\binom{6-k}{2}+k(6-k)+2(k-1),
\]
where $\gamma=\alpha+\beta=\begin{cases}
        1\quad \text{if }y_5\in L \\
        3\quad \text{if }y_5\notin L.
        \end{cases}$

It is not hard to see that this the minimum value for this expression with $2\leq k\leq 6$ and $\gamma\in\{1,3\}$ is 20 (note that if $k=2$, $y_5\notin L$). It follows that $\dist_{\R(\SPK_{2,6})}(T,T')\geq 20$, and therefore $\dist_{\R(\SPK_{2,6})}(T,T') = 20$. 

Similarly to case $q=3$, we see that if $y_6$ is a leaf again after the occurrence of a $P$-special edge (or $y_1$ is a leaf before), this requires two additional rotations (with $x_1$ and $x_2$), apart from the at least 20 rotations counted in Table \ref{table_rotations_6}.

If a $TT'$-path has a $P$-special edge and the vertices $y_1$ and $y_6$ are simultaneously leaves in some tree of the sequence, then it must be the case that $y_6$ is a leaf again after the special edge or that $y_1$ is a leaf before the edge, making this path not optimal.  If $y_1$ and $y_6$ are never leaves simultaneously, then there must be a rotation between them, since they have different relative order in $T$ and $T'$, that is, there is an additional rotation, apart from those counted in Table \ref{table_rotations_6}. This makes the path again non optimal. Hence, no $TT'$-path of minimum length in $\R(\SPK_{2,6})$ has a special edge.

It follows that $\diam(\R(K_{2,6})) = \diam(\R(\SPK_{2,6}))=20$.
\bigskip

Finally, we prove that $\diam(\R(K_{2,7}))=25$.

Consider the search trees $T,T'$ on $\SPK_{2,7}$ shown in Figure $\ref{fig:searchTreesOnSPK27}$. 

\begin{figure}
\centering
\begin{tikzpicture}
\node[vertex,label=right:{\small $y_7$}] at (0,5) (y7) {};
\node[vertex,label=right:{\small $y_6$}] at (0,4.5) (y6) {};
\node[vertex,label=right:{\small $y_5$}] at (0,4) (y5) {};
\node[vertex,label=right:{\small $y_4$}] at (0,3.5) (y4) {};
\node[vertex,label=right:{\small $y_3$}] at (0,3) (y3) {};
\node[vertex,label=right:{\small $x_2$}] at (0,2.5) (x2) {};
\node[vertex,label=right:{\small $y_2$}] at (0,2) (y2) {};
\node[vertex,label=right:{\small $x_1$}] at (0,1.5) (x1) {};
\node[vertex,label=right:{\small $y_1$}] at (0,1) (y1) {};
\draw (y1)--(x1)--(y2)--(x2)--(y3)--(y4)--(y5)--(y6)--(y7);
\node at (0,0.3) (T) {$T$};
\end{tikzpicture}
\qquad
\begin{tikzpicture}
\node[vertex,label=right:{\small $y_1$}] at (0,4.5) (y1) {};
\node[vertex,label=right:{\small $y_2$}] at (0,4) (y2) {};
\node[vertex,label=right:{\small $y_3$}] at (0,3.5) (y3) {};
\node[vertex,label=right:{\small $y_4$}] at (0,3) (y4) {};
\node[vertex,label=right:{\small $y_5$}] at (0,2.5) (y5) {};
\node[vertex,label=right:{\small $y_6$}] at (0,2) (y6) {};
\node[vertex,label=right:{\small $x_1$}] at (0,1.5) (x1) {};
\node[vertex,label=right:{\small $y_7$}] at (0,1) (y7) {};
\node[vertex,label=right:{\small $x_2$}] at (0,0.5) (x2) {};
\draw (y1)--(x1)--(y2)--(x2)--(y3)--(y4)--(y5)--(y6)--(y7);
\node at (0,-0.2) (T') {$T'$};
\end{tikzpicture}
    \caption{Two search trees on $\SPK_{2,6}$.}
    \label{fig:searchTreesOnSPK27}
\end{figure}

Recall that $\dist_{\R(\SPK_{2,7})}(T,T')\leq \diam(\R(\SPK_{2,7}))= 25$.

We prove that $\dist_{\R(\SPK_{2,7})}(T,T')\geq 25$. Consider a path from $T$ to $T'$ in $\R(\SPK_{2,7})$.

Let us consider first the case where $y_1$ is the only vertex of $Q$ that is a leaf in some tree of this sequence. Such a path has at least the following rotations. One $x_1x_2$-rotation, $\binom{7}{2}$ rotations between pairs of vertices in $Q$, two rotations of $y_1$ with both vertices in $P$, one of $y_2$ with $x_2$ (that makes $y_2$ go up) and one of $y_7$ with $x_1$ (where $y_7$ goes down). This gives a total of 26 rotations. Note that in this case, the path is not of minimum length.

Suppose now that $L\subseteq Q$ is the set of vertices in $Q$ that are leaves in some tree of this path, with $k=|L|\geq 2$. Then, in such a path the rotations in Table \ref{table_rotations_7} are necessary. 

\begin{table}
    \centering
\begin{tabular}{ |c|l| }
\hline
No. of rotations &  \multicolumn{1}{|c|}{Description}\\
\hline
1 &  exchange $x_1x_2$ \\
\hline
$\alpha+ 2(k-2)$ & leaves go down, $\alpha=\begin{cases}
        1\quad \text{if }y_2\in L \\
        2\quad \text{if }y_2\notin L
        \end{cases}$ \\
\hline
$\delta$ & sort vertices of $Q$ within handle, $\delta=\begin{cases}
\binom{7-k}{2}, \quad k\leq 5\\
0, \quad k=6,7
\end{cases}$ \\
\hline
$k(7-k)$ & leaves non-leaves rotations\\
\hline
$\beta+2(k-1)$ & leaves go up, $\beta=\begin{cases}
        1\quad \text{if }y_7\in L \\
        2\quad \text{if }y_7\notin L 
        \end{cases}$ \\
\hline
\end{tabular}
    \caption{Necessary rotations in a sequence that transforms $T$ into $T'$.}
    \label{table_rotations_7}
\end{table}

Hence, this path has length at least
\[
1+\gamma+2(k-2)+\delta+k(7-k)+2(k-1),
\]
where $\gamma=\alpha+\beta=\begin{cases}
        2\quad \text{if }y_2,y_7\in L \\
        3\quad \text{if } (y_2\in L, y_7\notin L) \lor (y_2\notin L, y_7\in L)\\
        4\quad \text{if }y_2,y_7\notin L.
        \end{cases}$
\medskip

It is not hard to see that the minimum value for this expression with $2\leq k\leq 7$ and $\gamma\in\{2,3,4\}$ is 26 (note that if $k=2$, $y_2\notin L$ or $y_7\notin L$). It follows that $\dist_{\R(\SPK_{2,7})}(T,T')\geq 25$, and therefore $\dist_{\R(\SPK_{2,7})}(T,T') = 25$.

According to the previous paragraphs, in a minimum length path $L=Q$. In particular $y_2,y_7\in L$.

Observe that in Table \ref{table_rotations_7} only 3 rotations between $y_2$ and vertices in $P$ are counted (one when $y_2$ goes down to be a leaf and 2 when it goes up). If there were a $P$-special edge in the path, and if $y_2$ were a leaf after this edge, 5 additional rotations would be needed, between $y_2$ and vertices of $P$ (1 when $x_2$ goes down, 2 when $y_2$ goes down to be a leaf, 2 it goes back up again). This makes the path of length at least 27, and therefore not optimal. Hence, $y_2$ must be a leaf exclusively before the special edge. Analogously, $y_7$ must be a leaf exclusively after the special edge. Thus, $y_2$ and $y_7$ are not leaves simultaneously in any tree of the path. This means that there must be at least one rotation between them, since they have different relative order in $T$ and $T'$, adding one rotation to those already counted in Table \ref{table_rotations_7}. Therefore, a path with a $P$-special edge is not optimal.

It follows that $\diam(\R(K_{2,7})) = \diam(\R(\SPK_{2,7}))=25$.

\section{Further remarks and future work}

The graph operations introduced in this article provide a method for generating several families of graphs. As a result, the findings presented here offer tools for investigating the structural and combinatorial properties of their corresponding rotation graphs, as well as key graph parameters such as chromatic number and diameter

From the results concerning chromatic numbers, interesting problems arise:
\begin{itemize}
    \item We observed that a rotation graph has chromatic number 2 if and only if it is a permutohedron. Since the chromatic number of rotation graphs for threshold graphs and complete bipartite graphs is 3, it would be interesting to characterize all rotation graphs with chromatic number 3. In particular, do rotation graphs of trivially perfect graphs also have chromatic number 3?
    \item It is well-established that trivially perfect graphs can be constructed from a single-vertex graph through the operations of disjoint union of trivially perfect graphs and addition of a vertex whose open neighborhood forms a trivially perfect graph. Therefore, a compelling approach to addressing the previous question is to determine whether adding a universal vertex to a graph preserves the chromatic number of its corresponding rotation graph.
    \item We have established that under certain conditions, adding a vertex to a graph preserves the chromatic number of the corresponding rotation graph. For example, adding a pendant vertex to a universal vertex $v$ preserves the chromatic number, though this might not be the case if $v$ is not universal. As noted in \cite{ARSW-2018}, $\chi(\R(P_{10}))\geq 4$, where $P_{10}$ is the path on 10 vertices. A natural next step is to identify which operations on a graph lead to an increase in the chromatic number of its corresponding rotation graph.
\end{itemize}

Further questions on distances and diameter of rotation graphs emerge:
\begin{itemize}
    \item An appealing direction for future work would be to derive bounds for distances, similar to those established in Theorem \ref{theo_lower_bound_quotient}, where analyzing the structure of modified rotation graphs enables bounding the distances between pairs of vertices. Ultimately, determining precise distances in rotation graphs generated through the described graph operations could provide deeper insights into their structural properties, and the computation of their diameters.
    
    \item Let $G$ be a graph with a subset $W$ of true twins and $S=E(G[W])$. 
    By Theorem \ref{theo_lower_bound_quotient} we have that $0\leq\diam(\R(G)) - \diam(\R(G-S)) \leq \binom{|W|}{2}$. As we saw for the case of $K_{2,q}$ and $\SPK_{2,q}$, when $|W|=2$ both possible values for $\diam(\R(G)) - \diam(\R(G-S))$ are achieved. Is this the case for every size of $W$? In other words, given $j$ and $0\leq k\leq \binom{j}{2}$, is there a graph $G$ with a subset $W$ of true twins such that $|W|=j$ and that $\diam(\R(G)) - \diam(\R(G-S))=k$?
\end{itemize}


\printbibliography

\end{document}